\newtheorem{thm}{Theorem}[section]
\newtheorem{lem}[thm]{Lemma}
\newtheorem{cor}[thm]{Corollary}
\newtheorem*{thm*}{Theorem}
\theoremstyle{definition}
\newtheorem{defn}[thm]{Definition}
\newtheorem{example}[thm]{Example}
\newtheorem{question}[thm]{Question}
\theoremstyle{remark}
\newcommand{\system}[1]{\mbox{\fontfamily{cmss}\fontshape{n}\fontseries{m}%
    \selectfont#1}}
\newcommand{\RCA}{\system{RCA}\ensuremath{_0}}
\newcommand{\RT}{\system{RT}\ensuremath{^2_2}}
\newcommand{\SRT}{\system{SRT}\ensuremath{^2_2}}
\newcommand{\ADS}{\system{ADS}}
\newcommand{\CAC}{\system{CAC}}
\newcommand{\SCAC}{\system{SCAC}}
\newcommand{\COH}{\system{COH}}
\newcommand{\EM}{\system{EM}}
\newcommand{\OPT}{\system{OPT}}
\newcommand{\SADS}{\system{SADS}}
\newcommand{\SEM}{\system{SEM}}
\title{Separating principles below Ramsey's Theorem for Pairs}
\author{Manuel Lerman, Reed Solomon, Henry Towsner\thanks{Henry Towsner was partially supported by NSF grant DMS-1157580.}}
\begin{document}

\maketitle

\section{Introduction}
\label{sec:intro}

In recent years, there has been a substantial amount of work in reverse mathematics concerning natural mathematical principles that are 
provable from $\RT$, Ramsey's Theorem for Pairs.  These principles tend to fall outside of the ``big five" systems of reverse mathematics and 
a complicated picture of subsystems below $\RT$ has emerged.  In this paper, we answer two open questions concerning these subsystems, specifically that 
$\ADS$ is not equivalent to $\CAC$ and that $\EM$ is not equivalent to $\RT$.  

We begin with a review of the definitions and known results for the 
relevant systems below $\RT$, but will assume a general familiarity with reverse mathematics.  We refer the reader to Simpson \cite{Sim99} 
for background on reverse mathematics and to Hirschfeldt and Shore \cite{Hir07} for background on the general picture of subsystems below $\RT$.  
Unless otherwise specified, we always work in the base theory $\RCA$.  

We will have orderings on a variety of structures, but we typically reserve the symbols $<$ and $\leq$ for three contexts:  the usual order on $\mathbb{N}$, 
extensions of forcing conditions and comparing sets.  If $F$ is a finite set and $G$ is a (finite or infinite) set, we write $F < G$ to denote 
$\text{max}(F) < \text{min}(G)$.  Without loss of generality, we assume that the infinite algebraic structures defined below have domain $\mathbb{N}$.  

\begin{defn}
A \emph{2-coloring} of $[\mathbb{N}]^2$ (or simply a \emph{coloring}), where $[\mathbb{N}]^2$ denotes the set of all two element subsets of 
$\mathbb{N}$, 
is a function $c:[\mathbb{N}]^2 \rightarrow \{ 0,1 \}$.   A set $H \subseteq \mathbb{N}$ is \emph{homogeneous} for $c$ if $c$ is constant on $[H]^2$.    
\end{defn}

(\RT) Ramsey's Theorem for Pairs: Every 2-coloring of $[\mathbb{N}]^2$ has an infinite homogeneous set.   

\medskip
We refer to an infinite homogeneous set for a coloring $c$ as a \textit{solution} to $c$.  We typically write $c(x,y)$, as opposed to 
$c(\{x,y\})$, with implicit assumption that $x < y$.  

\begin{defn}
Let $c$ be a 2-coloring of $[\mathbb{N}]^2$.  Define $A^*(c)$ (respectively $B^*(c)$) to be the set of numbers which are colored $0$ (respectively $1$) 
with all but finitely many other numbers.   
\begin{eqnarray*}
A^*(c) & = & \{ n \mid \exists x \, \forall y > x \, (c(n,y) = 0) \} \\
B^*(c) & = & \{ n \mid \exists x \, \forall y > x \, (c(n,y) = 1) \}
\end{eqnarray*}
The coloring $c$ is \emph{stable} if $A^*(c) \cup B^*(c) = \mathbb{N}$.  
\end{defn}

(\SRT) Stable Ramsey's Theorem for Pairs:  Every stable 2-coloring of $[\mathbb{N}]^2$ has an infinite homogeneous set.

\medskip
Chong, Slaman and Yang \cite{Cho:TA} have recently shown that $\SRT$ is strictly weaker than $\RT$.

\begin{defn}
Let $M = (\mathbb{N}, \preceq_M)$ be a poset.  For $x,y \in M$, we say that $x$ and $y$ are \emph{comparable} if either 
$x \preceq_M y$ or $y \preceq_M x$, and we say $x$ and $y$ are \emph{incomparable} (and write $x \, |_M \, y$) if $x \not \preceq_M y$ and 
$y \not \preceq_M x$. $S \subseteq \mathbb{N}$ is a \emph{chain} in $M$ if for all $x,y \in S$, $x$ and $y$ are comparable.  $S$ is 
an \emph{antichain} in $M$ if for all $x \neq y \in S$, $x$ and $y$ are incomparable. 
\end{defn}

(\CAC) Chain-AntiChain: Every infinite poset $M$ contains either an infinite chain or an infinite antichain. 

\medskip
A \emph{solution} to an infinite poset $M$ is an infinite set $S$ such that $S$ is either a chain or an antichain.  
It is straightforward to show that $\RT \vdash \CAC$ by transforming instances of $\CAC$ into instances 
of $\RT$.  Given a partial order $M = (\mathbb{N}, \preceq_M)$, define the coloring 
$c_M$ by setting $c_M(x,y) = 0$ if $x$ and $y$ are comparable and setting $c_M(x,y) = 1$ otherwise.  If $H$ is an infinite homogeneous set for 
$c_M$ with color $0$, then $H$ is an infinite chain in $M$.  If $H$ is an infinite homogeneous set with color $1$, then $H$ is an infinite antichain in $M$.  

Hirschfeldt and Shore \cite{Hir07} showed that one cannot give a similar transformation of instances of $\RT$ into instances of $\CAC$ 
by showing that $\CAC \not \vdash \SRT$.  

\begin{defn}
Let $M = (\mathbb{N}, \preceq_M)$ be an infinite partial order.  Define 
\begin{eqnarray*}
A^*(M) & = & \{ n \mid \exists x \, \forall y > x \, (n \preceq_M y) \} \\
B^*(M) & = & \{ n \mid \exists x \, \forall y > x \, (n \, |_M \, y ) \} \\
C^*(M) & = & \{ n \mid \exists x \, \forall y > x \, (y \preceq_M n) \} 
\end{eqnarray*}
$M$ is \emph{stable} if either $A^*(M) \cup B^*(M) = \mathbb{N}$ or $C^*(M) \cup B^*(M) = \mathbb{N}$.  
\end{defn}

(\SCAC) Stable Chain-Antichain: Every infinite stable poset $M$ contains either an infinite chain or an infinite anti chain.

\medskip
When we work with $\SCAC$ later, we will construct an infinite poset $M$ such that $A^*(M) \cup B^*(M) = \mathbb{N}$.  Thus, our notations for 
$A^*(M)$ and $B^*(M)$ are chosen to parallel the corresponding notations for $\SRT$.  Although $\SRT \vdash \SCAC$ by the transformation given above, 
Hirschfeldt and Shore \cite{Hir07} showed that $\SCAC \not \vdash \CAC$.

\begin{defn}
Let $L = (\mathbb{N}, <_L)$ be an infinite linear order.  A function $f:\mathbb{N} \rightarrow L$ is an \emph{infinite ascending sequence} in $L$ 
if for all $n < m$, $f(n) <_L f(m)$ and is an \emph{infinite descending sequence} in $L$ if for all $n < m$, $f(n) >_L f(m)$.
\end{defn}

(\ADS) Ascending or Descending Sequence: Every infinite linear order $L$ has an infinite ascending sequence or an 
infinite descending sequence.  

\begin{defn}
An infinite linear order $L$ is \textit{stable} if $L$ has order type $\omega+\omega^*$.  That is, for every $x$, there is a $y$ such that either 
$\forall z > y \, (x <_L z)$ or $\forall z > y \, (z <_L x)$.
\end{defn}

(\SADS) Stable Ascending or Descending Sequence:  Every infinite stable linear order has an infinite ascending sequence or an infinite 
descending sequence.

\medskip
A \emph{solution} to an infinite linear order $L$ is a function which is either an infinite ascending sequence or an infinite descending sequence.  

As above, one can show $\CAC \vdash \ADS$ by transforming instances of $\ADS$ into instances of $\CAC$.  Given an infinite linear order 
$(\mathbb{N}, <_L)$, define an infinite partial order $M = (\mathbb{N}, \preceq_M)$ by $x \preceq_M y$ if and only if $x \leq_L y$ and $x \leq y$.  Let 
$S = \{ s_0 < s_1 < \cdots \}$ be a solution to $M$ and define $f(n) = s_n$.  If $S$ is a chain in $M$, then $f$ is an ascending chain in $L$.  If $S$ is an antichain in 
$M$, then $f$ is a descending chain in $L$. 

Hirschfeldt and Shore \cite{Hir07} proved that $\SADS \not \vdash \ADS$, but left open the question of whether $\ADS$ implies $\CAC$ or 
$\SADS$ implies $\SCAC$.   Our first result answers both of these questions in the negative by separating $\ADS$ from $\SCAC$ in an 
$\omega$-model.  

\begin{thm}
\label{thm:ADS}
There is a Turing ideal $\mathcal{I} \subseteq \mathcal{P}(\omega)$ such that the $\omega$-model $(\omega, \mathcal{I})$ satisfies 
$\ADS$ but not $\SCAC$.  Therefore, $\ADS$ does not imply $\SCAC$.
\end{thm}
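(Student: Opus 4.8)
The plan is to realize $\mathcal{I}$ as an increasing union of principal Turing ideals, $\mathcal{I} = \bigcup_s \{Z : Z \le_T X_s\}$, where $X_0$ codes a fixed computable poset $M$ that will witness the failure of $\SCAC$, and $X_{s+1} = X_s \oplus G_s$ with $G_s$ an $\ADS$-solution to a linear order handled at stage $s$. Any such $\mathcal{I}$ is a Turing ideal, so $(\omega,\mathcal{I})\models\RCA$. Using a standard bookkeeping over all (functional index, stage) pairs which, at stage $s$, picks up the least not-yet-handled index naming an infinite linear order $L_s$ computable from some $X_t$ with $t\le s$, we guarantee that every infinite linear order appearing in $\mathcal{I}$ eventually receives a solution in $\mathcal{I}$, so $(\omega,\mathcal{I})\models\ADS$. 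The set $M$ will be a computable \emph{stable} poset, i.e.\ $A^*(M)\cup B^*(M)=\omega$, hence a legitimate instance of $\SCAC$; since $M\in\mathcal{I}$, the model fails $\SCAC$ provided no $X_s$ computes an infinite chain or infinite antichain of $M$. Maintaining this last condition through the iteration is the whole content of the proof.

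The engine is a preservation lemma for $\ADS$: one isolates a property $\Gamma(X)$ of oracles such that (a) $\Gamma(M)$ holds, (b) $\Gamma$ descends under $\le_T$ and is closed under the joins produced by the construction, (c) $\Gamma(X)$ implies $X$ computes no solution to $M$, and (d) if $\Gamma(X)$ holds and $L$ is an infinite $X$-computable linear order, then $L$ has a solution $G$ with $\Gamma(X\oplus G)$. Granting this, an easy induction gives $\Gamma(X_s)$ for all $s$, so no $X_s$ computes a solution to $M$ and we are done. For (d) I would force $G$ with a Mathias-style notion of forcing whose conditions are a finite stem --- an initial piece of the sequence --- together with an infinite $X$-computable reservoir, arranging by genericity that (i) $G$ is infinite and is an ascending or a descending sequence in $L$, and (ii) for every $e$, $\Phi_e^{X\oplus G}$ is not a solution to $M$; the latter by the usual dichotomy, namely either some extension forces $\Phi_e^{X\oplus G}$ to be finite, or, by pushing reservoir elements into $\Phi_e^{X\oplus G}$, we expose two $M$-incomparable elements and two $M$-comparable elements in it, so it is neither a chain nor an antichain. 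The step where $\Gamma$ does real work --- and where the choice of $M$ matters --- is showing the second alternative is always available: if it failed, an $X$-computable search through conditions would assemble an infinite chain (or antichain) of $M$, contradicting (c). To run this against \emph{all} functionals at once and to obtain the stronger conclusion $\Gamma(X\oplus G)$ rather than merely ``$X\oplus G$ computes no solution,'' I expect $\Gamma$ must be something more robust than the bare non-computability statement --- for instance a hyperimmunity/non-domination condition tying $A^*(M)$ or $B^*(M)$ to an $X$-hyperimmune function, together with the fact that $\ADS$ preserves such hyperimmunities --- and $M$ will have to be \emph{built} (by a finite-injury or forcing construction) precisely so that this robust $\Gamma$ is self-sustaining.

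The main obstacle lies in clause (d) and has two layers. First, $\ADS$ quantifies over \emph{arbitrary} linear orders, not just stable ones, so the forcing must handle an $L$ of unrestricted order type: one splits on whether $L$ has an $X$-computable infinite ascending sequence (then take it, after thinning to preserve $M$) and otherwise works relative to the $X$-computable approximations to the ``$\omega$-part'' and ``$\omega^*$-part'' of $L$; but the decision of which kind of sequence $G$ will be, and the matching shape of the reservoir, must be allowed to be revised as preservation requirements are met, which forces a more elaborate condition notion and careful control of injury. Second, and more seriously, the construction of $M$ and the definition of $\Gamma$ are entangled: $M$ must defeat every solution that any set the iteration can ever produce, yet those sets are $\ADS$-generics built over oracles that already encode $M$, so one is really running a single master construction in which the diagonalization building $M$, the choice of $\Gamma$, and the iterated $\ADS$-forcing are interleaved so that the recursion closes. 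Getting that interleaving to converge --- in particular proving that clause (d) returns a set satisfying $\Gamma$, uniformly enough to feed back into the bookkeeping --- is where the substance of the argument lies.
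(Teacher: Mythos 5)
Your high-level plan---realize $\mathcal{I}$ as a union of principal cones, isolate a preservation property $\Gamma$ closed under the $\ADS$-forcing and incompatible with computing a solution to $M$, and build $M$ so that $\Gamma$ is self-sustaining---is exactly the skeleton of the paper's argument, and your warning about where the difficulty lies is accurate. But the two concrete proposals you float to fill that skeleton diverge from what is actually done, and neither is supported by an argument.

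The candidate you suggest for $\Gamma$, a hyperimmunity or non-domination condition tied to $A^*(M)$ or $B^*(M)$, is precisely the kind of a priori degree-theoretic invariant that the paper argues is \emph{not} available here; this is the content of the open question posed in the Conclusion, where the authors note that, unlike all other known non-implications in this neighborhood, the present one does not seem to come with an a priori definable class $\mathcal{S}$ of degrees doing the preservation work. There is no known proof that $\ADS$-generics preserve bi-immunity or bi-hyperimmunity of the pair $(A^*(M),B^*(M))$, and if one had such a lemma the theorem would be routine. What the paper actually uses as $\Gamma$ is built from the forcing itself: a ``uniform density'' condition on a family of $\Sigma^{0,X\oplus A^*(M)\oplus B^*(M)}_1$ requirements $\mathcal{K}^{X,A^*(M),B^*(M)}$, saying roughly that whenever a requirement is ``essential'' below a condition (there are many ways to force large numbers into a potential solution), one can extend so as to force numbers from both $A^*(M)$ and $B^*(M)$ in. The ground forcing must force this condition at level one (property (C3)) and the iteration forcing must re-derive it for $X\oplus G$ (Lemma~\ref{lem:uniformdense}); that bootstrapping is the technical substance, and it is wholly absent from the sketch.

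The mechanism for handling arbitrary linear orders is also missing a key device. The paper reduces to ``stable-ish'' $L$ (an initial segment $V$ with no maximum whose complement has no minimum), since otherwise an $L$-computable solution exists, and then confronts the fact that $V$ is not $X$-computable. The resolution is to force with pairs $(\sigma,\tau)$ of a finite ascending and descending sequence with $\sigma\prec\tau$, meeting requirements via split pairs $q_0=(\sigma^\frown\sigma',\tau)$, $q_1=(\sigma,\tau^\frown\tau')$ with $\sigma'\prec\tau'$; Lemma~\ref{split} guarantees one of $q_0,q_1$ respects $V$. Without this one cannot search $X$-computably and still land on a genuine solution. Finally, the paper does not interleave the construction of $M$ with the iterated forcing as you propose: the ground forcing runs to completion first, and the ability to decouple the two stages is exactly what the uniform density condition buys. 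Your sketch correctly identifies where the recursion must close, but the idea that closes it is not there.
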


This theorem will be proved in Section \ref{sec:ADS}.  Our second result concerns infinite tournaments and the Erd\"{o}s-Moser Theorem.  

\begin{defn}
A \emph{tournament} $T$ on a domain $D \subseteq \mathbb{N}$ is an irreflexive binary relation on $D$ such that for all $x \neq y \in D$, exactly 
one of $T(x,y)$ or $T(y,x)$ holds.  $T$ is \emph{transitive} if for all $x,y,z \in D$, if $T(x,y)$ and $T(y,z)$ hold, then 
$T(x,z)$ holds.   
\end{defn}

In keeping with our terminology above, an \textit{infinite tournament} refers to a tournament $T$ with domain $\mathbb{N}$.  An 
\textit{infinite transitive subtournament of} $T$ (or a \textit{solution} to $T$) is an infinite set $S \subseteq \mathbb{N}$ such that $T$ restricted to domain $S$ 
is transitive.  The Erd\"{o}s-Moser Principle states that such solutions always exist.

\medskip
$(\EM)$ Erd\"{o}s-Moser Principle: Every infinite tournament contains an infinite transitive subtournament.  
\medskip

$\EM$ follows from $\RT$ by transforming instances of $\EM$ into instances of $\RT$.  Let $T$ be an 
infinite tournament and define the coloring $c_T$ for $x < y$ by $c_T(x,y) = 0$ if $T(x,y)$ holds and $c_T(x,y) = 1$ if $T(y,x)$ holds.  Suppose $H$ 
is an infinite homogeneous set for the color $0$.  Then, $H$ is transitive in $T$ because for all $x \neq y \in H$, $T(x,y)$ holds if and 
only if $x < y$.  Similarly, if $H$ is homogeneous for the color $1$, then $H$ is transitive in $T$ because for all $x \neq y \in H$, $T(x,y)$ holds 
if and only if $x > y$.  

Since computable instances of $\RT$ have $\Pi^0_2$ solutions and have $\text{low}_2$ solutions, it follows from this 
translation that computable instances of $\EM$ also have $\Pi^0_2$ solutions and have $\text{low}_2$ solutions.  
In Section \ref{sec:Erdos}, we present a proof due to Kach, Lerman, Solomon and Weber that these bounds are best possible.  

\begin{thm}[Kach, Lerman, Solomon and Weber]
\label{thm:KLSW}
There is a computable instance of $\EM$ with no $\Delta^0_2$ solution, and hence no $\Sigma^0_2$ solution or low solution.  
\end{thm}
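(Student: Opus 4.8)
The goal is to build a computable tournament $T$ such that no infinite transitive subtournament is $\Delta^0_2$. I would carry this out by a forcing/priority construction that diagonalizes against every potential $\Delta^0_2$ solution. Recall that a set is $\Delta^0_2$ iff it is computable from $\emptyset'$, and by the limit lemma the candidate solutions can be presented uniformly as $\Phi_e^{\emptyset'}$ for $e \in \omega$; equivalently, one works with $\emptyset'$-computable approximations. So the requirements are
\[
\mathcal{R}_e : \text{if } \Phi_e^{\emptyset'} \text{ is total and its range is an infinite set, then } \Phi_e^{\emptyset'} \text{ is not transitive in } T.
\]
The plan is to construct $T$ in stages, at each stage having committed only finitely much of the tournament relation on an initial segment of $\omega$, and to meet the $\mathcal{R}_e$ in order of priority.

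The key mechanism for a single requirement $\mathcal{R}_e$ is the standard fact that a tournament on a finite set can fail to be transitive by containing a $3$-cycle: if we can find (or force) three vertices $a,b,c$ that $\Phi_e^{\emptyset'}$ enumerates into its purported solution, and we still have freedom to orient the edges among $a,b,c$, then we set $T(a,b)$, $T(b,c)$, $T(c,a)$ and $\mathcal{R}_e$ is satisfied permanently. The subtlety is that we are building $T$ computably while the solution $\Phi_e^{\emptyset'}$ depends on the noncomputable oracle $\emptyset'$. This is handled by the usual trick: we do not know $\emptyset'$, but we run a computable approximation; when the approximation currently certifies that $\Phi_e$ (relative to the stage-$s$ approximation of $\emptyset'$) has put three new large numbers $a < b < c$ into its range, we declare these a \emph{candidate triple} for $\mathcal{R}_e$ and freeze the three edges among them in a $3$-cycle. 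If the approximation later changes so that this evidence is withdrawn, we must be able to injure the commitment and try again; finiteness of the true $\emptyset'$-use on each convergent computation guarantees that the approximation eventually stabilizes, so only finitely many candidate triples are ever tried for each $e$, and the construction settles. Lower-priority requirements are initialized whenever a higher-priority one acts, in the usual finite-injury fashion.

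The main obstacle, and the reason this is more than a routine finite-injury argument, is reconciling the \emph{tournament} constraint with the diagonalization. Unlike coloring $[\mathbb{N}]^2$, where each pair can be colored freely and independently, here once we have built $T$ on $\{0,\dots,n\}$, every new vertex $n+1$ must be connected to \emph{all} previous vertices, and those choices can inadvertently create transitive configurations that a clever $\Phi_e^{\emptyset'}$ could exploit, or can conflict with edge-orientations frozen by several requirements at once. The resolution is to be careful about the geometry: when $\mathcal{R}_e$ freezes a $3$-cycle on $\{a,b,c\}$, it only constrains those three edges; all edges from later vertices into $\{a,b,c\}$, and all edges among vertices not claimed by any current requirement, are still free. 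One orients the ``free'' edges in a default way (say, $T(x,y)$ iff $x<y$, i.e., extending transitively) so that the only nontransitivity deliberately introduced is via the finitely many frozen $3$-cycles, and one checks that distinct requirements can always choose their candidate triples from fresh large numbers, so their frozen triples are pairwise disjoint and never overlap. The verification then amounts to: (i) each $\mathcal{R}_e$ acts only finitely often and is eventually satisfied, by finite injury plus stabilization of the $\emptyset'$-approximation; (ii) $T$ is a well-defined computable infinite tournament, since at each stage we extend the orientation on a larger initial segment and never revisit an edge once it is permanently frozen or defaulted; and (iii) consequently no $\Delta^0_2$ set can be an infinite transitive subtournament of $T$, so in particular $T$ has no $\Sigma^0_2$ or low solution. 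The last two corollaries are immediate since low sets and $\Sigma^0_2$ sets that are solutions would in either case yield $\Delta^0_2$ solutions (a low set is $\Delta^0_2$; and an infinite $\Sigma^0_2$ set has an infinite $\Delta^0_2$ subset, which is still a transitive subtournament).
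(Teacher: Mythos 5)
The central mechanism in your proposal differs from the paper's in a way that opens a genuine gap. You propose to wait until the approximation certifies three numbers $a<b<c$ as elements of the purported solution and then ``freeze the three edges among them in a $3$-cycle.'' But for $T$ to be computable, the edge $T(a,b)$ must be decided at some computable point (for instance, stage $\max(a,b)$), and in general the approximation to $\Phi_e^{\emptyset'}$ will not jointly certify three numbers until long after the edges among them are already committed. Nothing in your sketch forces the approximation to hand you three ``fresh'' numbers with all three mutual edges still undecided; it may very well enumerate small elements one at a time, or repeatedly oscillate on them, so that by the time a triple is confirmed the edges among them were fixed thousands of stages earlier. ``Choose candidate triples from fresh large numbers'' is not an option you control: the candidate numbers are dictated by the (adversarial) $\Delta^0_2$ approximation.

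The paper avoids this entirely by never needing to revisit a decided edge. At stage $s$ the construction decides only the new edges from $s$ to each $x<s$. For each $e$ it looks at the $2e+2$ least elements $x_0<\cdots<x_{2e+1}$ currently claimed by the approximation $\varphi_e$, picks a pair $x_i,x_j$ not already claimed by higher-priority requirements at this stage, and orients $T(s,x_i)$ and $T(x_j,s)$ so that $\{x_i,x_j,s\}$ is a $3$-cycle. The vertex $s$ is always fresh, so those two edges are always free; the edges between $x_i$ and $x_j$ are taken as already given. The verification is then also subtler than a direct ``$D_e$ contains a $3$-cycle'' statement: once the approximation stabilizes on the $2e+2$ least elements of $D_e$, every later $t$ forms a cycle with some pair among them, so that finite set has only finitely many one-point transitive extensions, and Lemma~\ref{lem:onepoint} says it cannot be contained in any infinite transitive subtournament --- in particular not in $D_e$. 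The $2e+2$ (rather than $2$) is exactly to survive interference from the $e$ higher-priority requirements, which may have reserved up to $2e$ of those elements at the current stage. Your plan is missing both the ``use $s$ as the fresh third vertex'' idea and the appeal to the one-point-extension lemma; as written, the diagonalization step is not guaranteed ever to act. The reduction from $\Sigma^0_2$ and low solutions to $\Delta^0_2$ solutions at the end of your note is correct and matches the paper.
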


Similar techniques were used by Dzhafarov, Kach, Lerman and Solomon to diagonalize against the existence of hyperimmune-free solutions.

\begin{thm}[Dzhafarov, Kach, Lerman and Solomon]
\label{thm:DKLS}
There is a computable instance of $\EM$ with no hyperimmune-free solution.
\end{thm}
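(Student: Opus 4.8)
The plan is to construct a computable infinite tournament $T$ as an increasing union of finite tournaments on initial segments of $\mathbb{N}$, arranging that every infinite transitive subtournament $S$ of $T$ computes a function that is not dominated by any computable function; since a degree is hyperimmune-free precisely when every function it computes is dominated by a computable function, this says no solution is of hyperimmune-free degree. Because only finitely much of $T$ is decided at each stage, the final $T$ is a genuine infinite tournament, and the Erd\H{o}s--Moser theorem (applied in $V$) guarantees it has infinite transitive subtournaments at all; the work is entirely in controlling all of them at once. I would meet, for each $e$, the requirement that if $\Phi_e$ is total then every infinite transitive subtournament of $T$ computes some function not dominated by $\Phi_e$.

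The first thing one tries is to make solutions so sparse that their principal functions $p_S\colon n\mapsto(\text{the $n$th element of }S)$ escape every computable function; as $p_S\le_T S$, that would finish the proof. This cannot work by sparseness alone: every tournament on $N$ vertices contains a transitive subtournament of size at least $\lceil\log_2 N\rceil$, so extending a finite tournament out to position $N$ always ``costs'' at least $\log_2 N$ in the transitive-subtournament count, and no single computable tournament can thereby force its infinite transitive subtournaments to be sparse enough to outpace an arbitrarily fast computable function. So instead I would follow the method behind Theorem~\ref{thm:KLSW}: build $T$ from a computable sequence of ``gadget'' regions $G_0<G_1<\cdots$, where the region $G_n$ and the region separating it from $G_{n+1}$ are designed in reaction to those $\Phi_j$ ($j\le n$) that have revealed enough of themselves by the relevant stage, in such a way that any infinite transitive subtournament $S$ must, to remain infinite and transitive, pass through each $G_n$ at a recognizable entry point $a_n$, and must reach $G_{n+1}$ only via an element lying beyond $\max_{j\le n}\Phi_j(a_n)$. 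Then the increasing, $S$-computable sequence $a_0<a_1<\cdots$ satisfies $a_{n+1}>\Phi_j(a_n)\ge\Phi_j(n)$ for all $j\le n$, so $n\mapsto a_{n+1}$ is dominated by no computable function and $S$ has hyperimmune degree.

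The combinatorial engine I would set up first is an elementary criterion for extending a finite transitive subtournament. If $U$ is transitive, written in its linear order as $u_1\to u_2\to\cdots\to u_k$ (with $a\to b$ meaning $T(a,b)$), and $x\notin U$, then $U\cup\{x\}$ is transitive if and only if $\{\,i : T(x,u_i)\,\}$ is an end segment of $\{1,\dots,k\}$; equivalently, $U\cup\{x\}$ fails transitivity exactly when there are $i<j$ with $T(x,u_i)$ and $T(u_j,x)$, which makes $\{u_i,u_j,x\}$ a $3$-cycle. Hence a finite transitive subtournament $U$ is a dead end --- not extendible to any infinite transitive subtournament of $T$ --- as soon as every sufficiently late vertex $x$ is committed to satisfy $T(x,u_i)$ and $T(u_j,x)$ for some one fixed edge $u_i\to u_j$ of $U$. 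Using this I would make the gadgets rigid and, in each separating region, kill exactly those transitive subtournaments that would let $S$ bridge $G_n$ to $G_{n+1}$ without paying the required cost, while leaving the intended transitive extensions alive.

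The hard part, and the reason this needs the apparatus of Theorem~\ref{thm:KLSW} rather than a short argument, is global consistency of all these dead-ending commitments: no vertex may be required both to beat and to lose to a common later vertex, and the commitments imposed while handling $\Phi_n$ constrain the structure still available when handling $\Phi_m$ for $m>n$. I expect to manage this by a priority/reservoir organization: each requirement draws on its own supply of fresh vertices, the sets of edges used to dead-end families of transitive subtournaments are chosen so that their tails and heads stay disjoint, and the placement of $G_{n+1}$ is deferred until the relevant $\Phi_j$'s have converged far enough that the separating region can be given the right length and structure. Checking that after all the dead-ending $T$ still admits infinite transitive subtournaments is handled by the Erd\H{o}s--Moser theorem as above; the genuinely delicate point --- and where I expect most of the effort to go --- is verifying that every such subtournament really is funneled through the gadgets with the growth property, after which the remaining bookkeeping is essentially that of Section~\ref{sec:Erdos}.
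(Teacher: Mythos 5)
Your high-level goal — arrange that every solution $S$ computes a function escaping all computable functions — is the right one, and your observation that bare sparseness fails (every tournament on $N$ vertices has a transitive subtournament of size $\lceil\log_2 N\rceil$) is correct and well taken. But the mechanism you propose for salvaging the idea has a gap that I don't think can be closed in the form you describe, and in any case it is quite far from the paper's proof, which takes a more targeted route.

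The gap: you want to design $T$ so that \emph{every} infinite transitive subtournament is ``funneled'' through designated gadget regions $G_0<G_1<\cdots$ at recognizable entry points, with the separating regions preventing any cheap bridge from $G_n$ to $G_{n+1}$. This is a global structural constraint on the entire class of solutions, and it is not clear it can be imposed. Note first that if $S$ is any infinite transitive subtournament, so is every infinite subset of $S$; so there are continuum many solutions and no particular vertex can be forced into all of them — any ``entry point'' has to be an $S$-dependent quantity such as $\min(S\cap G_n)$. More seriously, your own $\log_2 N$ observation works against you here: in any separating region of length $N$ there are transitive subtournaments of size about $\log_2 N$, and you have no control over which ones a solution uses, so ``kill exactly those transitive subtournaments that would let $S$ bridge $G_n$ to $G_{n+1}$ without paying the required cost'' is a requirement to dead-end an enormous, structureless family of finite transitive sets while keeping the tournament alive. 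You flag this as ``where most of the effort would go,'' but it is precisely the step whose feasibility is in doubt, and nothing in the proposal indicates how the priority/reservoir bookkeeping avoids overconstraining the tournament.

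The paper sidesteps all of this by using the strong-array characterization of hyperimmunity directly. If $S$ is an infinite solution of hyperimmune-free degree, then $p_S$ is dominated by a computable function, hence $S$ is met by some computable strong array $\{D_{\varphi_e(x)}\}_{x}$. So it suffices to meet, for each $e$, the requirement: if $\{D_{\varphi_e(x)}\}$ is a strong array, then there exist $x_0<x_1$ such that for all $y_0\in D_{\varphi_e(x_0)}$ and $y_1\in D_{\varphi_e(x_1)}$, the pair $\{y_0,y_1\}$ is not extendible in $T$. This is a \emph{finite} obligation per requirement — once $x_0$ and $x_1$ are found, you simply send all later $s$ into cycles with each such pair (declare $T(s,y_0)$ and $T(y_1,s)$ after arranging $T(y_0,y_1)$) — and it composes cleanly under a standard finite-injury priority argument, since competing requirements only need disjoint boxes. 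There is no need to control where solutions go, only to poison a finite set of pairs; the same elementary ``three-element cycle kills extendibility'' criterion you isolate is the engine, but pointed at named finite targets rather than at the whole space of solutions. I'd suggest reworking the proposal around strong arrays in this way; the gadget picture, as stated, does not yet constitute a construction.
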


Formalizing Theorem \ref{thm:DKLS} in reverse mathematics, which can be done in $\RCA + B\Sigma^0_2$, gives a lower bound on the strength of 
$\EM$.  Hirschfeldt, Shore and Slaman \cite{Hir09} proved that the following version of the Omitting Types Theorem, denoted $\OPT$, is equivalent to 
the statement that for every $X$, there is a function not dominated by any $X$-recursive function (i.e.~there is a degree which is hyperimmune relative to $X$).

\medskip

(\OPT) Omitting Partial Types: Let $T$ be a complete theory and $S$ be a set of partial types of $T$.  There is a 
model of $T$ that omits all the nonprincipal types in $S$. 

\medskip

Hence, $\EM$ implies $\OPT$ over $\RCA+B\Sigma^0_2$.  It remains an open question whether $\EM$ implies $B\Sigma^0_2$.

Bovykin and Weiermann \cite{BovTA} showed that one can transform an instance $c$ of $\RT$ into an instance $T_c$ of $\EM$, but that extracting the solution 
to $c$ from the solution to $T_c$ requires an application of $\ADS$.  To see why $\ADS$ might be useful, notice that if $S$ is a transitive subtournament of 
an infinite tournament $T$, then $T$ defines a linear order on $S$.  

\begin{thm}[Bovykin and Weiermann \cite{BovTA}]
\label{thm:BW}
$\EM + \ADS$ implies $\RT$.  
\end{thm}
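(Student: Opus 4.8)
The plan is to follow the hint in the text: a $2$-coloring gives rise to a tournament, $\EM$ extracts a transitive subtournament on which the tournament is itself a linear order, and $\ADS$ then extracts a homogeneous set from that linear order. Everything is carried out in $\RCA$.

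First I would fix a $2$-coloring $c$ of $[\mathbb{N}]^2$ and define an infinite tournament $T_c$: for $x<y$, let $T_c(x,y)$ hold if $c(x,y)=0$ and let $T_c(y,x)$ hold if $c(x,y)=1$ (this is the inverse of the translation used above for $\RT\vdash\EM$). Apply $\EM$ to get an infinite transitive subtournament $S$ of $T_c$. Because $S$ is transitive and $T_c$ is a tournament, the relation $x<_L y\iff T_c(x,y)$ is a linear order on $S$ (irreflexive and total by the definition of a tournament, transitive because $S$ is transitive in $T_c$). The crucial feature of $<_L$ is that it records $c$ on $S$: for $x,y\in S$ with $x<y$ in $\mathbb{N}$, $c(x,y)=0$ iff $x<_L y$, and $c(x,y)=1$ iff $y<_L x$. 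Hence any infinite $H\subseteq S$ on which $<_L$ agrees with $<$ is homogeneous for $c$ with colour $0$, and any infinite $H\subseteq S$ on which $<_L$ is the reverse of $<$ is homogeneous with colour $1$.

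Next I would apply $\ADS$ to $(S,<_L)$, identified with a linear order on $\mathbb{N}$ via the increasing enumeration of $S$, obtaining an infinite sequence $f$ that is either $<_L$-ascending or $<_L$-descending. This is not yet a homogeneous set, since $f$ need not respect the ambient order of $\mathbb{N}$, so I would thin it. Note that $f$ is injective, and for any value $v=f(k)$ there are only finitely many $n$ with $f(n)\le v$ in $\mathbb{N}$ (their $f$-values are distinct natural numbers $\le v$, so the set of such $n$ injects into $\{0,\dots,v\}$, hence is bounded in $\RCA$). Therefore one can greedily choose $i_0<i_1<\cdots$ with $f(i_0)<f(i_1)<\cdots$ in $\mathbb{N}$, the construction being justified by $\Sigma^0_1$ induction. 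If $f$ was $<_L$-ascending, then $H=\{f(i_k):k\in\mathbb{N}\}$ has $<_L$ agreeing with $<$, so $H$ is homogeneous for $c$ with colour $0$; if $f$ was $<_L$-descending, then the same indices give $f(i_0)>_L f(i_1)>_L\cdots$, so $<_L$ is the reverse of $<$ on $H$, and $H$ is homogeneous with colour $1$. In either case $c$ has an infinite homogeneous set, so $\RT$ holds.

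The two transformations are routine, and I do not expect any genuine obstacle. The one place needing care is the last step: the output of $\ADS$ is a sequence that ignores the ordering of $\mathbb{N}$, and one must thin it to reconcile $<_L$ with $<$ before reading off homogeneity. This thinning is elementary, but it is precisely the reason that $\ADS$ — rather than nothing at all — is what is needed to make the Erd\"{o}s--Moser reduction lossless.
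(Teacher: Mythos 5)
Your proof is correct and is essentially the paper's argument: transform $c$ into a tournament $T_c$, apply $\EM$ to get a transitive set on which $T_c$ is a linear order, apply $\ADS$ to that order, and thin the resulting sequence to respect the ambient order of $\mathbb{N}$ so its range is a homogeneous set. The only cosmetic difference is a flipped sign convention in the definition of $T_c$ (so the roles of colours $0$ and $1$ in the two cases are swapped relative to the paper), and you spell out the thinning step, which the paper dispatches in a single sentence.
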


\begin{proof}
Fix a coloring $c:[\mathbb{N}]^2 \rightarrow \{ 0,1 \}$.  Define an infinite tournament $T_c$ as follows.  $T_c(x,y)$ holds if either 
$x < y$ and $c(x,y) = 1$ or $y < x$ and $c(y,x) = 0$.  Let $S$ be an infinite transitive subtournament of $T_c$ and let $\leq_S$ be the linear 
order on $S$ induced by $T_c$.  By $\ADS$, let $f$ be an infinite ascending sequence or an infinite descending sequence in $(S, \leq_S)$.  
By thinning out $f$, we can assume that $f(0) < f(1) < f(2) < \cdots$ and hence the range of $f$ exists in $\RCA$.  

Suppose that $f$ is an ascending sequence in $\leq_S$.  Fix $n < m$.  Since $f(n) <_S f(m)$, the relation $T_c(f(n),f(m))$ holds.  Because 
$f(n) < f(m)$ and $T_c(f(n),f(m))$ holds, it follows that $c(f(n),f(m)) = 1$.  Therefore, the range of $f$ is homogeneous for $c$ with color $1$.

Suppose that $f$ is a descending sequence in $\leq_S$.  Fix $n < m$.  Since $f(m) <_S f(n)$, the relation $T_c(f(m),f(n))$ holds.  Because 
$f(n) < f(m)$, it follows that $c(f(n),f(m)) = 0$.  Therefore, the range of $f$ is homogeneous for $c$ with color 0.
\end{proof}

\begin{cor}
\label{cor:CAC_EM}
$\CAC$ does not prove $\EM$ (and hence $\ADS$ does not prove $\EM$ either).
\end{cor}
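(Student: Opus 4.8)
The plan is to derive this as an immediate formal consequence of Theorem \ref{thm:BW} together with the already-cited fact that $\CAC$ does not prove $\SRT$. First I would argue by contradiction and suppose that $\CAC$ proves $\EM$. Recall from the discussion preceding Definition 1.7 that $\CAC \vdash \ADS$, via the transformation of instances of $\ADS$ into instances of $\CAC$. Hence, under our assumption, $\CAC$ proves the conjunction $\EM + \ADS$.

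Next I would invoke Theorem \ref{thm:BW}, which gives $\EM + \ADS \vdash \RT$ over $\RCA$. Chaining this with the previous paragraph yields $\CAC \vdash \RT$. But every stable $2$-coloring of $[\mathbb{N}]^2$ is in particular a $2$-coloring, so $\RT \vdash \SRT$ trivially, and therefore $\CAC \vdash \SRT$. This contradicts the theorem of Hirschfeldt and Shore \cite{Hir07} that $\CAC \not\vdash \SRT$, which completes the proof that $\CAC$ does not prove $\EM$.

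For the parenthetical claim, I would simply note that since $\CAC \vdash \ADS$, any proof of $\EM$ from $\ADS$ (over $\RCA$) would yield a proof of $\EM$ from $\CAC$, contradicting what was just established; hence $\ADS$ does not prove $\EM$ either.

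I do not expect any genuine obstacle here: every implication used is already available over the base theory $\RCA$, so the argument is a one-line deduction once Theorem \ref{thm:BW} is in hand. The only point requiring a moment's care is to make sure each link in the chain ($\CAC \vdash \ADS$, Theorem \ref{thm:BW}, $\RT \vdash \SRT$) is genuinely an $\RCA$-provable implication rather than merely an $\omega$-model separation, which is indeed the case for all three.
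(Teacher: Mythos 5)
Your proof is correct and takes essentially the same route as the paper: assume $\CAC \vdash \EM$, combine with $\CAC \vdash \ADS$ and Theorem~\ref{thm:BW} to get $\CAC \vdash \RT$, then contradict Hirschfeldt--Shore. The only cosmetic difference is that you pass through $\SRT$ (using $\CAC \not\vdash \SRT$) while the paper invokes $\CAC \not\vdash \RT$ directly; both are cited from the same source and the logic is identical.
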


\begin{proof}
Suppose for a contradiction that $\CAC$ implies $\EM$.  Since $\CAC$ also proves $\ADS$, it follows from Theorem \ref{thm:BW} that 
$\CAC$ proves $\RT$.  However, by Hirschfeldt and Shore \cite{Hir07}, $\CAC$ does not prove $\RT$.  
\end{proof}

\begin{cor}
\label{cor:EM_ADS}
$\EM$ implies $\RT$ if and only if $\EM$ implies $\ADS$.  
\end{cor}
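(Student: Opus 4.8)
The plan is to prove both implications purely by combining results already established in the excerpt, so no new construction is needed. For the forward direction, I would assume $\EM$ implies $\RT$ and chain together the reductions already recorded: $\RT$ proves $\CAC$ (via the coloring $c_M$ that marks comparable pairs) and $\CAC$ proves $\ADS$ (via the partial order $x \preceq_M y \iff x \leq_L y \wedge x \leq y$). Composing these, $\EM$ proves $\RT$, which proves $\CAC$, which proves $\ADS$; hence $\EM$ proves $\ADS$. (One could also cite the direct transformation of $\ADS$-instances into $\RT$-instances, but routing through $\CAC$ keeps everything internal to the excerpt.)

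For the reverse direction, I would assume $\RCA \vdash \EM \to \ADS$ and argue at the level of provable implications. Then $\RCA + \EM$ proves $\ADS$, so $\RCA + \EM$ proves the conjunction $\EM \wedge \ADS$. By Theorem \ref{thm:BW}, $\RCA + \EM + \ADS \vdash \RT$, and therefore $\RCA + \EM \vdash \RT$, i.e.\ $\EM$ implies $\RT$. The only point requiring any care is to phrase the argument in terms of what each theory proves (rather than in terms of a fixed model), so that the hypothesis ``$\EM$ implies $\ADS$'' is used as a provable-implication statement and Theorem \ref{thm:BW} is applied to the theory $\RCA + \EM$, which now contains $\ADS$ as a theorem.

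Honestly, there is no real obstacle here: the corollary is a formal consequence of Theorem \ref{thm:BW} together with the elementary facts $\RT \vdash \CAC \vdash \ADS$, and the whole proof is two short paragraphs. If anything, the thing to be careful about is getting the direction of each cited implication right and not accidentally invoking an unproved converse.

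\begin{proof}
Suppose first that $\EM$ implies $\RT$. As noted above, $\RT$ implies $\CAC$ and $\CAC$ implies $\ADS$, so $\EM$ implies $\ADS$.

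Conversely, suppose that $\EM$ implies $\ADS$. Then $\RCA + \EM$ proves $\ADS$, and hence $\RCA + \EM$ proves $\EM + \ADS$. By Theorem \ref{thm:BW}, $\EM + \ADS$ implies $\RT$, so $\RCA + \EM$ proves $\RT$; that is, $\EM$ implies $\RT$.
\end{proof}
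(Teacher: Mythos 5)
Your proof is correct and follows essentially the same route as the paper: Theorem \ref{thm:BW} gives the hard direction, and $\RT \vdash \ADS$ (which you obtain by factoring through $\CAC$, a harmless detour) gives the easy one.
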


\begin{proof}
This follows immediately from Theorem \ref{thm:BW} and the fact that $\RT$ implies $\ADS$.  
\end{proof}

An infinite tournament $T$ is \textit{stable} if for all $x$, there is a $y$ such that either $T(x,z)$ holds for all $z > y$ or $T(z,x)$ holds for all $z > y$.  

\medskip
(\SEM) Stable Erd\"{o}s-Moser Principle: Every infinite stable tournament contains an infinite transitive subtournament.

\begin{cor}
$\SEM + \SADS$ implies $\SRT$.
\end{cor}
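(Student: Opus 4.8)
The plan is to re-run the proof of Theorem~\ref{thm:BW}, checking at each step that stability is preserved. First, given a stable coloring $c$, I would form the tournament $T_c$ exactly as in that proof: $T_c(x,y)$ holds if and only if either $x<y$ and $c(x,y)=1$, or $y<x$ and $c(y,x)=0$. The key observation is that $T_c$ is then a \emph{stable} tournament. Indeed, fix $x$; since $c$ is stable, $x\in A^*(c)\cup B^*(c)$, so there is a $y$ such that $c(x,z)$ has a constant value for all $z>y$. If that value is $1$, then $T_c(x,z)$ holds for all $z>y$; if it is $0$, then $T_c(z,x)$ holds for all $z>y$. Hence we may apply $\SEM$ to obtain an infinite transitive subtournament $S$ of $T_c$.

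Next I would equip $S$ with the linear order $\leq_S$ induced by $T_c$ (so $a<_S b$ iff $T_c(a,b)$), just as in Theorem~\ref{thm:BW}, and check that $(S,\leq_S)$ is a stable linear order. This is immediate from stability of $T_c$: for each $a\in S$ there is a $y$ with either $T_c(a,z)$ for all $z>y$ or $T_c(z,a)$ for all $z>y$, and this says precisely that either $a<_S z$ for all $z\in S$ with $z>y$, or $z<_S a$ for all such $z$. (If one wishes to keep to the convention that linear orders have domain $\mathbb{N}$, replace $(S,\leq_S)$ by its pullback along the increasing enumeration $g$ of $S$; since $g(n)\geq n$, this pullback is again stable by the same computation.) I would then apply $\SADS$ to obtain an infinite ascending or infinite descending sequence $f$ in $(S,\leq_S)$.

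Finally, exactly as in Theorem~\ref{thm:BW}, I would thin $f$ so that $f(0)<f(1)<\cdots$ in the order on $\mathbb{N}$, so that the range of $f$ exists in $\RCA$, and then observe: if $f$ is ascending, then for $n<m$ the relation $T_c(f(n),f(m))$ holds with $f(n)<f(m)$, so $c(f(n),f(m))=1$, whence the range of $f$ is homogeneous for color $1$; and if $f$ is descending, then $T_c(f(m),f(n))$ holds with $f(n)<f(m)$, so $c(f(n),f(m))=0$, whence the range of $f$ is homogeneous for color $0$. I do not expect any genuine obstacle here: the whole content is the two stability checks (that $c\mapsto T_c$ and $T_c\mapsto(S,\leq_S)$ both preserve stability), after which the extraction of a homogeneous set is verbatim the argument already given for Theorem~\ref{thm:BW}.
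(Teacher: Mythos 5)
Your proof is correct and follows essentially the same route as the paper: form $T_c$ as in Theorem \ref{thm:BW}, check stability of $T_c$ from stability of $c$, apply $\SEM$, check the induced linear order on $S$ is stable, and apply $\SADS$. The only addition is your explicit remark about pulling back along the increasing enumeration of $S$ to get a linear order with domain $\mathbb{N}$, which the paper leaves implicit.
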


\begin{proof}
Let $c$ be a stable coloring and define $T_c$ as in Theorem \ref{thm:BW}.  We show that $T_c$ is a stable tournament.  

Fix $x$.  Let $y > x$ and $i \in \{ 0,1 \}$ be such that $c(x,z) = i$ for all $z > y$.  
Suppose that $i = 0$.  For every $z > y$, we have $x < z$ and $c(x,z) = 0$, and hence $T_c(z,x)$ holds.  On the other hand, suppose $i=1$.  For all $z > y$,   
we have $x < z$ and $c(x,z) = 1$, we have $T_c(x,z)$ holds.  Therefore, $T_c$ is stable.

By $\SEM$, there is an infinite transitive subtournament $S$ of $T_c$.  The corollary follows once we show that the linear order induced by $T_c$ on $S$ 
is stable.  Fix $x \in S$.  Since $T_c$ is stable, there is a $y > x$ such that either $T_c(x,z)$ holds for all $z > y$ (and hence $x <_S z$ for all $z > y$ with 
$z \in S$) or $T_c(z,x)$ holds for all $z > y$ (and hence $z <_S x$ for all $z > y$ with $z \in S$).  Therefore, $(S,\leq_S)$ is a stable linear order and 
$\SADS$ suffices to extract an infinite ascending or descending chain in $S$.
\end{proof}  

Our second result, to be proved in Section \ref{sec:EM}, is that $\EM$ does not imply $\SRT$, and hence the inclusion of $\ADS$ in Theorem \ref{thm:BW} 
cannot be removed.

\begin{thm}
\label{thm:EM}
There is a Turing ideal $\mathcal{I} \subseteq \mathcal{P}(\omega)$ such that the $\omega$-model $(\omega,\mathcal{I})$ satisfies $\EM$ but not 
$\SRT$.  Therefore, $\EM$ does not imply $\SRT$.
\end{thm}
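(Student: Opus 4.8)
The plan is to build the Turing ideal $\mathcal{I}$ by a standard iterated forcing/preservation argument: start from a computable stable coloring $c$ with no $\Delta^0_2$ (indeed no incomplete $\Sigma^0_2$) homogeneous set — such colorings exist by the classical construction used to show $\SRT \not\leq_{\text{W}}$ the low sets, and more relevantly one can arrange that $\emptyset'$ is computable from every homogeneous set, so that no homogeneous set for $c$ lies in a $\emptyset'$-free ideal. Then one closes off under Turing reducibility, join, and solutions to $\EM$, while maintaining the invariant that no set in the ideal computes $\emptyset'$ (equivalently, $c$ has no homogeneous set in $\mathcal{I}$). Concretely, enumerate all instances of $\EM$ appearing in the (countably many) sets we add; at each stage we have a set $X\in\mathcal{I}$, an $X$-computable infinite tournament $T$, and we must find an infinite transitive subtournament $S$ of $T$ with $X\oplus S$ still not computing $\emptyset'$. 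Iterating and taking the union gives an $\omega$-model of $\EM$; since $c\in\mathcal{I}$ but $c$ has no solution in $\mathcal{I}$, the model fails $\SRT$, hence fails $\SRT$ and a fortiori $\SRT$ — wait, I mean it fails $\SRT$, which is exactly what we want.

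The heart of the matter is therefore the \textbf{single-step cone-avoidance lemma for $\EM$}: for every set $X$ with $\emptyset' \not\leq_{\text{T}} X$ and every infinite $X$-computable tournament $T$, there is an infinite transitive subtournament $S$ of $T$ with $\emptyset'\not\leq_{\text{T}} X\oplus S$. I would prove this by a forcing argument with conditions that are finite transitive subtournaments together with an infinite "reservoir" from which future elements are drawn, where the reservoir is required to be homogeneous in the appropriate sense relative to the already-chosen elements — this is the usual device for building transitive subtournaments, exploiting that inside a tournament, relative to any fixed finite set $F$, one can pass to an infinite subset on which every point of $F$ has a fixed "side." To get cone avoidance, I would use a pair of mutually generic (or a $\Sigma^0_1$-generic-style) argument: for each Turing functional $\Phi$ and each bit of $\emptyset'$, force either a disagreement $\Phi^{X\oplus S}(n)\!\downarrow\,\neq\,\emptyset'(n)$ or a permanent divergence, using the freedom in how the reservoir is split. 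The key combinatorial input is that the reservoir can always be refined to decide the next step \emph{in at least one of two ways} (add a point on the left or on the right of the current transitive chain), and one of those two ways must preserve non-computation of $\emptyset'(n)$ — this is the standard "one of two disjoint requirements is satisfiable" trick, combined with the tournament structure guaranteeing that at least one side keeps infinitely many candidates.

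I expect the \textbf{main obstacle} to be exactly this last point: unlike for $\RT$, where homogeneity gives a clean two-sided splitting of the reservoir and a direct Mathias-forcing cone-avoidance argument (as in Seetapun–Slaman / Cholak–Jockusch–Slaman), a transitive subtournament only forces each new point to be consistently "above" or "below" the current finite chain, and the reservoir need not split into two infinite homogeneous halves in the way one wants. One must argue more carefully that, for any finite transitive $F$ and infinite reservoir $R$, and any requirement we wish to meet, there is an extension of $F$ by one point drawn from $R$ together with a refinement of $R$ that (i) keeps the chain transitive, (ii) makes progress toward the requirement, and (iii) leaves the reservoir infinite — and that when we are trying to avoid coding $\emptyset'(n)$, the bifurcation into "the $\Sigma^0_1$ condition $\Phi^{X\oplus S}(n)\!\downarrow$ is forceable below this condition" versus "it is not" can be resolved within the class of $X$-computable subtournaments without appealing to $\emptyset'$. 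This is where I would either invoke the low$_2$-type preservation machinery of Cholak–Jockusch–Slaman adapted to tournaments (noting, as the introduction already observes, that computable instances of $\EM$ have low$_2$ solutions via the $\RT$ translation) or give a direct forcing argument; the former is cleaner, since low$_2$-ness — indeed, the stronger property that solutions can be found inside any sufficiently generic Mathias-style extension avoiding a fixed cone — transfers through the Bovykin–Weiermann-style translation and yields the required closure without ever producing a set above $\emptyset'$.
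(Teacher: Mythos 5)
The central premise of your strategy is false, and that gap is precisely what makes this theorem hard. You propose to start from a computable stable coloring $c$ such that every infinite homogeneous set for $c$ computes $\emptyset'$, and then run a cone-avoidance iteration. No such coloring exists: by Seetapun's theorem, for every computable $2$-coloring of pairs --- a fortiori every computable stable coloring --- and every noncomputable set $C$, there is an infinite homogeneous set $H$ with $C \not\leq_T H$; taking $C = \emptyset'$ shows the solutions to $c$ cannot all lie in the cone above $\emptyset'$. What is true, and what you appear to be conflating with this, is that there exist computable stable colorings with no $\Delta^0_2$ solution (Theorem~\ref{thm:KLSW} is the $\EM$ analogue of that fact), but ``no $\Delta^0_2$ solution'' is much weaker than ``every solution computes $\emptyset'$,'' and only the latter would license the cone-avoidance preservation you want. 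The fallback you sketch --- transferring low$_2$ or cone-avoiding preservation for $\RT$ through the Bovykin--Weiermann translation --- is self-defeating for the same reason: any preservation property strong enough to close an ideal under $\EM$-solutions that \emph{also} holds for $\SRT$ (as low$_2$-ness and cone avoidance both do, by Cholak--Jockusch--Slaman and Seetapun respectively) would, applied to $c$, put a solution to $c$ into the ideal.

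This is not a fixable local gap; it is the reason the paper's argument is structured as it is. Because there appears to be no a priori-definable class of degrees containing $c$ but excluding all of its solutions, the paper does not take $c$ to be computable: it constructs $c$ by a ground forcing \emph{simultaneously} with establishing the conditions (uniform density of the requirements $\mathcal{K}^{X,A^*(c),B^*(c)}$) that the iteration forcing will later need in order to add solutions to instances of $\EM$ without adding a solution to $c$. The iteration step is not cone avoidance either: it is a Mathias-style forcing with ``family of subtournaments'' reservoirs, and the diagonalization against $\Phi_m^{X\oplus G}$ being a solution to $c$ goes through a tree/partition argument (Theorem~\ref{thm:settle}) keyed to $A^*(c)$ and $B^*(c)$ rather than to any jump-definable notion. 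The paper's concluding section makes exactly this point and explicitly poses as an open question whether any a priori-definable class of degrees can witness the $\ADS$/$\SCAC$ (and by the same token $\EM$/$\SRT$) separation; your proposal implicitly assumes a positive answer for the class of degrees not above $\emptyset'$, which Seetapun's theorem rules out.
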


\begin{cor}
\label{cor:EM}
$\EM$ does not imply $\SADS$ (and hence neither $\EM$ nor $\SEM$ implies either $\ADS$ or $\SADS$).
\end{cor}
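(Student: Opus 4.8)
The plan is to extract everything from the $\omega$-model of Theorem~\ref{thm:EM}. Fix a Turing ideal $\mathcal{I}$ with $(\omega,\mathcal{I}) \models \EM$ and $(\omega,\mathcal{I}) \not\models \SRT$. First I would record the trivial implication $\EM \vdash \SEM$ over $\RCA$: every infinite stable tournament is in particular an infinite tournament, so any infinite transitive subtournament furnished by $\EM$ already witnesses $\SEM$. Hence $(\omega,\mathcal{I}) \models \SEM$ as well.

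Next, I would argue by contradiction. If $\EM$ proved $\SADS$, then $(\omega,\mathcal{I}) \models \SADS$, so $(\omega,\mathcal{I})$ would be a model of $\SEM + \SADS$. By the corollary above that $\SEM + \SADS$ implies $\SRT$, we would get $(\omega,\mathcal{I}) \models \SRT$, contradicting the choice of $\mathcal{I}$. Therefore $\EM$ does not prove $\SADS$.

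The parenthetical statements then follow by purely formal manipulations of the implications already in hand. Since every infinite stable linear order is an infinite linear order, $\ADS \vdash \SADS$ over $\RCA$; so if $\EM$ proved $\ADS$ it would prove $\SADS$, which we have just ruled out. And because $\EM \vdash \SEM$, anything provable from $\SEM$ is provable from $\EM$; hence $\SEM$ proves neither $\ADS$ nor $\SADS$. I do not expect any genuine obstacle in this corollary: all the mathematical content sits in Theorem~\ref{thm:EM} (deferred to Section~\ref{sec:EM}) and in the already-established fact that $\SEM + \SADS$ implies $\SRT$, so the proof is just a matter of assembling these with the two one-line implications $\EM \vdash \SEM$ and $\ADS \vdash \SADS$.
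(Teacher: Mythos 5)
Your proof is correct and follows essentially the same route as the paper's: both argue by contradiction using the trivial implication $\EM \vdash \SEM$ together with the established corollary that $\SEM + \SADS \vdash \SRT$ to contradict Theorem~\ref{thm:EM}. The only cosmetic difference is that you unfold the argument at the level of the $\omega$-model $(\omega,\mathcal{I})$, whereas the paper phrases it directly in terms of implications between principles.
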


\begin{proof}
Suppose for a contradiction that $\EM$ implies $\SADS$.  Since $\EM$ implies $\SEM$, and 
$\SEM + \SADS$ implies $\SRT$, we have $\EM$ implies $\SRT$, 
contradiction Theorem \ref{thm:EM}.
\end{proof}

\section{$\ADS$ does not imply $\SCAC$}
\label{sec:ADS}

\subsection{Outline}

In this section, we prove Theorem \ref{thm:ADS} by constructing a Turing ideal $\mathcal{I} \subseteq \mathcal{P}(\omega)$ such that 
$(\omega, \mathcal{I}) \vDash \ADS$ and $\mathcal{I}$ contains a stable partial order $M = (\mathbb{N}, \preceq_M)$ but does not contain a 
solution to $M$.  The construction proceeds in two steps; we use a ground forcing to build $M$ followed by an iterated forcing to add solutions 
to infinite linear orders without adding a solution to $M$.  

Recall that for an infinite poset $M$, $A^*(M)$ is the set of elements which are below almost every element and 
$B^*(M)$ is the set of elements which are incomparable with almost every element.  In the ground forcing, we specify $A^*(M)$ and 
$B^*(M)$ as we construct $M$ so that $A^*(M) \cup B^*(M) = \mathbb{N}$ and hence $M$ is stable.  We satisfy two types of requirements.  First, 
to ensure that $M$ cannot compute a solution to itself it suffices to ensure that if $\Phi_e^M$ is infinite, then $\Phi_e^M(a) = \Phi_e^M(b) = 1$ for 
some $a \in A^*(M)$ and $b \in B^*(M)$.  Since we are defining $A^*(M)$ and $B^*(M)$ as we construct $M$, these are easy to satisfy.   Second, 
we satisfy ground level requirements which guarantee that requirements for the first level of the iteration forcing are appropriately dense (in a sense 
defined below). 

For the first level of the iteration forcing, we begin with $M$, $A^*(M)$ and $B^*(M)$ already defined.  We fix an index $e$ such that $\Phi_e^M$ is an 
infinite linear order and attempt to add a solution $f$ for $\Phi_e^M$ to $\mathcal{I}$ so that $M \oplus f$ does not compute a solution to $M$.  
As above, the strategy is to show that if $\Phi_e^{M \oplus f}$ is infinite, then there are elements $a \in A^*(M)$ and $b \in B^*(M)$ such that 
$\Phi_e^{M \oplus f}(a) = \Phi_e^{M \oplus f}(b) = 1$.  However, since $A^*(M)$ and $B^*(M)$ are already defined, implementing this strategy requires 
using the fact that the ground forcing ensured that requirements for the iterated forcing are appropriately dense.  This density will mean 
that as $f$ is defined, if there are lots of options to force large numbers into $\Phi_e^{M \oplus f}$, then there must be numbers from $A^*(M)$ and 
$B^*(M)$ in $\Phi_e^{M \oplus f}$.  In addition to handling these diagonalization strategies, we need to guarantee that the 
requirements for the next level of the iteration forcing are appropriately dense.  

In the construction below, we explain the iteration forcing first (assuming $M$, $A^*(M)$ and $B^*(M)$ have already been constructed) 
because it allows us to introduce the density notions that have to be forced at the ground level.  After explaining the iteration forcing, 
we present the ground forcing to construct $M$, $A^*(M)$ and $B^*(M)$.  

Before starting the construction, we restrict the collection of infinite linear orders for which we need to add solutions to $\mathcal{I}$.  

\begin{defn}  
\label{defn:V}
A linear ordering $(\mathbb{N}, \prec)$ is \emph{stable-ish} if there is a non-empty initial segment $V$ which has no maximum under $\prec$, and such that 
$\mathbb{N}\setminus V$ is non-empty and has no minimum under $\prec$.
\end{defn}
Note that there is no requirement that the set $V$ be computable from $\prec$.

\begin{lem}
  If $(\mathbb{N},\prec)$ is not stable-ish then there is a solution to $(\mathbb{N},\prec)$ computable from $\prec$.
\end{lem}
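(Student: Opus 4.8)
The plan is to separate into cases according to whether $(\mathbb{N},\prec)$ has a $\prec$-least element and whether it has a $\prec$-greatest element. If $(\mathbb{N},\prec)$ has no $\prec$-greatest element, then given any $x$ one can $\prec$-computably search for, and find, some $y$ with $x \prec y$; starting from $x_0 = 0$ and iterating, this produces a $\prec$-computable infinite ascending sequence, hence a solution. Dually, if there is no $\prec$-least element, greedily descending gives a $\prec$-computable infinite descending sequence. So we may assume $(\mathbb{N},\prec)$ has both a $\prec$-least and a $\prec$-greatest element.

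Under that assumption the plan is to locate a single element $c$ with the property that $\{y : y \prec c\}$ is infinite and has no $\prec$-greatest element, or, symmetrically, an element $c$ with $\{y : y \succ c\}$ infinite and having no $\prec$-least element. Here I only need such a $c$ to \emph{exist} --- it can be built into the algorithm as a constant, so there is no need to compute it from $\prec$ (this is the point of the remark after Definition~\ref{defn:V}). Given a $c$ of the first kind, the set $\{y : y \prec c\}$ is $\prec$-computable, infinite, and has no greatest element, so running the greedy ascending construction inside it --- start at its $\mathbb{N}$-least element and always pass to the $\mathbb{N}$-least $\prec$-larger element that is still $\prec c$ --- never stalls and yields a $\prec$-computable ascending sequence. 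A $c$ of the second kind is handled dually.

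It remains to show that such a $c$ must exist when $(\mathbb{N},\prec)$ is not stable-ish and has both endpoints. For this I would consider $P = \{x : \{y : y \prec x\} \text{ is finite}\}$ and $Q = \{x : \{y : y \succ x\} \text{ is finite}\}$. Then $P$ is an initial segment containing the $\prec$-least element, $Q$ is a final segment containing the $\prec$-greatest element, and $P \cap Q = \emptyset$ (otherwise $(\mathbb{N},\prec)$ would be finite). If $P$ is infinite, then $P$ has no $\prec$-greatest element (a greatest element of $P$ would make $P$ finite) while $\mathbb{N}\setminus P$ is nonempty; since $(\mathbb{N},\prec)$ is not stable-ish, $\mathbb{N}\setminus P$ must then have a $\prec$-least element $c_0$, and $\{y : y \prec c_0\} = P$, so $c = c_0$ works. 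The case where $Q$ is infinite is symmetric. Finally, if $P$ and $Q$ are both finite, then $R := \mathbb{N}\setminus(P\cup Q)$ is infinite; one checks that $R$ is exactly the $\prec$-interval $\{y : p_0 \prec y \prec q_0\}$, where $p_0 = \max_\prec P$ and $q_0 = \min_\prec Q$, and that $R$ has no $\prec$-greatest element (a greatest element of $R$ would have infinitely many $\prec$-successors, all of which must lie in the finite set $Q$). Hence $\{y : y \prec q_0\} = P \cup R$ is infinite with no greatest element, and $c = q_0$ works.

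The main obstacle is this last case: the greedy constructions fail precisely when $\prec$ has both endpoints, and the remedy is to find --- non-uniformly --- an ``anchor'' $c$ past which $\prec$ is a copy of $\omega$ or of $\omega^{*}$, which is exactly where the failure of stable-ish-ness is used. The remaining work --- verifying that $P$, $Q$, $R$ are segments or intervals, that $P \cap Q = \emptyset$, and that $R$ inherits no $\prec$-greatest element --- is routine but contains essentially all of the order-theoretic content of the lemma.
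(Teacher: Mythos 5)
Your proof is correct, and the underlying mechanism is the same as the paper's: run a greedy ascending (or descending) search inside a $\prec$-computable non-empty initial (or final) segment with no greatest (or least) element, and use the failure of stable-ish-ness to guarantee that such a computable segment exists non-uniformly. Where you differ is in the case decomposition. The paper's proof splits directly on whether there exists \emph{any} non-empty initial segment with no maximum: if yes, not-stable-ish-ness forces that segment to be either all of $\mathbb{N}$ or of the form $\{x : x \prec b\}$ for the $\prec$-minimum $b$ of its complement, and both are computable; if no, the set of elements with finitely many predecessors (your $P$) is finite, so its cofinite complement is computable and has no minimum, and the dual greedy search works there. Your proof instead first peels off the easy no-endpoint cases, then introduces both $P$ and $Q$ and subdivides into $P$ infinite / $Q$ infinite / both finite. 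This buys a very explicit description of the anchor $c$ in each subcase, but at the cost of extra bookkeeping (and the both-finite subcase, which the paper's decomposition absorbs automatically: when $P$ and $Q$ are both finite, $P \cup R$ is itself a non-empty initial segment with no maximum, so it lands in the paper's first case with $b$ being your $q_0$). The paper's two-case split is the same idea organized around the definition of stable-ish rather than around the endpoints, and is somewhat leaner for that reason.
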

\begin{proof}
  Assume $(\mathbb{N},\prec)$ is not stable-ish.  Note that if $V$ is a non-empty initial segment with no maximum element, then $V$ can compute 
an infinite ascending sequence.  Let $a_1\in V$ be arbitrary.  Given $a_n$, there must be infinitely many elements $x \in V$ such that 
$a_n \prec x$, so simply search (effectively in $V$) for such an element and set $a_{n+1} = x$.

If there is a non-empty initial segment $V$ with no maximum, observe that since $\prec$ is not stable-ish, either $\mathbb{N}\setminus V=\emptyset$, 
in which case $V$ is computable, or $\mathbb{N}\setminus V$ has a minimal element $b$, in which case $V=\{x\mid x\prec b\}$.  In either case, 
$V$ is computable from $\prec$, and so there is an infinite ascending sequence computable from $\prec$.

So suppose there is no such $V$.  Then every non-empty initial segment has a maximum element.  Let $V$ be the set of elements with finitely 
many predecessors.  $V$ is either empty or finite, since if $V$ were infinite, it would not have a maximal element.  Thus $\mathbb{N}\setminus V$ 
is computable from $\prec$, and can have no minimal element. (Any minimal element would have only the finitely many elements of $V$ 
as predecessors, and would therefore belong to $V$.)  Therefore, by an argument similar to the one above, $\mathbb{N} \setminus V$
contains an infinite descending sequence computable from $\prec$.  
\end{proof}

We end this subsection by fixing some notation and conventions.  If $\sigma$ and $\delta$ are finite strings, then $\sigma^\frown\delta$ denotes the concatenation of 
$\sigma$ and $\delta$.  We write $\sigma \sqsubseteq \tau$ to denote that $\sigma$ is an initial segment of $\tau$ (i.e.~$\tau = \sigma^\frown\delta$ for some 
string $\delta$).  If $\prec$ is a linear order on $\mathbb{N}$, $\sigma$ is a finite sequence which is ascending in $\prec$ and 
$\tau$ is a finite sequence which is descending in $\prec$, then $\sigma \prec \tau$ means that $\sigma(|\sigma|-1) \prec \tau(|\tau|-1)$ (i.e.~the last element in 
$\sigma$ is strictly below the last element in $\tau$ in the $\prec$ order).

For any computation in which part of the oracle is a finite string, for example $\Phi_k^{X \oplus \sigma}$, we follow the standard convention 
that if $\Phi_k^{X \oplus \sigma}(y)$ converges, then both $y$ and the use of the computation are bounded by $|\sigma|$.  

\subsection{Iteration Forcing}\label{sec:it1}

Assume that we have already used the ground 
forcing to construct our stable poset $(M, \preceq_M)$ along with $A^*(M)$ and $B^*(M)$.  The general context for one step of the iteration forcing will 
be a fixed set $X$ and an index $e$ meeting the following conditions: 
\begin{itemize}
\item $M \leq_T X$; 
\item $X$ does not compute a solution to $M$; 
\item $\Phi_e^X$ is the characteristic function for a stable-ish linear order $\prec_e^X$ on $\mathbb{N}$; and 
\item each requirement $\mathcal{K}^{X,A^*(M),B^*(M)}$ is uniformly dense (defined below).  
\end{itemize}
The ground forcing will create this context for $X = M$.  Our goal is to 
find a generic solution $G$ for $\prec_e^X$ (either an infinite ascending or descending sequence) 
such that $X \oplus G$ does not compute a solution to $M$ and such that for each stable-ish linear 
order $\prec_{e'}^{X \oplus G}$, the requirements $\mathcal{K}^{X \oplus G,A^*(M),B^*(M)}$ are uniformly dense.  We add $G$ to the Turing ideal and 
note that for any index $e'$ such that $\prec_{e'}^{X \oplus G}$ is a stable-ish linear order, 
we have created the context for the iteration forcing to continue with $X \oplus G$.

Before giving the specifics of our forcing notion, we describe the basic intuition for constructing a solution $G$ for $\prec_e^X$ while 
diagonalizing against computing a solution to $M$ from $X \oplus G$.  We work with pairs $(\sigma,\tau)$ where $\sigma$ is a finite ascending 
sequence in $\prec_e^X$, $\tau$ is a finite descending sequence in $\prec_e^X$ and $\sigma \prec_e^X \tau$.  We view this pair as a 
simultaneous attempt to build an infinite ascending solution and an infinite descending solution to $\prec_e^X$.  The goal is to construct 
an infinite nested sequence of such pairs $(\sigma_k, \tau_k)$ such that we succeed either with $G = \sigma = \cup \sigma_k$ or with 
$G = \tau = \cup \tau_k$.

Suppose we have constructed a pair $(\sigma_k,\tau_k)$.  A typical diagonalization requirement is specified by a pair of indices $m$ and $n$.  
To meet this requirement, we need to either 
\begin{itemize}
\item find an ascending sequence $\sigma_{k+1}$ extending $\sigma_k$ such that $\sigma_{k+1} \prec_e^X \tau_k$ and there 
exists a pair of elements 
$a \in A^*(M)$, $b \in B^*(M)$ such that $\Phi_m^{X \oplus \sigma_{k+1}}(a) = \Phi_m^{X \oplus \sigma_{k+1}}(b) = 1$; or
\item find a descending sequence $\tau_{k+1}$ extending $\tau_k$ such that $\sigma_k \prec_e^X \tau_{k+1}$ and there exists a pair of elements 
$a \in A^*(M)$, $b \in B^*(M)$ such that $\Phi_n^{X \oplus \tau_{k+1}}(a) = \Phi_n^{X \oplus \tau_{k+1}}(b) = 1$.
\end{itemize}
That is, we extend our approximation to an ascending solution to $\prec_e^X$ in a manner that diagonalizes or we extend our approximation to a 
descending solution to $\prec_e^X$ in a manner that diagonalizes.  If we can always win on the ascending side, then $G = \cup \sigma_k$ is an 
infinite ascending solution to $\prec_e^X$ such that $X \oplus G$ cannot compute a solution to $M$.   Otherwise, there is an index $m$ for which we 
cannot win on the ascending side.  In this case, we must win on the descending side for every index $n$ (when it is paired with $m$) and hence 
$G = \cup \tau_k$ is an appropriate infinite descending solution to $\prec_e^X$.

In general, there is no reason to think we can meet these requirements without some additional information about $X$.  It is the fact that 
each requirement $\mathcal{K}^{X,A^*(M),B^*(M)}$ is uniformly dense which allows us to meet these requirements.  We first focus on formalizing 
these diagonalization requirements in a general context and then we show why this 
general context also forces the requirements $\mathcal{K}^{X \oplus G, A^*(M),B^*(M)}$ to be uniformly dense at the next level.  

We begin by defining the following sets, each computable from $X$.
\begin{gather*}
\mathbb{A}_e^X = \{ \sigma \mid \sigma \text{ is a finite ascending sequence in} \prec_e^X \} \\
\mathbb{D}_e^X = \{ \tau \mid \tau \text{ is a finite descending sequence in} \prec_e^X \} \\
\mathbb{P}_e^X = \{ (\sigma,\tau) \mid \sigma \in \mathbb{A}_e^X \wedge \tau \in \mathbb{D}_e^X \wedge \sigma \prec_e^X \tau \}
\end{gather*}
$\mathbb{P}_e^X$ is our set of forcing conditions.  For $p \in \mathbb{P}_e^X$, we let $\sigma_p$ and $\tau_p$ denote the first and 
second components of $p$.  For $p,q \in \mathbb{P}_e^X$, we say $q \leq p$ if $\sigma_p \sqsubseteq \sigma_q$ and 
$\tau_p \sqsubseteq \tau_q$. 

To define the generic $G$, we construct a sequence $p_0 \geq p_1 \geq p_2 \geq \cdots$ of conditions $p_n = (\sigma_n,\tau_n) \in 
\mathbb{P}_e^X$.  At the $(n+1)$st step, we define $p_{n+1} \leq p_n$ to meet the highest priority requirement 
$\mathcal{K}^{X,A^*(M),B^*(M)}$ which is not yet satisfied.  Meeting this requirement will make progress either towards making 
$\sigma = \cup_n \sigma_n$ our desired infinite ascending 
solution to $\prec_e^X$ or towards making $\tau = \cup_n \tau_n$ our desired infinite descending solution to $\prec_e^X$.  In the end, we show that 
one of $G = \sigma$ or $G = \tau$ satisfies all the requirements.

Before defining the requirements, there is one obvious worry we need to address.  
During this process, we need to avoid taking a step which eliminates either side from being extendible to a solution of  
$\prec_e^X$.  Because $\prec_e^X$ is stable-ish, we fix a set $V$ for $\prec_e^X$ as in Definition \ref{defn:V}.  
We define 
\[
\mathbb{V}_e^X = \{ (\sigma, \tau) \in \mathbb{P}_e^X \mid \sigma \subseteq V \, \wedge \, \tau \subseteq \mathbb{N} \setminus V \}.
\]
For $(\sigma,\tau) \in \mathbb{V}_e^X$, $\sigma$ is an initial segment of an increasing solution to $\prec_e^X$ and 
$\tau$ is an initial segment of a decreasing solution to $\prec_e^X$.  Therefore, as long as we choose our generic sequence to lie within 
$\mathbb{V}_e^X$, we will never limit either side from being extendible to a solution to $\prec_e^X$.  However, working strictly in 
$\mathbb{V}_e^X$ has the disadvantage that $\mathbb{V}_e^X$ is not computable from $X$.  We reconcile the advantages of 
working in $\mathbb{P}_e^X$ (which is computable from $X$) with working in $\mathbb{V}_e^X$ by using split pairs.

\begin{defn}
A \emph{split pair} below $p=(\sigma_p,\tau_p)$ is a pair of conditions $q_0=(\sigma_p^\frown\sigma',\tau_p)$ and $q_1=(\sigma_p,\tau_p^\frown\tau')$ 
such that $\sigma'\prec_e^X\tau'$.  
\end{defn}

\begin{lem}
\label{split}
  If $p\in\mathbb{V}_e^X$ and $q_0,q_1$ is a split pair below $p$ then either $q_0\in\mathbb{V}_e^X$ or $q_1\in\mathbb{V}_e^X$.
\end{lem}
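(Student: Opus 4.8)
The proof is a short case analysis on where the $\prec_e^X$-largest entry of $\sigma'$ lies relative to the fixed initial segment $V$. First I would record the structural facts I will use: since $\sigma_p^\frown\sigma'$ is an ascending sequence in $\prec_e^X$, the component $\sigma'$ is itself ascending and its last entry $s:=\sigma'(|\sigma'|-1)$ is its $\prec_e^X$-maximum; dually, $\tau'$ is descending and its last entry $t:=\tau'(|\tau'|-1)$ is its $\prec_e^X$-minimum. Both $\sigma'$ and $\tau'$ are nonempty, because the defining clause $\sigma'\prec_e^X\tau'$ of a split pair refers to these last entries. Also, since $q_0$ and $q_1$ are conditions we already have $q_0,q_1\in\mathbb{P}_e^X$, so the only thing left to verify for membership in $\mathbb{V}_e^X$ is the constraint ``$\sigma\subseteq V$ and $\tau\subseteq\mathbb{N}\setminus V$''. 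As $p=(\sigma_p,\tau_p)\in\mathbb{V}_e^X$ gives $\sigma_p\subseteq V$ and $\tau_p\subseteq\mathbb{N}\setminus V$, it suffices to show either $\sigma'\subseteq V$ (which puts $q_0$ in $\mathbb{V}_e^X$) or $\tau'\subseteq\mathbb{N}\setminus V$ (which puts $q_1$ in $\mathbb{V}_e^X$).

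\textbf{Case $s\in V$.} Every entry of $\sigma'$ is $\preceq_e^X s$, and $V$ is an initial segment of $\prec_e^X$, so every entry of $\sigma'$ lies in $V$. Hence $\sigma_p^\frown\sigma'\subseteq V$ and $q_0\in\mathbb{V}_e^X$.

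\textbf{Case $s\notin V$.} Then $s\in\mathbb{N}\setminus V$, which is a final segment of $\prec_e^X$. The split-pair inequality $s=\sigma'(|\sigma'|-1)\prec_e^X\tau'(|\tau'|-1)=t$ therefore forces $t\in\mathbb{N}\setminus V$, and since every entry of $\tau'$ is $\succeq_e^X t$, every entry of $\tau'$ lies in $\mathbb{N}\setminus V$. Hence $\tau_p^\frown\tau'\subseteq\mathbb{N}\setminus V$ and $q_1\in\mathbb{V}_e^X$.

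\textbf{Main point.} There is no genuine obstacle here; the one thing to keep straight is that the ``last element'' of a finite ascending (resp.\ descending) $\prec_e^X$-sequence is its $\prec_e^X$-maximum (resp.\ $\prec_e^X$-minimum). That observation is exactly what lets the single comparison $\sigma'\prec_e^X\tau'$ baked into the definition of a split pair propagate, via the initial-segment/final-segment dichotomy for $V$, to all entries of whichever side we keep.
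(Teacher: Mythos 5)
Your proof is correct and takes essentially the same route as the paper's (which compresses the same idea into two sentences: if $q_0\notin\mathbb{V}_e^X$, then $\sigma'$ must ``overflow'' out of $V$, and since $\sigma'\prec_e^X\tau'$ and $\mathbb{N}\setminus V$ is a final segment, all of $\tau'$ lies outside $V$). Your version is more careful about recording the structural facts (last entry of an ascending sequence is its $\prec_e^X$-max, etc.), but the argument is the same case split.
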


\begin{proof}
Let $q_0=(\sigma_p^\frown\sigma',\tau_p)$ and $q_1=(\sigma_p,\tau_p^\frown\tau')$.  Suppose $q_0\not\in\mathbb{V}$.  Since 
$\sigma_p^\frown\sigma' \prec_e^X \tau_p$, it must be that $\sigma'$ overflows from $V$ into $\mathbb{N} \setminus V$.  Therefore, since $\sigma'\prec_e^X\tau'$, 
$q_1\in\mathbb{V}$.
\end{proof}

We will use Lemma \ref{split} as follows.  Each requirement $\mathcal{K}^{X,A^*(M),B^*(M)}$ will have the property that when we need to meet 
$\mathcal{K}^{X,A^*(M),B^*(M)}$ below an element $p_n$ in our generic sequence, there will be a split pair $q_0,q_1$ (from $\mathbb{P}_e^X$) 
below $p_n$ in 
$\mathcal{K}^{X,A^*(M),B^*(M)}$.  Therefore, if $p_n \in \mathbb{V}_e^X$ by induction, then we can meet $\mathcal{K}^{X,A^*(M),B^*(M)}$ within 
$\mathbb{V}_e^X$ by choosing $p_{n+1}$ to be whichever of  $q_0$ and $q_1$ is in $\mathbb{V}_e^X$.    Thus, by starting with 
the empty sequence $p_0$ (which is in $\mathbb{V}_e^X$), we can assume that our generic sequence is chosen in $\mathbb{V}_e^X$.

We have two types of requirements: half requirements and full requirements.  For uniformity of presentation, it is easiest to deal with a general 
definition for the full requirements, although in the end, the only full requirements we need to meet are those made up of a pair of half requirements.

\begin{defn}
We define the following types of requirements and half-requirements.
\begin{itemize}
\item A \emph{requirement} is a downward closed set $\mathcal{K}^{X,A^*(M),B^*(M)} \subseteq \mathbb{P}_e^X$ such that 
\[
\mathcal{K}^{X,A^*(M),B^*(M)} = \{ p \in \mathbb{P}_e^X \mid \exists \overline{a} \in A^*(M) \, \exists \overline{b} \in B^*(M) 
\, (K^X(p,\overline{a},\overline{b})) \}
\]
for some relation $K^X(x,\overline{y},\overline{z})$ computable in $X$.  
\item An $\mathbb{A}$-\emph{side half requirement} is a set $\mathcal{R}^{X,A^*(M),B^*(M)} \subseteq \mathbb{A}_e^X$ which is closed under 
extensions such that 
\[
\mathcal{R}^{X,A^*(M),B^*(M)} = \{ \sigma \in \mathbb{A}_e^X \mid \exists \overline{a} \in A^*(M) \, \exists \overline{b} \in B^*(M) \, 
( R^X(\sigma,\overline{a},\overline{b})) \}
\]
for some relation $R^X(x,\overline{y},\overline{z})$ computable in $X$.  
\item A $\mathbb{D}$-\emph{side half requirement} is a set $\mathcal{S}^{X,A^*(M),B^*(M)} \subseteq \mathbb{D}_e^X$ which is closed under 
extensions such that 
\[
\mathcal{S}^{X,A^*(M),B^*(M)} = \{ \tau \in \mathbb{D}_e^X \mid \exists \overline{a} \in A^*(M) \, \exists \overline{b} \in B^*(M) \, 
( S^X(\tau,\overline{a},\overline{b})) \}
\]
for some relation $S^X(x,\overline{y},\overline{z})$ computable in $X$.  
\item If $\mathcal{R}^{X,A^*(M),B^*(M)}$ is an $\mathbb{A}$-side half requirement and $\mathcal{S}^{X,A^*(M),B^*(M)}$ is a $\mathbb{D}$-side half requirement, 
then $\mathcal{J}_{\mathcal{R},\mathcal{S}}^{X,A^*(M),B^*(M)}$ is the requirement 
\[
\mathcal{J}_{\mathcal{R},\mathcal{S}}^{X,A^*(M),B^*(M)} = \{ p \in \mathbb{P}_e^X \mid \sigma_p \in \mathcal{R}^{X,A^*(M),B^*(M)} \vee \tau_p \in 
\mathcal{S}^{X,A^*(M),B^*(M)} \}.
\]
\end{itemize}
\end{defn}

We say $\mathcal{R}^{X,A^*(M),B^*(M)}$ is a half requirement to mean that it is either an $\mathbb{A}$-side or a $\mathbb{D}$-side 
half requirement.  Each requirement and half requirement is c.e.~in $X \oplus A^*(M) \oplus B^*(M)$ and 
the dependence on $A^*(M)$ and $B^*(M)$ is positive.  

\begin{example}
\label{mainexample}
Fix a pair of indices $m$ and $n$.  The formal version of our basic diagonalization strategy is given by the following half requirements:
\begin{gather*}
\mathcal{A}_m^{X,A^*(M),B^*(M)} = \{ \sigma \in \mathbb{A}_e^X \mid \exists a \in A^*(M) \, \exists b \in B^*(M) \, 
(\Phi_{m}^{X \oplus \sigma}(a) = \Phi_{m}^{X \oplus \sigma}(b) = 1) \}, \\
\mathcal{D}_n^{X,A^*(M),B^*(M)} = \{ \tau \in \mathbb{D}_e^X \mid \exists a \in A^*(M) \, \exists b \in B^*(M) \, 
(\Phi_{n}^{X \oplus \tau}(a) = \Phi_{n}^{X \oplus \tau}(b) = 1) \}.
\end{gather*}
These half requirements combine to form the requirement 
\[
\mathcal{J}_{\mathcal{A}_m,\mathcal{D}_n}^{X,A^*(M),B^*(M)} = 
\left\{ p \in \mathbb{P}_e^X \mid \sigma_p \in \mathcal{A}_m^{X,A^*(M),B^*(M)} \vee \tau_p \in \mathcal{D}_n^{X,A^*(M),B^*(M)} \right\}.
\]
Notice that if $\sigma \in \mathcal{A}_m^{X,A^*(M),B^*(M)}$ and $\sigma \sqsubseteq G$, then $\Phi_m^{X \oplus G}$ is not a solution to 
$M$.  Similarly, if $\tau \in \mathcal{D}_n^{X,A^*(M),B^*(M)}$ and $\tau \sqsubseteq G$, then $\Phi_n^{X \oplus G}$ is not a solution to $M$.  
\end{example}

We next describe when an $\mathbb{A}$-side half requirement $\mathcal{R}^{X,A^*(M),B^*(M)}$ is satisfied by an infinite ascending  
sequence $\Lambda$ in $\prec_e^X$.  (With the obvious changes, this description applies to a $\mathbb{D}$-side half requirement 
$\mathcal{S}^{X,A^*(M),B^*(M)}$ and an infinite descending sequence $\Lambda$.)  
$\mathcal{R}^{X,A^*(M),B^*(M)}$ is specified by an index $i$ such that 
\[
\mathcal{R}^{X,A^*(M),B^*(M)} = \{ \sigma \in \mathbb{A}_e^X \mid \exists \overline{a} \in A^*(M) \, \exists \overline{b} \in B^*(M) \, 
( \Phi_i^X(\sigma,\overline{a},\overline{b}) = 1) \}
\]
where $\Phi_i^X$ is total.  For any (typically finite) sets $A$ and $B$ (given by canonical indices), we let 
\[
\mathcal{R}^{X,A,B} = \{ \sigma \in \mathbb{A}_e^X \mid \exists \overline{a} \in A \, \exists \overline{b} \in B \, 
( \Phi_i^X(\sigma,\overline{a},\overline{b}) = 1) \}.
\]
Unlike $\mathcal{R}^{X,A^*(M),B^*(M)}$, the set $\mathcal{R}^{X,A,B}$ is not necessarily 
closed under extensions.  However, for any finite sets $A$ and $B$, we have $\mathcal{R}^{X,A,B} \leq_T X$.  

 We write $\mathcal{R}^X$ to indicate the operation mapping $A,B$ to $\mathcal{R}^{X,A,B}$.  

\begin{defn}
\label{def:halfessential}
$\mathcal{R}^{X}$ is \emph{essential} in $\Lambda$ if for every $n$ and every $x$, there is a finite set $A > x$ such that 
for every $y$, there is a finite set $B > y$ and an $m>n$ so that $\Lambda\upharpoonright m\in\mathcal{R}^{X,A,B}$.  
We say the infinite ascending sequence $\Lambda$ \emph{satisfies} $\mathcal{R}^{X,A^*(M),B^*(M)}$ if either
  \begin{itemize}
  \item $\mathcal{R}^{X}$ is not essential in $\Lambda$, or
  \item there is an $n$ such that $\Lambda\upharpoonright n\in\mathcal{R}^{X,A^*(M),B^*(M)}$.
  \end{itemize}
\end{defn}

\begin{example}
Consider the $\mathbb{A}$-side diagonalization half requirement 
\[
\mathcal{A}_m^{X,A^*(M),B^*(M)} = \{ \sigma \in \mathbb{A}_e^X \mid \exists a \in A^*(M) \, \exists b \in B^*(M) \, 
(\Phi_{m}^{X \oplus \sigma}(a) = \Phi_{m}^{X \oplus \sigma}(b) = 1) \}
\]
and an infinite ascending sequence $\Lambda$ in $\prec_e^X$.  $\mathcal{A}_m^{X}$ is essential in $\Lambda$ if and only if 
$\Phi_m^{X \oplus \Lambda}$ is infinite.  
Therefore, $\mathcal{A}_m^{X,A^*(M),B^*(M)}$ is satisfied by $\Lambda$ if and only if either $\Phi_m^{X \oplus \Lambda}$ is finite 
or there exists $a \in A^*(M)$ and $b \in B^*(M)$ 
such that $\Phi_m^{X \oplus \Lambda}(a) = \Phi_m^{X \oplus \Lambda}(b) = 1$.  In either case, $\Lambda$ is a solution to $\prec_e^X$ such that  
$\Phi_m^{X \oplus \Lambda}$ is not a solution to $M$.  
\end{example}

This example does not explain why we need the quantifier alternations in Definition \ref{def:halfessential}.  This quantifier alternation will be 
reflected in a similar definition for full requirements and the reason for it will become clear in the ground forcing.  

We need similar notions in the context of our (full) requirements.  Each requirement $\mathcal{K}^{X,A^*(M),B^*(M)}$ is specified by 
an index $i$ such that 
\[
\mathcal{K}^{X,A^*(M),B^*(M)} = \{ p \in \mathbb{P}_e^X \mid \exists \overline{a} \in A^*(M) \, \exists \overline{b} \in B^*(M) \, 
( \Phi_i^X(p,\overline{a},\overline{b}) = 1) \}
\]
where $\Phi_i^X$ is total.  For any (typically finite) sets $A$ and $B$, we let 
\[
\mathcal{K}^{X,A,B} = \{ p \in \mathbb{P}_e^X \mid \exists \overline{a} \in A \, \exists \overline{b} \in B \, (\Phi_i^X(p,\overline{a},\overline{b}) = 1) \}.
\]
As above, the set $\mathcal{K}^{X,A,B}$ need not be downward closed in $\mathbb{P}_e^X$, but is computable from $X$ when $A$ and $B$ 
are finite.  

\begin{defn}
$\mathcal{K}^{X}$ is \emph{essential} below $p\in\mathbb{P}_e^X$ if for every $x$, there is a finite set 
$A > x$ such that for every $y$, there is a finite set $B > y$ and a split pair $q_0,q_1$ below $p$ such that $q_0,q_1\in\mathcal{K}^{X,A,B}$.

$\mathcal{K}^{X,A^*(M),B^*(M)}$ is \emph{uniformly dense} if whenever $\mathcal{K}^{X}$ is essential below $p$, there is a split pair $q_0,q_1$ below $p$ belonging to $\mathcal{K}^{X,A^*(M),B^*(M)}$.
\end{defn}

\begin{example}
Let $\mathcal{J}_{\mathcal{A}_m, \mathcal{D}_n}^{X,A^*(M),B^*(M)}$ be the requirement from Example \ref{mainexample} and fix a condition 
$p = (\sigma_p,\tau_p)$.   Let $q_0 = (\sigma_p^\frown\sigma,\tau_p)$ and $q_1 = (\sigma_p,\tau_p^\frown\tau)$ be a split pair below $p$.  
For finite sets $A$ and $B$, $q_0 \in \mathcal{J}_{\mathcal{A}_m, \mathcal{D}_n}^{X,A,B}$ if
\[
\exists a \in A \, \exists b \in B \, \big( \Phi_m^{X \oplus \sigma_p^\frown\sigma}(a) =  \Phi_m^{X \oplus \sigma_p^\frown\sigma}(b) = 1 \vee 
\Phi_n^{X \oplus \tau_p}(a) =  \Phi_n^{X \oplus \tau_p}(b) = 1 \big).
\]
For $A > |\tau_p|$, the second disjunct cannot occur by our use convention, and hence 
\[
q_0 \in \mathcal{J}_{\mathcal{A}_m, \mathcal{D}_n}^{X,A,B} \, \Leftrightarrow \, 
\exists a \in A \, \exists b \in B \, \big( \Phi_m^{X \oplus \sigma_p^\frown\sigma}(a) =  \Phi_m^{X \oplus \sigma_p^\frown\sigma}(b) = 1\big).
\]
Similarly, if $B > |\sigma_p|$, then 
\[
q_1 \in \mathcal{J}_{\mathcal{A}_m, \mathcal{D}_n}^{X,A,B} \, \Leftrightarrow \, 
\exists a \in A \, \exists b \in B \, \big( \Phi_n^{X \oplus \tau_p^\frown\tau}(a) =  \Phi_n^{X \oplus \tau_p^\frown\tau}(b) = 1\big).
\]
Thus the definition of $\mathcal{J}_{\mathcal{A}_m, \mathcal{D}_n}^{X}$ being essential below $p$ formalizes a 
notion of ``having lots of options to force large numbers into a potential solution to $M$".  Informally, the definition of 
$\mathcal{J}_{\mathcal{A}_m, \mathcal{D}_n}^{X,A^*(M),B^*(M)}$ being uniformly dense says that whenever there are lots of options to force large numbers into 
a potential solution to $M$, then there is an extension which forces numbers from both $A^*(M)$ and $B^*(M)$ into the potential solution.
\end{example}

\begin{defn}
We say an infinite sequence $p_0>p_1>\cdots$ of conditions \emph{satisfies} $\mathcal{K}^{X,A^*(M),B^*(M)}$ if either
\begin{itemize}
\item there are cofinitely many $p_i$ such that $\mathcal{K}^{X}$ is not essential below $p_i$, or
\item there is some $p_n\in\mathcal{K}^{X,A^*(M),B^*(M)}$.
\end{itemize}
\end{defn}

We have now made all the inductive hypotheses on $X$ precise and can give the formal construction of our generic 
sequence of conditions.  Let $\mathcal{K}_n^{X,A^*(M),B^*(M)}$, for $n \in \omega$, be a list of all requirements.  (As we will see below, it suffices for 
this list to consist of all requirements formed from pairs of half requirements.)

\begin{lem}
\label{lem:genericsequence}
There is a sequence of conditions $p_0 > p_1 > \cdots$ from $\mathbb{V}_e^X$ which satisfies every $\mathcal{K}_n^{X,A^*(M),B^*(M)}$.
\end{lem}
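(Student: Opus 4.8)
The plan is to build the sequence $p_0 > p_1 > \cdots$ by a finite-injury-free priority construction, diagonalizing against each requirement $\mathcal{K}_n^{X,A^*(M),B^*(M)}$ in order of priority, while maintaining the invariant that every $p_i$ lies in $\mathbb{V}_e^X$. We start with $p_0$ the pair of empty sequences, which lies in $\mathbb{V}_e^X$. At stage $n+1$, having defined $p_n \in \mathbb{V}_e^X$, we look at the highest-priority requirement $\mathcal{K}_n^{X,A^*(M),B^*(M)}$ and ask the purely $X \oplus A^*(M) \oplus B^*(M)$-decidable question: is $\mathcal{K}_n^X$ essential below $p_n$? If it is not essential below $p_n$, we simply set $p_{n+1}$ to be any proper extension of $p_n$ in $\mathbb{V}_e^X$ obtained from an arbitrary split pair below $p_n$ (Lemma \ref{split} guarantees one half stays in $\mathbb{V}_e^X$); in this case the first clause in the definition of ``satisfies'' will hold, because once $\mathcal{K}_n^X$ fails to be essential below some $p_i$ it fails below all further extensions (essentiality is downward closed in the ordering of conditions). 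If it is essential below $p_n$, then by the hypothesis that $\mathcal{K}_n^{X,A^*(M),B^*(M)}$ is uniformly dense, there is a split pair $q_0, q_1$ below $p_n$ with both $q_0, q_1 \in \mathcal{K}_n^{X,A^*(M),B^*(M)}$; we let $p_{n+1}$ be whichever of $q_0, q_1$ lies in $\mathbb{V}_e^X$, again using Lemma \ref{split}. Since $p_{n+1} \in \mathcal{K}_n^{X,A^*(M),B^*(M)}$ and this set is downward closed in $\mathbb{P}_e^X$, the second clause in ``satisfies'' holds for $\mathcal{K}_n^{X,A^*(M),B^*(M)}$.

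The two things to verify are that each $p_{n+1}$ is a proper extension (so the sequence is genuinely infinite and decreasing) and that each $p_{n+1}$ lies in $\mathbb{V}_e^X$. Properness is immediate in the essential case since a split pair below $p_n$ strictly extends $p_n$ on one side, and in the non-essential case we chose a strict extension by hand. Membership in $\mathbb{V}_e^X$ follows by induction: $p_0 \in \mathbb{V}_e^X$, and the inductive step is exactly Lemma \ref{split}, which says that from a split pair below a condition in $\mathbb{V}_e^X$ at least one of the two members is again in $\mathbb{V}_e^X$. Finally, for each fixed $n$, the requirement $\mathcal{K}_n^{X,A^*(M),B^*(M)}$ is handled at stage $n+1$: either $\mathcal{K}_n^X$ is essential below $p_n$, in which case $p_{n+1} \in \mathcal{K}_n^{X,A^*(M),B^*(M)}$ and hence (by downward closure) $p_m \in \mathcal{K}_n^{X,A^*(M),B^*(M)}$ for all $m \geq n+1$; or $\mathcal{K}_n^X$ is not essential below $p_n$, and since essentiality below a condition passes to all extensions, $\mathcal{K}_n^X$ is not essential below $p_i$ for any $i \geq n$, so the first clause of the definition of ``satisfies'' holds. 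In either case the sequence satisfies $\mathcal{K}_n^{X,A^*(M),B^*(M)}$.

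The one subtle point, and the place where care is needed, is the claim that essentiality is inherited by extensions: if $\mathcal{K}^X$ is not essential below $p_n$ then it is not essential below any $q \leq p_n$. This is because a split pair below $q$ is in particular a split pair below $p_n$ (the components $\sigma', \tau'$ witnessing a split pair below $q$ extend those below $p_n$ appropriately, or rather: $q_0 = (\sigma_q{}^\frown\sigma', \tau_q)$ and $q_1 = (\sigma_q, \tau_q{}^\frown\tau')$ below $q$ restrict, by taking $\sigma_q = \sigma_{p_n}{}^\frown\delta$ and $\tau_q = \tau_{p_n}{}^\frown\gamma$, to a split pair below $p_n$ with the same membership in $\mathcal{K}^{X,A,B}$). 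So the failure of the $\exists A \,\forall y\, \exists B$ condition below $p_n$ propagates downward. No genuine priority conflict arises because each requirement is attacked exactly once and the action taken is never later undone — this is a ``non-injury'' construction — so there is no obstacle of the usual injury-bookkeeping kind; the work is entirely in setting up the definitions so that this one-pass argument goes through, which the preceding subsection has already done.
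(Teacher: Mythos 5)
There is a genuine gap, and it is exactly the ``subtle point'' you flagged at the end: the claim that if $\mathcal{K}^X$ is not essential below $p_n$ then it is not essential below any $q \leq p_n$ is false, and the repair you sketch in parentheses does not go through. A split pair below $q$ has the form $q_0 = (\sigma_q{}^\frown\sigma', \tau_q)$, $q_1 = (\sigma_q, \tau_q{}^\frown\tau')$. Writing $\sigma_q = \sigma_{p_n}{}^\frown\delta$ and $\tau_q = \tau_{p_n}{}^\frown\gamma$, the pair $q_0, q_1$ is not a split pair below $p_n$ unless $\delta = \gamma = \emptyset$: $q_0$ does not leave the $\tau$-side of $p_n$ alone, and $q_1$ does not leave the $\sigma$-side alone. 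One can form a genuine split pair below $p_n$ by taking $s_0 = (\sigma_{p_n}{}^\frown\delta{}^\frown\sigma', \tau_{p_n})$ and $s_1 = (\sigma_{p_n}, \tau_{p_n}{}^\frown\gamma{}^\frown\tau')$, but there is no reason that $s_0, s_1$ inherit membership in $\mathcal{K}^{X,A,B}$ from $q_0, q_1$: the paper explicitly notes that $\mathcal{K}^{X,A,B}$ for finite $A,B$ need not be closed under extensions, and it is certainly not closed under truncations (which is what going from $q_0$ to $s_0$ does). So $\mathcal{K}^X$ may fail to be essential below $p_n$ and yet become essential below some later $p_m$.

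This breaks your one-pass construction. If $\mathcal{K}_n^X$ is not essential below $p_n$, you never revisit $\mathcal{K}_n$, but if $\mathcal{K}_n^X$ later becomes essential below infinitely many $p_i$, then the first disjunct of ``satisfies'' fails and nothing in your construction forces the second disjunct, so the sequence need not satisfy $\mathcal{K}_n^{X,A^*(M),B^*(M)}$. The paper instead re-evaluates \emph{all} requirements at every stage and acts on the least index $m$ such that $\mathcal{K}_m^X$ is essential below the current $p_n$ and not yet satisfied; a requirement that becomes essential later will then eventually become the highest-priority one needing attention (once all requirements of lower index are either permanently non-essential or already met) and will be dealt with at that time. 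The rest of your argument --- starting from the empty pair, using Lemma~\ref{split} to stay in $\mathbb{V}_e^X$, and invoking uniform density to find a split pair in $\mathcal{K}_m^{X,A^*(M),B^*(M)}$ --- matches the paper and is correct; it is only the scheduling and the appeal to monotonicity of (non-)essentiality that needs to change.
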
 

\begin{proof}
Let $p_0=(\sigma_0, \tau_0)$ where both $\sigma_0$ and $\tau_0$ are the empty sequence and note that $p_0 \in\mathbb{V}_e^X$.  
Given $p_n$, let $m$ be the least index such that $\mathcal{K}^{X}_m$ is essential 
below $p_n$ and for all $i \leq n$, $p_i \not \in \mathcal{K}^{X,A^*(M),B^*(M)}_m$.  By assumption $\mathcal{K}^{X,A^*(M),B^*(M)}_m$ is 
uniformly dense, so we may apply Lemma \ref{split} to obtain $p_{n+1}\leq p_n$ such that $p_{n+1} \in \mathcal{K}^{X,A^*(M),B^*(M)}_m$ and 
$p_{n+1} \in \mathbb{V}_e^X$.
\end{proof}

It remains to show that for either $G = \sigma = \cup \sigma_n$ or $G = \tau = \cup \tau_n$, $G$ satisfies the necessary inductive conditions: 
$X \oplus G$ does not compute a solution to 
$M$ and all requirements $\mathcal{K}^{X \oplus G, A^*(M),B^*(M)}$ are uniformly dense.  We do this in two steps.  First we explain the 
connection between satisfying half requirements and satisfying full requirements.  Second, we show that the satisfaction of the appropriate half 
requirements forces these conditions for $X \oplus G$.  

\begin{lem}
\label{lem:halftofull}
Let $\mathcal{R}^{X,A^*(M),B^*(M)}$ and $\mathcal{S}^{X,A^*(M),B^*(M)}$ be half-requirements and $p_0 > p_1 > \cdots$ 
be an infinite sequence of conditions with $p_n = (\sigma_n, \tau_n)$.  Let $\sigma=\bigcup_i\sigma_i$, $\tau=\bigcup_i\tau_i$.  
If $\mathcal{R}^{X}$ is 
essential in $\sigma$ and $\mathcal{S}^{X}$ is essential in $\tau$, then 
$\mathcal{J}^{X}_{\mathcal{R},\mathcal{S}}$ is essential below every $p_n$.
\end{lem}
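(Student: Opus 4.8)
The plan is to unwind the two notions of ``essential'' and reduce the lemma to one order-theoretic fact about $\sigma$ and $\tau$, together with the positivity (hence monotonicity) of $\mathcal{R}^{X,A,B}$ and $\mathcal{S}^{X,A,B}$ in the finite sets $A$ and $B$. The fact I would record first is: because $\sigma=\bigcup_i\sigma_i$ and $\tau=\bigcup_i\tau_i$ are infinite (which is built into ``$\mathcal{R}^X$ essential in $\sigma$'' and ``$\mathcal{S}^X$ essential in $\tau$'', these notions being stated only for infinite sequences) and because each $p_i=(\sigma_i,\tau_i)\in\mathbb{P}_e^X$ satisfies $\sigma_i\prec_e^X\tau_i$, every element occurring in $\sigma$ is $\prec_e^X$-below every element occurring in $\tau$. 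To see this, given $s=\sigma(a)$ and $t=\tau(b)$, pick $i$ with $|\sigma_i|>a$ and $|\tau_i|>b$ and chain $s=\sigma_i(a)\preceq_e^X\sigma_i(|\sigma_i|-1)\prec_e^X\tau_i(|\tau_i|-1)\preceq_e^X\tau_i(b)=t$, using that $\sigma_i$ ascends, $\tau_i$ descends, and $\sigma_i\prec_e^X\tau_i$. The consequence I will use is that for any $j$ and any $m>|\sigma_j|$, $m'>|\tau_j|$, writing $\sigma\upharpoonright m=\sigma_j^\frown\sigma'$ and $\tau\upharpoonright m'=\tau_j^\frown\tau'$ with $\sigma',\tau'$ nonempty, the pair $q_0=(\sigma\upharpoonright m,\tau_j)$, $q_1=(\sigma_j,\tau\upharpoonright m')$ is automatically a split pair below $p_j$: each $\prec_e^X$-comparison it requires, namely $\sigma\upharpoonright m\prec_e^X\tau_j$, $\sigma_j\prec_e^X\tau\upharpoonright m'$, and $\sigma'\prec_e^X\tau'$, compares a point of $\sigma$ with a point of $\tau$.

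With that in hand, fix a condition $p_j=(\sigma_j,\tau_j)$ from the sequence and an arbitrary $x$; the goal is a finite $A>x$ so that for every $y$ there are a finite $B>y$ and a split pair below $p_j$ inside $\mathcal{J}_{\mathcal{R},\mathcal{S}}^{X,A,B}$. First I would apply ``$\mathcal{R}^X$ essential in $\sigma$'' with parameters $|\sigma_j|$ and $x$ to obtain a finite $A_{\mathcal{R}}>x$ such that for every $y$ there are a finite $B>y$ and an $m>|\sigma_j|$ with $\sigma\upharpoonright m\in\mathcal{R}^{X,A_{\mathcal{R}},B}$, and symmetrically apply ``$\mathcal{S}^X$ essential in $\tau$'' with parameters $|\tau_j|$ and $x$ to obtain a finite $A_{\mathcal{S}}>x$ with the dual property. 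Set $A=A_{\mathcal{R}}\cup A_{\mathcal{S}}$, a finite set with $\min A>x$. Now fix $y$: the two applications yield finite $B_0,B_1>y$, an $m>|\sigma_j|$ with $\sigma\upharpoonright m\in\mathcal{R}^{X,A_{\mathcal{R}},B_0}$, and an $m'>|\tau_j|$ with $\tau\upharpoonright m'\in\mathcal{S}^{X,A_{\mathcal{S}},B_1}$. Put $B=B_0\cup B_1$; since $\mathcal{R}^{X,A,B}$ and $\mathcal{S}^{X,A,B}$ are monotone in $A$ and $B$ (the dependence on the set parameters being positive, by their defining form), $\sigma\upharpoonright m\in\mathcal{R}^{X,A,B}$ and $\tau\upharpoonright m'\in\mathcal{S}^{X,A,B}$. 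Taking $q_0=(\sigma\upharpoonright m,\tau_j)$ and $q_1=(\sigma_j,\tau\upharpoonright m')$ as in the first paragraph, this is a split pair below $p_j$, and $q_0\in\mathcal{J}_{\mathcal{R},\mathcal{S}}^{X,A,B}$ because its first component $\sigma\upharpoonright m$ lies in $\mathcal{R}^{X,A,B}$, while $q_1\in\mathcal{J}_{\mathcal{R},\mathcal{S}}^{X,A,B}$ because its second component $\tau\upharpoonright m'$ lies in $\mathcal{S}^{X,A,B}$. As $x$, $y$, and $j$ were arbitrary, $\mathcal{J}_{\mathcal{R},\mathcal{S}}^X$ is essential below every $p_n$.

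The step I expect to require the most care is the order-theoretic fact and its consequence, i.e.\ checking that $q_0,q_1$ are bona fide conditions forming a split pair below $p_j$ rather than just a pair of finite strings; this is the only place that uses both that the $p_i$ form a $\leq$-decreasing chain of genuine conditions (so $\sigma_i\prec_e^X\tau_i$ for every $i$) and that $\sigma$ and $\tau$ are infinite. The remainder is bookkeeping: lining up the quantifier pattern $\forall x\,\exists A\,\forall y\,\exists B$ that the two essentiality notions share, and then merging the two separate finite witness sets coming from the $\mathbb{A}$- and $\mathbb{D}$-side hypotheses into a single pair $(A,B)$ by upward closure under enlarging $A$ and $B$.
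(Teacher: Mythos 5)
Your proof is correct and takes essentially the same route as the paper's: merge the two $\mathbb{A}$-side and $\mathbb{D}$-side witness sets into $A=A_{\mathcal{R}}\cup A_{\mathcal{S}}$ and $B=B_0\cup B_1$, invoke positivity to place a long initial segment of $\sigma$ in $\mathcal{R}^{X,A,B}$ and of $\tau$ in $\mathcal{S}^{X,A,B}$, and then form the split pair $(\sigma\text{-segment},\tau_j)$, $(\sigma_j,\tau\text{-segment})$. Your explicit check that every element of $\sigma$ is $\prec_e^X$-below every element of $\tau$, so that these two strings really do form a split pair of conditions below $p_j$, fills in a verification that the paper leaves implicit.
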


\begin{proof}
Fix $p_n$.  To show $\mathcal{J}^{X}_{\mathcal{R},\mathcal{S}}$ is essential below every $p_n$, fix $x$.   Let $A_0 > x$ witness 
that $\mathcal{R}^{X}$ is essential in $\sigma$ and let $A_1 > x$ witness that $\mathcal{S}^{X}$ is essential in 
$\tau$.  $A_0 \cup A_1$ will be our witness that $\mathcal{J}^{X}_{\mathcal{R},\mathcal{S}}$ is essential below $p_n$.  

Fix $y$.  Let $B_0 > y$ witness that $\mathcal{R}^{X}$ is essential in $\sigma$ and let $B_1 > y$ witness that 
$\mathcal{S}^{X}$ is essential in $\tau$.  $B_0 \cup B_1$ will be our witness that 
$\mathcal{J}^{X}_{\mathcal{R},\mathcal{S}}$ is essential below $p_n$.

Fix $m_0 > n$ such that $\sigma_{m_0} \in \mathcal{R}^{X,A_0,B_0}$ and fix $m_1 > n$ such that $\tau_{m_1} \in \mathcal{S}^{X,A_1, B_1}$.  
Because the dependence on $A_0$, $A_1$, $B_0$ and $B_1$ in these sets is positive, it follows that 
$\sigma_{m_0} \in \mathcal{R}^{X,A_0 \cup A_1, B_0 \cup B_1}$ and 
$\tau_{m_1} \in \mathcal{S}^{X,A_0 \cup A_1, B_0 \cup B_1}$.  Thus the conditions $(\sigma_{m_0}, \tau_n)$ and $(\sigma_n, \tau_{m_1})$ 
are in $\mathcal{J}_{\mathcal{R},\mathcal{S}}^{X,A_0 \cup A_1, B_0 \cup B_1}$ and form a split pair below $p_n$.  
\end{proof}

Putting these pieces together, we obtain the following:
\begin{lem}\label{generic_iteration}
Suppose that for each pair of half-requirements $\mathcal{R}^{X,A^*(M),B^*(M)}$ and $\mathcal{S}^{X,A^*(M),B^*(M)}$, the 
requirement $\mathcal{J}^{X,A^*(M),B^*(M)}_{\mathcal{R},\mathcal{S}}$ is uniformly dense.  Then there is an infinite sequence $(\sigma_0,\tau_0)>(\sigma_1,\tau_1)>\cdots$ of conditions such that, setting $\sigma=\bigcup_i\sigma_i$ and $\tau=\bigcup_i\tau_i$, 
either $\sigma$ satisfies every $\mathbb{A}$-side half-requirement or $\tau$ satisfies every $\mathbb{D}$-side half-requirement.
\end{lem}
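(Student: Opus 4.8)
The plan is to run the construction of Lemma~\ref{lem:genericsequence}, obtaining a descending sequence $p_0 > p_1 > \cdots$ inside $\mathbb{V}_e^X$ that satisfies every full requirement $\mathcal{K}_n^{X,A^*(M),B^*(M)}$ from a list that includes all requirements of the form $\mathcal{J}^{X,A^*(M),B^*(M)}_{\mathcal{R},\mathcal{S}}$ for pairs of half-requirements. This is available precisely because of the hypothesis that each such $\mathcal{J}^{X,A^*(M),B^*(M)}_{\mathcal{R},\mathcal{S}}$ is uniformly dense. Set $\sigma = \bigcup_i \sigma_i$ and $\tau = \bigcup_i \tau_i$; since all $p_i \in \mathbb{V}_e^X$, $\sigma$ is an initial segment of an ascending solution to $\prec_e^X$ and $\tau$ an initial segment of a descending solution, and (after checking that infinitely many extensions occur on each side, which follows from the density of the ``lengthening'' requirements that force $|\sigma_n|,|\tau_n| \to \infty$) both $\sigma$ and $\tau$ are infinite solutions to $\prec_e^X$.

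First I would isolate the dichotomy: say $\sigma$ is \emph{good} if $\sigma$ satisfies every $\mathbb{A}$-side half-requirement, and $\tau$ is \emph{good} if $\tau$ satisfies every $\mathbb{D}$-side half-requirement. Suppose neither is good. Then there is an $\mathbb{A}$-side half-requirement $\mathcal{R}^{X,A^*(M),B^*(M)}$ not satisfied by $\sigma$ and a $\mathbb{D}$-side half-requirement $\mathcal{S}^{X,A^*(M),B^*(M)}$ not satisfied by $\tau$. By Definition~\ref{def:halfessential}, ``$\sigma$ does not satisfy $\mathcal{R}$'' means $\mathcal{R}^X$ \emph{is} essential in $\sigma$ and there is no $n$ with $\sigma\upharpoonright n \in \mathcal{R}^{X,A^*(M),B^*(M)}$; likewise $\mathcal{S}^X$ is essential in $\tau$ and no $\tau\upharpoonright n \in \mathcal{S}^{X,A^*(M),B^*(M)}$.

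Next I would apply Lemma~\ref{lem:halftofull}: since $\mathcal{R}^X$ is essential in $\sigma$ and $\mathcal{S}^X$ is essential in $\tau$, the full requirement $\mathcal{J}^{X}_{\mathcal{R},\mathcal{S}}$ is essential below every $p_n$. By uniform density of $\mathcal{J}^{X,A^*(M),B^*(M)}_{\mathcal{R},\mathcal{S}}$ and the construction, some $p_N \in \mathcal{J}^{X,A^*(M),B^*(M)}_{\mathcal{R},\mathcal{S}}$ — here I need the bookkeeping of Lemma~\ref{lem:genericsequence} to guarantee this requirement is eventually addressed, which holds because it is never dropped from the priority list once it becomes permanently essential and never satisfied. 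But $p_N \in \mathcal{J}^{X,A^*(M),B^*(M)}_{\mathcal{R},\mathcal{S}}$ means $\sigma_N \in \mathcal{R}^{X,A^*(M),B^*(M)}$ or $\tau_N \in \mathcal{S}^{X,A^*(M),B^*(M)}$; since $\mathcal{R}^{X,A^*(M),B^*(M)}$ and $\mathcal{S}^{X,A^*(M),B^*(M)}$ are closed under extensions and $\sigma_N \sqsubseteq \sigma$, $\tau_N \sqsubseteq \tau$, we get $\sigma\upharpoonright|\sigma_N| \in \mathcal{R}^{X,A^*(M),B^*(M)}$ or $\tau\upharpoonright|\tau_N| \in \mathcal{S}^{X,A^*(M),B^*(M)}$, contradicting the choice of $\mathcal{R},\mathcal{S}$.

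The main obstacle I anticipate is the bookkeeping link between ``$\mathcal{J}^{X}_{\mathcal{R},\mathcal{S}}$ is essential below every $p_n$'' and ``some $p_N$ actually lands in $\mathcal{J}^{X,A^*(M),B^*(M)}_{\mathcal{R},\mathcal{S}}$'': one must verify that a full requirement which is permanently essential and permanently unsatisfied cannot be starved by higher-priority requirements, i.e.\ every higher-priority requirement is itself eventually satisfied or eventually non-essential, so the priority construction reaches ours. A secondary point to handle carefully is confirming that $\sigma$ and $\tau$ are genuinely infinite — this should be folded in by including, among the half-requirements (or as auxiliary requirements), the trivial lengthening requirements, which are uniformly dense for elementary reasons, so that cofinally many split-pair choices extend each of the two sides.
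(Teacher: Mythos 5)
Your proof is correct and takes essentially the same approach as the paper: it builds the generic sequence via Lemma \ref{lem:genericsequence} and then uses Lemma \ref{lem:halftofull} to turn ``$\sigma$ fails some $\mathcal{R}$ and $\tau$ fails some $\mathcal{S}$'' into a violation of the fact that the sequence satisfies $\mathcal{J}_{\mathcal{R},\mathcal{S}}$. The only difference is presentational --- the paper argues directly (fix a failing $\mathcal{R}$, then show every $\mathcal{S}$ that is essential in $\tau$ must have $\tau_n\in\mathcal{S}$ for some $n$) whereas you run the symmetric proof by contradiction --- and the two side concerns you raise (priority starvation, infinitude of $\sigma$ and $\tau$) are legitimate observations the paper does not explicitly treat in this lemma, though the former is already internal to Lemma \ref{lem:genericsequence}.
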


\begin{proof}
Let $p_0 > p_1 > \cdots$ be chosen as in Lemma \ref{lem:genericsequence}.  Since each $\mathcal{J}^{X,A^*(M),B^*(M)}_{\mathcal{R},\mathcal{S}}$ 
is uniformly dense, this sequence satisfies every requirement $\mathcal{J}^{X,A^*(M),B^*(M)}_{\mathcal{R},\mathcal{S}}$.  If $\sigma$ satisfies 
every half-requirement, we are done.  So suppose there is some $\mathcal{R}^{X,A^*(M),B^*(M)}$ not satisfied by $\sigma$, and note that 
$\mathcal{R}^{X}$ must be essential in $\sigma$.  We show that $\tau$ satisfies every $\mathcal{S}^{X,A^*(M),B^*(M)}$.  

Fix $\mathcal{S}^{X,A^*(M),B^*(M)}$ and assume that $\mathcal{S}^{X}$ is essential in $\tau$ (otherwise this half requirement is trivially satisfied). By Lemma \ref{lem:halftofull}, $\mathcal{J}^{X}_{\mathcal{R},\mathcal{S}}$ is essential for every 
$(\sigma_n,\tau_n)$, and since the sequence of conditions satisfies $\mathcal{J}^{X,A^*(M),B^*(M)}_{\mathcal{R},\mathcal{S}}$, there must be 
some condition $(\sigma_n,\tau_n)\in\mathcal{J}^{X,A^*(M),B^*(M)}_{\mathcal{R},\mathcal{S}}$.  We cannot have $\sigma_n\in\mathcal{R}^{X,A^*(M),B^*(M)}$, since then $\sigma$ would satisfy $\mathcal{R}^{X,A^*(M),B^*(M)}$, so $\tau_n\in\mathcal{S}^{X,A^*(M),B^*(M)}$.
\end{proof}

We set $G = \sigma$ if $\sigma$ satisfies all the $\mathbb{A}$-side half requirements and we set $G = \tau$ otherwise.  
By Lemma \ref{generic_iteration}, $G$ satisfies every half requirement (on the appropriate side).  
It remains to show that $X \oplus G$ does not compute a solution to $M$ and that each requirement 
$\mathcal{K}^{X \oplus G, A^*(M), B^*(M)}$ is uniformly dense.  We work under the hypothesis that $G = \sigma$ and hence restrict our attention to 
$\mathbb{A}$-side half requirements.  The same arguments, with the obvious changes, give the corresponding results if $G = \tau$ working with 
$\mathbb{D}$-side half requirements.

\begin{lem}
\label{lem:nosolution}
If $G$ satisfies every $\mathcal{A}_m^{X,A^*(M),B^*(M)}$ half requirement, then $X \oplus G$ does not compute a solution to $M$.
\end{lem}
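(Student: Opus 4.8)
The plan is to suppose toward a contradiction that $X \oplus G$ does compute a solution to $M$, say $\Phi_m^{X \oplus G}$ is an infinite set that is a chain or antichain in $M$, and derive a contradiction from the fact that $G = \sigma$ satisfies the half requirement $\mathcal{A}_m^{X,A^*(M),B^*(M)}$. First I would recall from the earlier example that $\mathcal{A}_m^{X}$ is essential in $\sigma$ if and only if $\Phi_m^{X \oplus \sigma}$ is infinite; since we are assuming $\Phi_m^{X \oplus G}$ is an infinite set (and $G = \sigma$), this essentialness hypothesis holds. Therefore the first disjunct in the definition of ``$\sigma$ satisfies $\mathcal{A}_m^{X,A^*(M),B^*(M)}$'' fails, so the second disjunct must hold: there is some $n$ with $\sigma \upharpoonright n \in \mathcal{A}_m^{X,A^*(M),B^*(M)}$. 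Unwinding the definition of $\mathcal{A}_m$, this means there exist $a \in A^*(M)$ and $b \in B^*(M)$ with $\Phi_m^{X \oplus (\sigma \upharpoonright n)}(a) = \Phi_m^{X \oplus (\sigma \upharpoonright n)}(b) = 1$, and since $\sigma \upharpoonright n \sqsubseteq G$, by the use convention these computations persist, giving $\Phi_m^{X \oplus G}(a) = \Phi_m^{X \oplus G}(b) = 1$.

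Next I would observe that this forces both $a$ and $b$ into the set $\Phi_m^{X \oplus G}$. But $a \in A^*(M)$ means $a$ is below almost every element of $M$, and $b \in B^*(M)$ means $b$ is incomparable with almost every element of $M$; in particular, there is a threshold beyond which $a \preceq_M y$ for all $y$ past it, and similarly $b |_M y$ for all $y$ past it. Since $\Phi_m^{X \oplus G}$ is infinite, it contains some element $y$ large enough that $a \preceq_M y$ and $b |_M y$ (choosing $y$ past both thresholds and past $a, b$ themselves). Now if $\Phi_m^{X \oplus G}$ were a chain in $M$, then $a$ and $b$ being both in it would force $a$ and $b$ to be comparable; but actually the cleaner contradiction is: $a$ and $y$ are comparable (so consistent with a chain) but $b$ and $y$ are incomparable, so $\Phi_m^{X \oplus G}$ containing both $b$ and $y$ rules out its being a chain; and $a$ and $y$ being comparable (with $a \preceq_M y$) rules out its being an antichain. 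Hence $\Phi_m^{X \oplus G}$ is neither a chain nor an antichain, contradicting the assumption that it is a solution to $M$.

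I do not anticipate a genuine obstacle here; the argument is essentially a bookkeeping exercise combining (i) the satisfaction-of-half-requirement machinery built up in the preceding lemmas and examples, (ii) the persistence of finite-use computations along $G$, and (iii) the defining properties of $A^*(M)$ and $B^*(M)$ as stable-poset data. The one point requiring a little care is the quantifier structure: I must make sure that a \emph{single} pair $(a,b)$ witnesses membership of $\sigma \upharpoonright n$ in $\mathcal{A}_m^{X,A^*(M),B^*(M)}$ (which it does, since the defining formula is $\exists a \in A^*(M)\,\exists b \in B^*(M)$), and that the infinitude of $\Phi_m^{X\oplus G}$ — rather than merely its being defined on $a$ and $b$ — is what supplies the large element $y$ needed to separate the chain and antichain cases. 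A remark should note that the symmetric statement for $G = \tau$ and $\mathbb{D}$-side half requirements $\mathcal{D}_n^{X,A^*(M),B^*(M)}$ holds by the identical argument, as already flagged in the text preceding the lemma.
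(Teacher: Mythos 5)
Your argument is correct and follows essentially the same route as the paper: assume $\Phi_m^{X\oplus G}$ is infinite, show $\mathcal{A}_m^X$ is essential in $G$, and invoke satisfaction of the half requirement to extract $a\in A^*(M)$, $b\in B^*(M)$ with $\Phi_m^{X\oplus G}(a)=\Phi_m^{X\oplus G}(b)=1$. You go a bit further than the paper's written proof in two harmless ways — you cite the earlier example for the essentialness characterization rather than re-deriving it, and you spell out the concluding step (picking a large $y$ in the infinite set to rule out both chain and antichain), which the paper leaves implicit after the diagonalization; both are fine.
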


\begin{proof}
Fix an index $m$.  If $\Phi_m^{X \oplus G}$ is finite, then we are done.  So, suppose $\Phi_m^{X \oplus G}$ is infinite.  We claim that 
$\mathcal{A}_m^{X}$ is essential in $G$.  To prove this claim, fix $n$ and $x$.  Let $a_0 > x$ be such that 
$\Phi_m^{X \oplus G}(a_0) = 1$ and set $A = \{ a_0 \}$.  Fix $y$, let $b_0 > y$ be such that $\Phi_m^{X \oplus G}(b_0) = 1$ and 
set $B = \{ b_0 \}$.  Set $n' > n$ be greater than the use of either of these computations.  
By definition, $G \upharpoonright n' \in \mathcal{A}_m^{X,A,B}$ and hence $\mathcal{A}_m^{X}$ is essential in $G$.  

Since $G$ satisfies $\mathcal{A}_m^{X,A^*(M),B^*(M)}$, there must be an $n''$ such that $G \upharpoonright n'' \in 
\mathcal{A}_m^{X,A^*(M),B^*(M)}$.  Therefore, for some $a \in A^*(M)$ and $b \in B^*(M)$, we have 
$\Phi_m^{X \oplus G}(a) = \Phi_m^{X \oplus G}(b) = 1$, completing the proof.
\end{proof}

Finally, we show that for every index $e'$ such that $\prec_{e'}^{X \oplus G}$ is a stable-ish linear order, each requirement 
$\mathcal{K}^{X \oplus G, A^*(M), B^*(M)} \subseteq \mathbb{P}_{e'}^{X \oplus G}$ is uniformly dense.    
Recall that $\mathcal{K}^{X \oplus G, A^*(M), B^*(M)}$ is specified by an index $i$ such that 
\[
\mathcal{K}^{X \oplus G,A^*(M),B^*(M)} = \{ p \in \mathbb{P}_{e'}^{X \oplus G} \mid \exists \overline{a} \in A^*(M) \, \exists \overline{b} \in B^*(M) \, 
( \Phi_i^{X \oplus G}(p,\overline{a},\overline{b}) = 1) \}
\]
where $\Phi_i^{X \oplus G}$ is total.  As we construct $G$, we do not know which indices $e'$ will result in $\prec_{e'}^{X \oplus G}$ being a 
stable-ish linear order and, for each such index $e'$, which indices $i$ will correspond to requirements 
$\mathcal{K}^{X \oplus G, A^*(M), B^*(M)} \subseteq \mathbb{P}_{e'}^{X \oplus G}$.  Therefore, we define the following $\mathbb{A}$-side 
half requirements for every pair of indices $e'$ and $i$.  (Of course, we also define the corresponding $\mathbb{D}$-side half requirements and all 
proofs that follow work equally well on the $\mathbb{D}$-side.)

\begin{defn}
Fix $\sigma \in \mathbb{A}_e^X$ and an index $e'$.  For a pair of finite strings $q = (\sigma_q,\tau_q)$, we say $q \in \mathbb{P}_{e'}^{X \oplus \sigma}$ 
if for all $i < j < |\sigma_q|$, $\sigma_q(i) \prec_{e'}^{X \oplus \sigma} \sigma_q(j)$, 
for all $i < j < |\tau_q|$, $\tau_q(j) \prec_{e'}^{X \oplus \sigma} \tau_q(i)$ and 
$\sigma_p(|\sigma_p|-1) \prec_{e'}^{X \oplus \sigma} \tau_p(|\tau_p|-1)$.  

We say $\sigma$ \emph{forces} $q \not \in \mathbb{P}_{e'}^{X \oplus G}$ if either there are $i < j < |\sigma_q|$ such that 
$\sigma_q(j) \preceq_{e'}^{X \oplus \sigma} \sigma_q(i)$ or there are $i < j < |\tau_q|$ such that $\tau_q(i) \preceq_{e'}^{X \oplus \sigma} 
\tau_q(j)$ or $\tau_p(|\tau_p|-1) \preceq_{e'}^{X \oplus \sigma} \sigma_p(|\sigma_p|-1)$.  Note that this definition does not match the usual 
method for forcing the negation of a statement.
\end{defn}

By the use convention, $\mathbb{P}_{e'}^{X \oplus \sigma}$ is finite and we can $X$-computably quantify over this finite set.  Furthermore, we can 
$X$-computably determine whether $\sigma$ forces $q \not \in \mathbb{P}_{e'}^{X \oplus G}$.  

\begin{defn}
For each pair of indices $e'$ and $i$ and each $q = (\sigma_q, \tau_q)$, we define the $\mathbb{A}$-side half requirement 
$\mathcal{T}_{e',i,q}^{X,A^*(M),B^*(M)}$ to be the set of all $\sigma \in \mathbb{A}_e^X$ such that either 
$\sigma$ forces $q \not \in \mathbb{P}_{e'}^{X \oplus G}$ or there exist strings $\sigma'$ and $\tau'$ such that 
$q_0 = (\sigma_q^\frown \sigma', \tau_q)$ and $q_1 = (\sigma_q, \tau_q^\frown \tau')$ satisfy 
$q_0, q_1 \in \mathbb{P}_{e'}^{X \oplus \sigma}$ and 
\[
\exists \overline{a}_0, \overline{a}_1 \in A^*(M) \, \exists \overline{b}_0, \overline{b}_1 \in B^*(M) \, 
(\Phi_{i}^{X \oplus \sigma}(q_0,\overline{a}_0,\overline{b}_0) = \Phi_{i}^{X \oplus \sigma}(q_1,\overline{a}_1,\overline{b}_1) = 1)
\]
(i.e.~$\sigma$ forces the existence of a split pair below $q$ which lies in $\mathcal{K}^{X,A^*(M),B^*(M)}$).  
\end{defn}

Let $G$ be the generic constructed by 
our iterated forcing as in Lemma \ref{generic_iteration} and assume $G = \sigma$.  Thus, $G$ satisfies every $\mathbb{A}$-side half requirement 
$\mathcal{T}_{e',i,q}^{X,A^*(M),B^*(M)}$.

Fix an index $e'$ such that $\prec_{e'}^{X \oplus G}$ is a 
stable-ish linear order and fix an index $i$ specifying a requirement 
\[
\mathcal{K}^{X \oplus G,A^*(M),B^*(M)} = \{ q \in \mathbb{P}_{e'}^{X \oplus G} \mid \exists \overline{a} \in A^*(M) \, \exists \overline{b} \in B^*(M) \, 
(\Phi_i^{X \oplus G}(q,\overline{a},\overline{b}) = 1) \}
\]
The following lemma (and its $\mathbb{D}$-side counterpart) complete our verification of the properties of the iteration forcing.

\begin{lem}
\label{lem:uniformdense}
If $G$ satisfies $\mathcal{T}_{e',i,q}^{X,A^*(M),B^*(M)}$ for every $q$, then $\mathcal{K}^{X \oplus G,A^*(M),B^*(M)}$ is uniformly dense in 
$\mathbb{P}_{e'}^{X \oplus G}$.
\end{lem}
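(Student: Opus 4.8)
The plan is to unwind the definition of ``uniformly dense'' for $\mathcal{K}^{X \oplus G, A^*(M), B^*(M)}$ and reduce it, via the hypothesis that $G$ satisfies every $\mathcal{T}_{e',i,q}^{X,A^*(M),B^*(M)}$, to a statement about what the half requirement $\mathcal{T}_{e',i,q}^X$ being essential in $G$ buys us. Fix a condition $q \in \mathbb{P}_{e'}^{X \oplus G}$ below which $\mathcal{K}^{X \oplus G}$ is essential; we must produce a split pair below $q$ lying in $\mathcal{K}^{X \oplus G, A^*(M), B^*(M)}$. First I would observe that since $q \in \mathbb{P}_{e'}^{X \oplus G}$, the initial segment $G \upharpoonright n$ does not force $q \not\in \mathbb{P}_{e'}^{X \oplus G}$ for any $n$ (the use convention and stability of $\prec_{e'}^{X \oplus G}$ guarantee the ``order-correctness'' facts witnessing $q \in \mathbb{P}_{e'}$ are never contradicted by a longer initial segment of $G$). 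Therefore the first disjunct in the definition of $\mathcal{T}_{e',i,q}^{X,A^*(M),B^*(M)}$ can never be responsible for membership of $G \upharpoonright n$, so if $G$ satisfies $\mathcal{T}_{e',i,q}^{X,A^*(M),B^*(M)}$, then either $\mathcal{T}_{e',i,q}^X$ is not essential in $G$, or there is an $n$ with $G \upharpoonright n \in \mathcal{T}_{e',i,q}^{X,A^*(M),B^*(M)}$ via the \emph{second} disjunct, which directly yields a split pair $q_0, q_1$ below $q$ with $q_0, q_1 \in \mathcal{K}^{X \oplus (G \upharpoonright n), A^*(M), B^*(M)} \subseteq \mathcal{K}^{X \oplus G, A^*(M), B^*(M)}$ (again using the use convention so that $\Phi_i^{X \oplus (G \upharpoonright n)} = \Phi_i^{X \oplus G}$ on the relevant inputs, and that $q_0, q_1 \in \mathbb{P}_{e'}^{X \oplus (G\upharpoonright n)}$ persists to $\mathbb{P}_{e'}^{X \oplus G}$). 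In that case we are done.

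So the real content is to show that the remaining case cannot occur: if $\mathcal{K}^{X \oplus G}$ is essential below $q$, then $\mathcal{T}_{e',i,q}^X$ \emph{is} essential in $G$. This is the step I expect to be the main obstacle, and it is where the quantifier-alternation pattern in the definitions of ``essential'' is designed to match. I would prove the contrapositive by chasing quantifiers. Suppose $\mathcal{K}^{X \oplus G}$ is essential below $q$: for every $x$ there is a finite $A > x$ such that for every $y$ there is a finite $B > y$ and a split pair $r_0, r_1$ below $q$ with $r_0, r_1 \in \mathcal{K}^{X \oplus G, A, B}$. To verify $\mathcal{T}_{e',i,q}^X$ is essential in $G$, fix $n$ and $x$; apply the above to get $A > x$, and I claim $A$ is a witness. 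Fix $y$; apply the above to get $B > y$ and a split pair $r_0 = (\sigma_q^\frown \sigma', \tau_q)$, $r_1 = (\sigma_q, \tau_q^\frown \tau')$ below $q$ with $\Phi_i^{X \oplus G}(r_0, \overline a_0, \overline b_0) = \Phi_i^{X \oplus G}(r_1, \overline a_1, \overline b_1) = 1$ for some $\overline a_0, \overline a_1 \in A$ and $\overline b_0, \overline b_1 \in B$, and with $r_0, r_1 \in \mathbb{P}_{e'}^{X \oplus G}$. Now choose $m > n$ large enough that $G \upharpoonright m$ has length exceeding the uses of all these (finitely many) computations $\Phi_i^{X \oplus G}(r_0,\cdot,\cdot)$, $\Phi_i^{X \oplus G}(r_1,\cdot,\cdot)$ and exceeding all the oracle use needed to verify that $r_0, r_1 \in \mathbb{P}_{e'}^{X \oplus G}$ (these last are again finitely many $\prec_{e'}^{X \oplus G}$ comparisons among the finitely many entries of $r_0, r_1$). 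Then with $\sigma := G \upharpoonright m$ we have $r_0, r_1 \in \mathbb{P}_{e'}^{X \oplus \sigma}$ and $\Phi_i^{X \oplus \sigma}(r_0, \overline a_0, \overline b_0) = \Phi_i^{X \oplus \sigma}(r_1, \overline a_1, \overline b_1) = 1$, so taking $\sigma'$ and $\tau'$ as in $r_0, r_1$ witnesses exactly the second disjunct in the definition of $\mathcal{T}_{e',i,q}^{X, A, B}$; hence $G \upharpoonright m \in \mathcal{T}_{e',i,q}^{X, A, B}$ with $m > n$, as required. The subtlety to handle carefully is that the definition of $\mathcal{T}_{e',i,q}$ quantifies over pairs $\overline a_0, \overline a_1$ from $A^*(M)$ (two separate tuples), and that the positivity of the dependence on $A$ and $B$ lets us combine the finitely many tuples that appear into a single finite subset $A \subseteq A^*(M)$ — but here $A$ is already finite and given as the witness, so no combining is needed; I just need to keep straight that $\mathcal{T}_{e',i,q}^{X,A,B}$ asks for \emph{some} choice of two tuples from $A$ and two from $B$, which is exactly what we produced.

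Finally I would note the use convention is doing the bookkeeping that makes all the ``finitely many computations, so choose $m$ large'' steps legitimate: by the standing convention, whenever $\Phi_i^{X \oplus \sigma}(z)$ converges, both $z$ and the use are bounded by $|\sigma|$, so convergent computations with oracle $X \oplus G$ are witnessed by a sufficiently long initial segment $X \oplus (G \upharpoonright m)$, and conversely such computations persist as $m$ grows and as we pass to the full oracle $X \oplus G$. The same convention guarantees that membership of $q$ (or of a split pair $r_0, r_1$) in $\mathbb{P}_{e'}^{X \oplus G}$ is a $\Sigma^0_1(X \oplus G)$ fact witnessed by an initial segment, which is why ``$G \upharpoonright n$ forces $q \not\in \mathbb{P}_{e'}^{X \oplus G}$'' fails for all $n$ when $q$ genuinely lies in $\mathbb{P}_{e'}^{X \oplus G}$. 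With the two cases dispatched — the ``not essential'' case being impossible and the ``$G \upharpoonright n \in \mathcal{T}$'' case producing the desired split pair — we conclude that $\mathcal{K}^{X \oplus G, A^*(M), B^*(M)}$ is uniformly dense in $\mathbb{P}_{e'}^{X \oplus G}$, completing the proof (and, with the symmetric $\mathbb{D}$-side argument, the verification of the iteration forcing).
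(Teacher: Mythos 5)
Your proof is correct and follows essentially the same route as the paper's: both reduce the lemma to the claim that if $\mathcal{K}^{X \oplus G}$ is essential below $q$ then $\mathcal{T}_{e',i,q}^X$ is essential in $G$, prove that claim by the same quantifier chase (fix $x$, get $A$; fix $y$, get $B$ and the split pair; take $m$ past all the uses so that $G \upharpoonright m \in \mathcal{T}_{e',i,q}^{X,A,B}$), and then use the satisfaction hypothesis together with $q \in \mathbb{P}_{e'}^{X \oplus G}$ (so the first disjunct of $\mathcal{T}_{e',i,q}$ can never hold) to extract the split pair in $\mathcal{K}^{X \oplus G, A^*(M), B^*(M)}$. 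One small nit: the reason $G \upharpoonright n$ never ``forces $q \not\in \mathbb{P}_{e'}^{X\oplus G}$'' is just the use convention together with totality of $\Phi_{e'}^{X\oplus G}$; stability (i.e.\ stable-ishness) of $\prec_{e'}^{X\oplus G}$ plays no role there.
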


\begin{proof}
Fix $q \in \mathbb{P}_{e'}^{X \oplus G}$ and assume that $\mathcal{K}^{X \oplus G}$ is essential below $q$.  We claim that 
$\mathcal{T}_{e',i,q}^{X}$ is essential in $G$.  Before proving the claim, notice that this claim suffices to prove the lemma.  
Since $G$ satisfies $\mathcal{T}_{e',i,q}^{X,A^*(M),B^*(M)}$ and $\mathcal{T}_{e',i,q}^{X}$ is essential in $G$, there is an $n$ 
such that $G \upharpoonright n \in \mathcal{T}_{e',i,q}^{X,A^*(M),B^*(M)}$.  By the definition of $\mathcal{T}_{e',i,q}^{X,A^*(M),B^*(M)}$, 
since $q \in \mathbb{P}_{e'}^{X \oplus G}$, there must be 
a split pair $q_0,q_1 \in \mathbb{P}_{e'}^{X \oplus G \upharpoonright n}$ below $q$ and $\overline{a}_0, \overline{a}_1 \in A^*(M)$ and 
$\overline{b}_0, \overline{b}_1 \in B^*(M)$ such that $\Phi_i^{X \oplus G \upharpoonright n}(q_0,\overline{a}_0,\overline{b}_0) = 
\Phi_i^{X \oplus G \upharpoonright n}(q_1,\overline{a}_1,\overline{b}_1) = 1$.  Thus $q_0,q_1$ give the desired split pair 
below $q$ in $\mathcal{K}^{X \oplus G, A^*(M),B^*(M)}$.  

It remains to prove the claim that $\mathcal{T}_{e',i,q}^{X}$ is essential in $G$.  Fix $n$ and $x$.  Fix $A > x$ witnessing that 
$\mathcal{K}^{X \oplus G}$ is essential below $q$.  Fix $y$ and let $B > y$ and the split pair $q_0, q_1$ below $q$ be 
such that $q_0,q_1 \in \mathcal{K}^{X \oplus G, A, B}$.  Thus, 
\[
\exists \overline{a}_0, \overline{a}_1 \in A \, \exists \overline{b}_0, \overline{b}_1 \in B \, ( \Phi_i^{X \oplus G}(q_0, \overline{a}_0, \overline{b}_0) 
= \Phi_i^{X \oplus G}(q_1,\overline{a}_1, \overline{b}_1) = 1).
\]
Let $m > n$ be such that $m$ is greater than the uses of these computations and such that 
$q, q_0, q_1 \in \mathbb{P}_{e'}^{X \oplus G \upharpoonright m}$.  Then we have $G \upharpoonright m \in \mathcal{T}_{e',i,q}^{X,A,B}$ 
as required.
\end{proof}

\subsection{Ground Forcing}

In this section, we define the ground forcing to build $(M,A^*(M),B^*(M))$ such that $M$ does not compute a solution to itself (i.e.~it does not compute an 
infinite subset of $A^*(M)$ or $B^*(M)$) and each requirement $\mathcal{K}^{M,A^*(M),B^*(M)}$ is uniformly dense.      

Our ground forcing conditions $\mathcal{F}$ are triples $(F,A^*,B^*)$ satisfying
\begin{itemize}
\item $F$ is a finite partial order such that $\text{dom}(F)$ is an initial segment of $\omega$ and for all $x,y \in \text{dom}(F)$, $x \prec_F y$ implies 
$x < y$, and
\item $A^*\cup B^*\subseteq \text{dom}(F)$, $A^*$ is downwards closed under $\prec_F$, $B^*$ is upwards closed under 
$\prec_F$ and $A^* \cap B^* = \emptyset$.  
\end{itemize}
We say $(F,A^*,B^*) \leq (F_0,A^*_0,B^*_0)$ if:
\begin{itemize}
  \item $F$ extends $F_0$ as a partial order (i.e.~$\text{dom}(F_0) \subseteq \text{dom}(F)$ and for all $x,y \in \text{dom}(F_0)$, 
  $x \preceq_{F_0} y$ if and only if $x \preceq_F y$),
  \item $A^*_0\subseteq A^*$,
  \item $B^*_0\subseteq B^*$,
  \item whenever $a\in A^*_0$ and $x \in \text{dom}(F) \setminus \text{dom}(F_0)$, $a \preceq_F x$,
  \item whenever $b\in B^*_0$ and $x \in \text{dom}(F) \setminus \text{dom}(F_0)$, $b \not \preceq_F x$ (which implies $x$ is incomparable with $b$ since 
  $b < x$ and hence $x \not \preceq_F b$).
\end{itemize}

In what follows, we will typically write $x \in M$ rather than $x \in \text{dom}(M)$.  
We define a generic sequence of conditions $(F_0,A^*_0,B^*_0) > (F_1, A^*_1,B^*_1) > \cdots$ and let $M = \cup F_n$.  We need to 
satisfy the following properties:
\begin{enumerate}
\item[(C1)] For all $i$, there is an $n$ such that $i \in A^*_n \cup B^*_n$.  (Together with the definitions of our conditions and extensions of conditions, 
this property guarantees that $A^*(M) = \cup A^*_n$ and $B^*(M) = \cup B^*_n$ and that $M$ is stable.)
\item[(C2)] For all $e$, if $\Phi_e^M$ is infinite, then there are $a \in A^*(M)$ and $b \in B^*(M)$ such that $\Phi_e^M(a) = \Phi_e^M(b) = 1$.
\item[(C3)]  If $\prec_e^M$ is a stable-ish linear order and $\mathcal{K}^{M,A^*(M),B^*(M)} \subseteq \mathbb{P}_e^M$ is a requirement 
(as defined in the previous section), then for all $p \in \mathbb{P}_e^M$, either $\mathcal{K}^{M}$ is not essential below $p$ 
or there is a split pair $q_0,q_1$ below $p$ in $\mathcal{K}^{M,A^*(M),B^*(M)}$.
\end{enumerate}

The next three lemmas show that the appropriate set of conditions forcing these properties are dense.  For (C1), we use the following lemma.

\begin{lem} 
The set of $(F,A^*,B^*)$ such that $i\in A^*\cup B^*$ is dense in $\mathcal{F}$.  
\end{lem}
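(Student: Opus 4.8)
The plan is to show that given any condition $(F_0, A^*_0, B^*_0) \in \mathcal{F}$, we can extend it to a condition forcing $i \in A^* \cup B^*$. If $i \in A^*_0 \cup B^*_0$ already, there is nothing to do, so assume $i \notin A^*_0 \cup B^*_0$. There are two cases depending on whether $i \in \mathrm{dom}(F_0)$. If $i \notin \mathrm{dom}(F_0)$, first extend $F_0$ to a finite partial order $F'$ whose domain is the initial segment $\{0, 1, \dots, i\}$, adding the new points above every element of $A^*_0$ and incomparable with every element of $B^*_0$ (and, say, incomparable with each other, or ordered by $<$ — any choice respecting the constraint that $x \prec_{F'} y$ implies $x < y$ works); this $F'$ together with $A^*_0, B^*_0$ is a legitimate extension of $(F_0, A^*_0, B^*_0)$. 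So we may now assume $i \in \mathrm{dom}(F)$ for the current finite order $F$.

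The key step is then to decide whether to put $i$ into $A^*$ or into $B^*$. Recall $A^*$ must be downward closed under $\prec_F$, $B^*$ must be upward closed, and the two must remain disjoint. We would like to set $A^* = A^*_0 \cup \{x : x \preceq_F i\}$ and keep $B^* = B^*_0$; this makes $A^*$ downward closed, but we must check $A^* \cap B^* = \emptyset$, i.e.\ that no element $\preceq_F i$ lies in $B^*_0$. If some $b \in B^*_0$ satisfies $b \preceq_F i$, then since $B^*_0$ is upward closed we would instead try putting $i$ into $B^*$: set $B^* = B^*_0 \cup \{x : i \preceq_F x\}$, $A^* = A^*_0$, which is upward closed, and we must check that no element $\succeq_F i$ lies in $A^*_0$. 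The point is that these two bad cases cannot occur simultaneously: if some $b \in B^*_0$ had $b \preceq_F i$ and some $a \in A^*_0$ had $i \preceq_F a$, then by transitivity $b \preceq_F a$, contradicting $A^*_0 \cap B^*_0 = \emptyset$ together with downward/upward closure (indeed $b \preceq_F a$ and $a \in A^*_0$ downward closed gives $b \in A^*_0$, while $b \in B^*_0$). Hence at least one of the two assignments is legal.

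Finally I would verify that the resulting triple is genuinely $\leq (F_0, A^*_0, B^*_0)$: $F$ extends $F_0$ as a partial order by construction; $A^*_0 \subseteq A^*$ and $B^*_0 \subseteq B^*$ hold in both cases; and the two ``new point'' conditions — that every element of $A^*_0$ sits below, and every element of $B^*_0$ sits incomparable to, each point of $\mathrm{dom}(F) \setminus \mathrm{dom}(F_0)$ — were arranged when we extended $F_0$ to $F'$ above (and no points are added in the second phase, only elements are moved into $A^*$ or $B^*$, which does not affect these clauses). I do not expect any real obstacle here; the only mildly delicate point is the transitivity argument showing the two failure cases are mutually exclusive, which is what makes the density proof go through cleanly.
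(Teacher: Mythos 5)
Your proof is correct and follows essentially the same route as the paper: assume WLOG that $i$ is in the domain, then place $i$ into one of $A^*$ or $B^*$ by taking the downward/upward closure in $F$, using the closure properties of $A^*$ and $B^*$ to keep the two sets disjoint. The paper streamlines the case analysis by observing that if $i \notin A^*$ then (by downward closure of $A^*$) $i$ is already not below any element of $A^*$, so the upward closure of $\{i\}$ can always be added to $B^*$ directly — this makes your transitivity argument for mutual exclusivity of the two ``bad'' cases unnecessary, but both versions are sound.
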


\begin{proof}
Fix $(F,A^*,B^*)$ and $i \in \omega$.  Without loss of generality, we assume $i \in F$.  
If $i \not \in A^*$, then $i \not \preceq_F a$ for all $a \in A^*$ by the downwards closure of $A^*$.   
Let $F_0 = F$, $A_0^* = A^*$ and $B_0^* = B^* \cup \{ c \in F \mid i \preceq_F c \}$.  Then $i \in B_0^*$ and $(F_0,A_0^*,B_0^*)$ 
extends $(F,A^*,B^*)$.   
\end{proof}

For (C2), we use the following standard forcing definitions (with $G$ denoting the generic variable).  We say $F \Vdash \Phi_e^G \text{ is finite}$ if 
\[
\exists k \, \forall (F_0,A_0^*,B_0^*) \leq (F,A^*,B^*) \, \forall x \, ( \Phi_e^{F_0}(x) = 1 \rightarrow x \leq k).
\]
We say $F \Vdash \Phi_e^G \not \subseteq A^*(G) \wedge \Phi_e^G \not \subseteq B^*(G)$ if 
\[
\exists a \in A^* \, \exists b \in B^* \, (\Phi_e^F(a) = 1 \wedge \Phi_e^F(b) = 1).
\]

\begin{lem}
For each index $e$, the set of conditions which either force $\Phi_e^G$ is finite or force $\Phi_e^G \not \subseteq A^*(G) \wedge \Phi_e^G \not 
\subseteq B^*(G)$ is dense in $\mathcal{F}$.  
\end{lem}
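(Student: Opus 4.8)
The plan is a short two-step case analysis that exploits the one structural freedom of this forcing. By the definition of extension, any element $x$ adjoined to the domain after a condition $(F,A^*,B^*)$ is automatically $\preceq$-above every element already in $A^*$ and incomparable to every element already in $B^*$, and $\preceq$ always refines the numerical order. Hence if some later extension makes $\Phi_e$ converge to $1$ on such a fresh $x$, we remain free to commit $x$ to $A^*$ --- keep $B^*$ unchanged and enlarge $A^*$ to $A^*\cup\{y\mid y\preceq x\}$ --- or to commit $x$ to $B^*$ --- keep $A^*$ unchanged and enlarge $B^*$ to $B^*\cup\{y\mid x\preceq y\}$ --- and either choice yields a legitimate condition extending $(F,A^*,B^*)$ on which the computation at $x$ is untouched. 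The noninterference here is the crux: committing a fresh element on one side never forces us to disturb the other side.

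\textbf{Step 1.} Given $e$ and a condition $(F,A^*,B^*)$, ask whether some extension $(F_0,A_0^*,B_0^*)\le(F,A^*,B^*)$ has an element $b\in B_0^*$ with $\Phi_e^{F_0}(b)=1$. If not, I claim $(F,A^*,B^*)$ itself already forces $\Phi_e^G$ to be finite, with $k=\max(\mathrm{dom}(F))$: if some extension $(F_2,A_2^*,B_2^*)\le(F,A^*,B^*)$ had $\Phi_e^{F_2}(x)=1$ for some $x\notin\mathrm{dom}(F)$, then $x$ is fresh and, by the $B^*$-side freedom above, there would be a condition extending $(F,A^*,B^*)$ that places $x$ in its $B^*$-component and on which $\Phi_e$ still converges to $1$ at $x$, contradicting the case hypothesis. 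So no extension has $\Phi_e^{F_0}(x)=1$ with $x>k$.

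\textbf{Step 2.} Otherwise fix such a $(F_0,A_0^*,B_0^*)$ and $b\in B_0^*$, and ask whether some extension $(F_1,A_1^*,B_1^*)\le(F_0,A_0^*,B_0^*)$ has an element $a\in A_1^*$ with $\Phi_e^{F_1}(a)=1$. If yes, this $(F_1,A_1^*,B_1^*)$ is the condition we want: it extends $(F,A^*,B^*)$, and $a\in A_1^*$ together with $b\in B_0^*\subseteq B_1^*$ witness that it forces $\Phi_e^G\not\subseteq A^*(G)\wedge\Phi_e^G\not\subseteq B^*(G)$, because $F_1$ extends $F_0$ as a partial order and therefore preserves the convergent computation giving $\Phi_e^{F_0}(b)=1$. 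If no such extension exists, then symmetrically $(F_0,A_0^*,B_0^*)$ forces $\Phi_e^G$ finite, with $k=\max(\mathrm{dom}(F_0))$: an extension of $(F_0,A_0^*,B_0^*)$ making $\Phi_e$ converge to $1$ on a fresh $x$ could be converted --- now using the $A^*$-side freedom --- into an extension of $(F_0,A_0^*,B_0^*)$ that places $x$ in its $A^*$-component with $\Phi_e$ still converging to $1$ at $x$, contradicting the hypothesis of this subcase.

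In every branch we have produced an extension of $(F,A^*,B^*)$ in the advertised set, so the set is dense. The remaining verifications are routine: that enlarging $A^*$ by $\{y\mid y\preceq x\}$, respectively $B^*$ by $\{y\mid x\preceq y\}$, preserves downward (upward) closure, the disjointness $A^*\cap B^*=\emptyset$, and containment in the domain --- the only mildly delicate points being that $\{y\mid y\preceq x\}$ avoids $B^*$ because $x$ is incomparable to all of $B^*$, and that $\{y\mid x\preceq y\}$ avoids $A^*$ because $\preceq$ refines $<$ while $x$ exceeds the old domain --- and that the resulting triple extends the relevant condition (one may freely restrict the underlying poset to an initial segment of $\omega$ large enough to contain the old domain and the use of the computation). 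I expect the only place requiring genuine thought to be the dichotomy in Step 2: producing a $B^*$-witness and an $A^*$-witness inside a \emph{single} condition using that the two commitments do not interfere, versus reading off from their incompatibility that $\Phi_e^G$ is bounded.
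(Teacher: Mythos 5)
Your proposal is correct and takes essentially the same approach as the paper: the paper's version is the contrapositive packaging (assume no extension forces $\Phi_e^G$ finite, find a large $x$ with $\Phi_e^{F_0}(x)=1$, commit it to $A^*$, then repeat to commit a second large $y$ to $B^*$), whereas you organize it as a two-step case split, but the crux in both is identical — a fresh $x>\mathrm{dom}(F)$ can be committed to either $A^*$ (adding its $\preceq$-downward closure) or $B^*$ (adding its $\preceq$-upward closure) without disturbing the other side, after discarding whatever the auxiliary extension did to its own $A^*$- and $B^*$-components. One small notational remark: in Step 1 you write ``some extension has $b\in B_0^*$ with $\Phi_e^{F_0}(b)=1$,'' which is slightly weaker than ``there is a large $x$ that can be committed to $B^*$''; your argument actually establishes the contrapositive of the latter, so it is fine, but the equivalence of the two formulations (via the same noninterference principle) is being used silently.
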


\begin{proof}
Fix $e$ and $(F,A^*,B^*)$ and assume that $(F,A^*,B^*)$ has no extension forcing $\Phi_e^G$ is finite.  Fix $x > F$ and an extension 
$(F_0,A_0^*,B_0^*) \leq (F,A^*,B^*)$ such that $\Phi_e^{F_0}(x) = 1$.  Without loss of generality, we can assume that $A_0^* = A^*$ and 
$B_0^* = B^*$, so $x \not \in A_0^* \cup B_0^*$.  By the definition of extensions, we know $b \not \preceq_{F_0} x$ for all $b \in B_0^*$.  Therefore, 
the condition $(F_1,A_1^*,B_1^*)$ defined by $F_1 = F_0$, $A_1^* = A_0^* \cup \{ c \in F_0 \mid c \preceq_{F_0} x \}$ and 
$B_1^* = B_0^*$ is an extension of $(F,A^*,B^*)$ such that $x \in A_1^*$ and $\Phi_e^{F_1}(a) = 1$.  

Since $(F_1,A_1^*,B_1^*)$ does not force $\Phi_e^G$ is finite, we can repeat this idea.  Fix $y > F_0$ and an extension 
$(F_2,A_2^*,B_2^*) \leq (F_1,A_1^*,B_1^*)$ such that $\Phi_e^{F_2}(y) = 0$.  Again, without loss of generality, we can assume that 
$A_2^* = A_1^*$ and $B_2^* = B_1^*$, and hence that $y \not \preceq_{F_2} a$ for any $a \in A_2^*$.  The condition $(F_3,A_3^*,B_3^*)$ 
defined by $F_3 = F_2$, $A_3^* = A_2^*$ and $B_3^* = B_2^* \cup \{ c \in F_2 \mid y \preceq_{F_2} c \}$ is an extension of 
$(F,A^*,B^*)$ forcing $\Phi_e^G \not \subseteq A^*(G) \wedge \Phi_e^G \not \subseteq B^*(G)$.   
\end{proof}

We turn to (C3).  Fix an index $e$ for a potential stable-ish linear order $\prec_e^G$.  For $p = (\sigma,\tau)$, we say 
$(F,A^*,B^*) \Vdash p \in \mathbb{P}_e^G$ if $\sigma$ is a $\prec_e^F$ ascending sequence, $\tau$ is a $\prec_e^F$ descending 
sequence and $\sigma \prec_e^F \tau$.  We say $(F,A^*,B^*) \Vdash p \not \in \mathbb{P}_e^G$ if no extension of 
$(F,A^*,B^*)$ forces $p \in \mathbb{P}_e^G$.  Obviously, the set of conditions which either force $p \in \mathbb{P}_e^G$ or force 
$p \not \in \mathbb{P}_e^G$ is dense.  

Along with the index $e$, fix an index $i$ for a potential requirement $\mathcal{K}^{G,A^*(G),B^*(G)} \subseteq \mathbb{P}_e^G$.  That is, 
we want to consider the potential requirement 
\[
\{ q \in \mathbb{P}_e^G \mid \exists \overline{a} \in A^*(G) \, \exists \overline{b} \in B^*(G) \, ( \Phi_i^{G}(q,\overline{a},\overline{b}) = 1) \}.
\]
Suppose $(F,A^*,B^*) \Vdash p \in \mathbb{P}_e^G$ for $p = (\sigma,\tau)$.  We say 
\[
(F,A^*,B^*) \Vdash \text{ there is a split pair } q_0,q_1 \text{ below } p \text{ in } \mathcal{K}^{G,A^*(G),B^*(G)}
\]
if there are $\sigma'$ and $\tau'$ such that for $q_0 = (\sigma^\frown \sigma',\tau)$ and $q_1 = (\sigma, \tau^\frown \tau')$ we have 
\begin{itemize}
\item $(F,A^*,B^*) \Vdash q_0,q_1 \in \mathbb{P}_e^G$
\item $\sigma^\frown \sigma' \prec_e^F \tau^\frown \tau'$ and 
\item $\exists \overline{a}_0, \overline{a}_1 \in A^* \, \exists \overline{b}_0, \overline{b}_1 \in B^* \, (\Phi_i^F(q_0,\overline{a}_0,\overline{b}_0) = 
\Phi_i^F(q_1, \overline{a}_1, \overline{b}_1) = 1)$. 
\end{itemize} 
Finally, we say that 
\[
(F,A^*,B^*) \Vdash \mathcal{K}^{G} \text{ is not essential below } p
\]
if for any stable partial order $(\tilde{M}, A^*(\tilde{M}), B^*(\tilde{M}))$ with $\text{dom}(\tilde{M}) = \omega$ extending $(F,A^*,B^*)$ such that 
$x \prec_{\tilde{M}} y$ implies that $x < y$, $\prec_e^{\tilde{M}}$ is a stable-ish partial order and $\mathcal{K}^{\tilde{M}, A^*(\tilde{M}), B^*(\tilde{M})}$ 
is a requirement, we have that $\mathcal{K}^{\tilde{M}}$ is not essential below $p$.  

\begin{lem}
Fix a pair of indices $e$ and $i$ and let $\mathcal{K}^{G,A^*(G),B^*(G)}$ be the potential requirement specified by these indices.  
For any $p$, there is a dense set of $(F,A^*,B^*)$ such that either:
\begin{itemize}
\item $(F,A^*,B^*)\Vdash p\not\in\mathbb{P}^G_e$, or
\item $(F,A^*,B^*)\Vdash\mathcal{K}^{G}$ is not essential below $p$, or
\item $(F,A^*,B^*)\Vdash$ there is a split pair below $p$ in $\mathcal{K}^{G,A^*(G),B^*(G)}$.
\end{itemize}
\end{lem}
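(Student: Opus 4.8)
The plan is a three-way case analysis on $(F,A^*,B^*)$. First I would replace $(F,A^*,B^*)$ by an extension deciding whether $p\in\mathbb{P}^G_e$ (legitimate, since, as already observed, such conditions are dense); if that extension forces $p\not\in\mathbb{P}^G_e$ the first alternative holds and we are done, so I may assume $(F,A^*,B^*)\Vdash p\in\mathbb{P}^G_e$, say $p=(\sigma,\tau)$. Next I would ask whether some extension of $(F,A^*,B^*)$ forces that there is a split pair below $p$ in $\mathcal{K}^{G,A^*(G),B^*(G)}$; if so, that extension realizes the third alternative. All the work is in the remaining case, where \emph{no} extension of $(F,A^*,B^*)$ forces such a split pair, and I would show $(F,A^*,B^*)\Vdash\mathcal{K}^G$ is not essential below $p$, giving the second alternative.

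To do this, fix any stable $(\tilde M,A^*(\tilde M),B^*(\tilde M))$ with $\mathrm{dom}(\tilde M)=\omega$ extending $(F,A^*,B^*)$, with $x\prec_{\tilde M}y\to x<y$, with $\prec_e^{\tilde M}$ a stable-ish linear order and with $\mathcal{K}^{\tilde M,A^*(\tilde M),B^*(\tilde M)}$ a requirement, and suppose toward a contradiction that $\mathcal{K}^{\tilde M}$ is essential below $p$. The idea is to turn this essentiality into an extension of $(F,A^*,B^*)$ forcing a split pair below $p$ in $\mathcal{K}^{G,A^*(G),B^*(G)}$, contradicting the case hypothesis. Applying essentiality with $x$ at least $\max\mathrm{dom}(F)$ gives a finite $A>x$, all of whose elements are new (not in $\mathrm{dom}(F)$); applying it again with $y$ larger than every element of $A$ and of $\mathrm{dom}(F)$ gives a finite $B>y$, a split pair $q_0=(\sigma^\frown\sigma',\tau)$, $q_1=(\sigma,\tau^\frown\tau')$ below $p$, and tuples $\overline a_0,\overline a_1\subseteq A$, $\overline b_0,\overline b_1\subseteq B$ with $\Phi_i^{\tilde M}(q_0,\overline a_0,\overline b_0)=\Phi_i^{\tilde M}(q_1,\overline a_1,\overline b_1)=1$. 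I would then let $F_0$ be a long enough initial segment of $\tilde M$, viewed as a finite partial order, so that $\Phi_i^{F_0}(q_0,\overline a_0,\overline b_0)=\Phi_i^{F_0}(q_1,\overline a_1,\overline b_1)=1$, so that $q_0,q_1\in\mathbb{P}^{F_0}_e$ and $\sigma^\frown\sigma'\prec^{F_0}_e\tau^\frown\tau'$ (all of which hold in $\tilde M$ because $q_0,q_1$ is a split pair of conditions), and so that $\mathrm{dom}(F_0)$ contains all numbers in $\overline a_0\cup\overline a_1\cup\overline b_0\cup\overline b_1$ and in the sequences making up $q_0,q_1$. Since $\tilde M$ extends $(F,A^*,B^*)$, so does $F_0$ extend $F$ as a partial order, every element of $A^*$ is $\preceq_{F_0}$ every new element of $F_0$, and every element of $B^*$ is incomparable with every new element of $F_0$.

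Finally I would put $A_0^*=A^*\cup\{c\in\mathrm{dom}(F_0)\mid c\preceq_{F_0}a\text{ for some }a\in\overline a_0\cup\overline a_1\}$ and $B_0^*=B^*\cup\{c\in\mathrm{dom}(F_0)\mid b\preceq_{F_0}c\text{ for some }b\in\overline b_0\cup\overline b_1\}$ and check that $(F_0,A_0^*,B_0^*)$ is a legal condition extending $(F,A^*,B^*)$. The closure and extension clauses are routine: no new element of $F_0$ is $\preceq_{F_0}$-below an element of $A^*$ (by antisymmetry, since every element of $A^*$ is below every new element) and no new element is $\preceq_{F_0}$-above an element of $B^*$, so $A^*$ stays downward closed and $B^*$ stays upward closed in $F_0$, and the inequalities demanded by $\leq$ transfer from $\tilde M$. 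The step I expect to be the main obstacle is disjointness $A_0^*\cap B_0^*=\emptyset$. Granting the preceding facts, the only remaining way this could fail is via some $c$ with $b\preceq_{F_0}c$ and $c\preceq_{F_0}a$ for some $a\in\overline a_0\cup\overline a_1$ and $b\in\overline b_0\cup\overline b_1$, forcing $b\preceq_{F_0}a$; but every element of $A$ is at most $y$ while every element of $B$ exceeds $y$, so $a<b$ as natural numbers, and since $u\prec_{\tilde M}v\to u<v$ we cannot have $b\preceq_{F_0}a$. This is precisely the role of the alternating quantifiers in the definition of ``essential'': choosing $x$ large makes the witnesses new so that the already-committed sets $A^*$ and $B^*$ are not disturbed, and the subsequent freedom to take $B$ above $A$ lets the order-consistency of $\tilde M$ forbid the fatal comparability. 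With $(F_0,A_0^*,B_0^*)$ legal, $\sigma'$, $\tau'$ together with $\overline a_0,\overline a_1\in A_0^*$ and $\overline b_0,\overline b_1\in B_0^*$ witness that it forces a split pair below $p$ in $\mathcal{K}^{G,A^*(G),B^*(G)}$, contradicting the case hypothesis. Hence $\mathcal{K}^{\tilde M}$ is not essential below $p$, and as $\tilde M$ was arbitrary, $(F,A^*,B^*)\Vdash\mathcal{K}^G$ is not essential below $p$, which finishes the argument.
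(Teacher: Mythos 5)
Your proposal is correct and uses essentially the same ideas as the paper's proof. The one cosmetic difference is in how the dichotomy is set up: the paper splits on whether there exists a finite configuration $(F',A^*,B^*)$, sets $B_0>A_0>A^*\cup B^*$, and a split pair $q_0,q_1\in\mathcal{K}^{F',A_0,B_0}$, builds the extension $(F',A^*\cup A,B^*\cup B)$ in the positive case, and deduces non-essentiality in the negative case; you instead split directly on whether some extension forces a split pair, and fold the construction of the forcing extension into the reductio in the negative case (also taking $A>\max\mathrm{dom}(F)$ rather than merely $A>A^*\cup B^*$, a harmless strengthening that slightly streamlines the disjointness check). The technical heart is identical: use the quantifier alternation in the definition of essential to separate $A$ and $B$ far enough apart that adding their closures to $A^*$ and $B^*$ yields a legal condition.
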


\begin{proof}
Fix $(F,A^*,B^*)$ and $p=(\sigma,\tau)$.  If there is any $(F',A',B')\leq(F,A^*,B^*)$ forcing that $p\not\in\mathbb{P}^G_e$ then we are done. 
So assume not, and assume that $(F,A^*,B^*) \Vdash p \in \mathbb{P}_e^G$.  

Suppose there is an extension $(F',A^*,B^*)\leq (F,A^*,B^*)$, sets $B_0>A_0>A^*\cup B^*$ and a split pair $q_0,q_1$ below $p$ such that 
$(F',A^*,B^*)\Vdash q_0,q_1\in\mathcal{K}^{F',A_0,B_0}$.  Let $A$ be the downwards closure of $A_0$ in $F'$ and $B$ the upwards closure of $B_0$ in $F'$.

We claim that $A$ is disjoint from $B \cup B^*$.  Fix $x \in A$ and $a \in A_0$ such that $x \preceq_{F'} a$.  First, suppose for a contradiction 
that $x \in B^*$ and hence $x \in F$.  If $a \in F$, then $x \preceq_F a$ and hence $a \in B^*$ because $B^*$ is closed upwards in $F$.  But, 
$a \in A_0$ and $A_0 > B^*$ giving a contradiction.  If $a \not \in F$, then $a \in F' \setminus F$, so $x \not \preceq_{F'} a$ since $x \in B^*$ and 
$(F',A^*,B^*) \leq (F,A^*,B^*)$, again giving a contradiction.  Therefore, $x \not \in B^*$.  Second, suppose for a contradiction that $x \in B$.  Then 
$y \preceq_{F'} x$ for some $y \in B_0$ and hence $y \preceq_{F'} a$.  Therefore, $y \leq a$ which contradicts $B_0 > A_0$.  Therefore, $A$ is disjoint 
from $B \cup B^*$.  

We also claim that $A^*$ is disjoint from $B \cup B^*$.  Fix $x \in A^*$ and note that $x \not \in B^*$ since $(F,A^*,B^*)$ is a condition and hence 
$A^* \cap B^* = \emptyset$.   Suppose for a contradiction that $x \in B$.  There is a $y \in B_0$ such that 
$y \preceq_{F'} x$ and hence $y \leq x$, which contradicts $B_0 > A^*$.  Therefore, $A^*$ is disjoint from $B \cup B^*$.  

Taken together, our claims show that $A \cup A^*$ is disjoint from $B \cup B^*$.  Since $A \cup A^*$ is downwards closed and 
$B \cup B^*$ is upwards closed, $(F',A^*\cup A,B^*\cup B)\leq (F,A^*,B^*)$ is a condition forcing the existence of a split pair below $p$ in 
$\mathcal{K}^{G,A^*(G),B^*(G)}$.

If there is no such $(F',A^*,B^*)\leq (F,A^*,B^*)$, we claim $(F,A^*,B^*)$ already forces that $\mathcal{K}^{G}$ is not essential below $p$: 
let $\tilde M$ be any completion of $F$ to a stable partial ordering satisfying the appropriate conditions from above, and suppose 
$\mathcal{K}^{\tilde{M}}$ were essential below $p$.  
Then in particular, there would be some $A_0>\max (A^*\cup B^*)$, some $B_0>\max A_0$, and a split pair $q_0,q_1$ over $p$ such that 
$q_0,q_1\in\mathcal{K}^{\tilde M,A_0,B_0}$.  But then there would have been some finite restriction $F'=\tilde M\upharpoonright[0,m]$ witnessing 
this, contradicting our assumption.
\end{proof}

Having verified that any generic for the ground forcing satisfies (C1), (C2) and (C3), we can give the proof of Theorem \ref{thm:ADS}.  

\begin{proof}
We iteratively build a Turing ideal $\mathcal{I}$ containing a partial order $M$, containing a solution to every infinite linear order in $\mathcal{I}$, 
but not containing any solution to $M$.

Let $M$ be a partial ordering generic for the ground forcing.  $M$ is stable by (C1), $M$ does not compute a solution to itself by (C2) and 
for each stable-ish linear order $\prec_e^M$, each requirement $\mathcal{K}^{M,A^*(M),B^*(M)} \subseteq \mathbb{P}_e^M$ is 
uniformly dense by (C3).  Thus, we have established the initial conditions for the iterated forcing with $X = M$.  For a fixed index $e$ 
such that $\prec_e^M$ is a stable-ish linear order, let $G$ be a generic solution to $\prec_e^M$ obtained from the iteration forcing.  
By Lemmas \ref{generic_iteration}, \ref{lem:nosolution} and \ref{lem:uniformdense}, $M \oplus G$ does not compute a solution to 
$M$ and for every stable-ish linear order $\prec_{e'}^{M \oplus G}$, each requirement $\mathcal{K}^{M \oplus G,A^*(M),B^*(M)} 
\subseteq \mathbb{P}_{e'}^{M \oplus G}$ is uniformly dense.  

Iterating this process (and choosing stable-ish partial orders systematically to ensure that we eventually consider each one) gives an ideal 
$\mathcal{I}$ with the property that whenever $\prec$ is a linear order in $\mathcal{I}$, either $\prec$ is stable-ish, and 
therefore we added a solution to $\mathcal{I}$ at some stage, or $\prec$ is not stable-ish, and so a solution is computable from $\prec$, 
and therefore belongs to $\mathcal{I}$.  We have ensured that $M\in\mathcal{I}$ but that no solution to $M$ belongs to $\mathcal{I}$.

Therefore $(\omega,\mathcal{I})$ is a model of $\RCA + \ADS$, but is not a model of $\SCAC$.
\end{proof}

\section{$\EM$ background}
\label{sec:Erdos} 

In this section, we present proofs of Theorems \ref{thm:KLSW} and \ref{thm:DKLS}, which are restated below for convenience.  
We begin with some basic properties of infinite transitive tournaments and their transitive subsets.  We regard 
every tournament $T$ (including finite subtournaments) as containing elements $-\infty$ and $\infty$ with the property that 
$T(-\infty,x)$ and $T(x, \infty)$ hold for every $x \in T$.  If $T$ is a transitive tournament, then the $T$ relation defines a linear order on the domain of 
$T$ with $-\infty$ as the least element and $\infty$ as the greatest element.  We will denote this order by $\leq_T$, or by $\leq_F$ if $F$ is a finite transitive 
subset of some ambient (nontransitive) tournament $T$.

\begin{defn}
Let $T$ be an infinite tournament and let $a,b \in T$ be such that $T(a,b)$ holds.   The \emph{interval} $(a,b)$ is the set of all $x \in T$ such that 
both $T(a,x)$ and $T(x,b)$ hold.  That is, $(a,b)$ is the set of points ``between" $a$ and $b$ in $T$.
\end{defn}

\begin{defn}
\label{def:minimal}
Let $F \subseteq T$ be a finite transitive subset of an infinite tournament $T$.  For $a,b \in F$ such that $T(a,b)$ holds (i.e.~$a \leq_F b$), we say 
$(a,b)$ is a \emph{minimal interval of} $F$ if there is no $c \in F$ such that $T(a,c)$ and $T(c,b)$ both hold (i.e.~$b$ is the successor of $a$ in $\leq_F$).
\end{defn}

In the context of Definition \ref{def:minimal}, $(a,b)$ is an interval in $T$ well as in $F$.  
However, the fact that $(a,b)$ is a minimal interval of $F$ is a property of this interval in $F$.

\begin{defn}
Let $T$ be an infinite tournament and $F \subseteq T$ be a finite transitive set.  $F$ is \emph{extendable} if $F$ is a subset of some solution to $T$.  
A \emph{one point transitive extension of} $F$ is a transitive set $F \cup \{ a \}$ such that $a \not \in F$.  
\end{defn}

\begin{lem}
\label{lem:onepoint}
Let $T$ be an infinite transitive tournament and $F \subseteq T$ be a finite transitive set.  $F$ is extendable if and only if $F$ has infinitely many 
one point transitive extensions.
\end{lem}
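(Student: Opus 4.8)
The statement to prove is Lemma~\ref{lem:onepoint}: for a finite transitive subset $F$ of an infinite \emph{transitive} tournament $T$, $F$ is extendable if and only if $F$ has infinitely many one point transitive extensions. (I note that in a transitive tournament $T$, every finite subset is automatically transitive, and any infinite subset is a solution, so ``extendable'' here means $F$ extends to an infinite subset of $T$; but I will phrase the argument so it also carries over to the minimal-interval machinery developed below.) The plan is to prove the two implications separately, with the reverse direction being the substantive one.

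For the forward direction, suppose $F$ is extendable, so $F \subseteq S$ for some infinite $S \subseteq T$ that is a solution (in the transitive case, any infinite $S \supseteq F$). Then $S \setminus F$ is infinite, and for each $a \in S \setminus F$, the set $F \cup \{a\}$ is a subset of the transitive set $S$, hence transitive, and is a one point transitive extension of $F$. Distinct choices of $a$ give distinct extensions, so there are infinitely many. This direction is essentially immediate.

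For the reverse direction, suppose $F$ has infinitely many one point transitive extensions; I want to build an infinite solution containing $F$. The natural approach is a pigeonhole argument on minimal intervals. Enumerate $F = \{f_0 <_F f_1 <_F \cdots <_F f_{k}\}$ together with the endpoints $-\infty, \infty$, giving finitely many minimal intervals $(f_i, f_{i+1})$ of $F$ (including the end intervals $(-\infty, f_0)$ and $(f_k, \infty)$). Each point $a \notin F$ with $F \cup \{a\}$ transitive must lie in exactly one of these finitely many minimal intervals — this is where I use that $F$ and $F \cup \{a\}$ are both transitive, so $a$ sits at a well-defined position in the linear order $\leq_{F \cup \{a\}}$, namely inside some $(f_i, f_{i+1})$. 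Since there are infinitely many such $a$ and only finitely many minimal intervals, some minimal interval $(f_i, f_{i+1})$ contains infinitely many of them; call this infinite set $W$. Now I claim $F \cup W$ — or rather, $F$ together with an infinite transitive subset of $W$ — is a solution. In the transitive-tournament setting $W$ is automatically a transitive set and $F \cup W$ is transitive (every element of $W$ is comparable to every $f_j$ in the consistent way dictated by the interval, and elements of $W$ are mutually comparable since all of $T$ is), so $F \cup W$ is the desired infinite solution extending $F$.

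The main obstacle, and the step to be careful about, is the verification that $F \cup W$ is transitive, i.e.\ that the points accumulated in a single minimal interval genuinely fit together with $F$. In the stated generality ($T$ transitive) this is cheap because transitivity is inherited by all subsets; the reason the lemma is phrased with ``one point transitive extension'' at all is that the same bookkeeping is reused later for nontransitive ambient tournaments via minimal intervals, where one would instead iterate the pigeonhole (repeatedly passing to an infinite sub-interval) rather than take $W$ wholesale. For the present lemma I would keep the argument short: forward direction by restriction to the solution; reverse direction by the single pigeonhole step on the finitely many minimal intervals of $F$, concluding that infinitely many one point extensions force an infinite homogeneous-in-position set $W$ with $F \cup W$ transitive, hence a solution containing $F$.
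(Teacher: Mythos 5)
Your observation that the lemma trivializes as literally stated is correct, and it points at what is almost certainly a typo in the paper: the hypothesis should read ``infinite tournament,'' not ``infinite \emph{transitive} tournament.'' This is confirmed by the paper's own proof, which after the pigeonhole step applies the Erd\H{o}s--Moser theorem: having located a minimal interval $(a,b)$ with $T''=T'\cap(a,b)$ infinite, the paper notes that $T''$ is an infinite subtournament and therefore (as a black box) contains an infinite transitive subtournament $T'''$; since $T'''$ sits in a minimal interval of $F$, the set $F\cup T'''$ is transitive and is the desired solution. That step would be superfluous if $T$ were transitive.

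Compared to the paper's proof, your forward direction and your pigeonhole step are identical, and your overall plan is right. The difference is what happens inside the interval. Where the paper simply invokes $\EM$ on $T''$, you propose to iterate the pigeonhole — after picking one point $p\in T''$, split $T''$ by whether $T(p,q)$ or $T(q,p)$, note that one side is infinite and lies in one of the two sub-intervals $(a,p)$ or $(p,b)$, then recurse. That does work (one checks as you'd expect that $F\cup\{p,q\}$ remains transitive whenever $q$ is in the sub-interval), but it is a genuinely different route: in effect you are re-proving Erd\H{o}s--Moser rather than applying it. The paper's argument is shorter because $\EM$ is freely available at this point (the section is about \emph{computable} instances of $\EM$, so the classical theorem may be used as a black box), while your iterated version is self-contained at the cost of an induction you only gesture at rather than carry out. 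Had the hypothesis genuinely been ``transitive,'' your one-step argument would suffice and the statement would be vacuous exactly as you say; as it stands, to match the intended lemma you would need to flesh out the iterated pigeonhole or simply cite $\EM$ as the paper does.
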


\begin{proof}
If $F$ is extendable, then it clearly has infinitely many one point extensions.  Conversely, suppose $F$ has infinitely many one point extensions.  
Let $T'$ be the set of all $a \in T \setminus F$ such that $F \cup \{ a \}$ is transitive.    Since $F$ is transitive,  we can list $F$ in $\leq_F$ order as 
\[
-\infty <_F x_0 <_F x_1 <_F \cdots <_F x_k <_F \infty
\]
Because $T'$ is infinite and there are finitely many minimal intervals in $F$, there must be a minimal interval $(a,b)$ of $F$ such that $(a,b) \cap T'$ is 
infinite.  (Note that $a$ could be $-\infty$, if there are infinitely many elements $a \in T'$ such that $T(a,x_0)$ holds.  Similarly, $b$ could be 
$\infty$.)  Fix such a minimal interval $(a,b)$ in $F$ and let $T'' = T' \cap (a,b)$.  $T''$ is an infinite subtournament of $T$ and hence (viewing $T''$ 
as an infinite tournament), $T''$ contains an infinite transitive tournament $T'''$.  Since $T'''$ is contained in a minimal interval of $F$, $T''' \cup F$ is 
transitive, and hence is a solution to $T$ containing $F$.  
\end{proof}

\begin{lem}
\label{lem:singleton}
Let $T$ be an infinite tournament.  For any $a \in T$, there is a solution to $T$ containing $a$.
\end{lem}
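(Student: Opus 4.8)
The plan is to reduce Lemma~\ref{lem:singleton} to Lemma~\ref{lem:onepoint} by exhibiting, for a fixed $a \in T$, a finite transitive set containing $a$ which has infinitely many one point transitive extensions. The obvious candidate is the singleton $F = \{a\}$ itself (together with the formal elements $-\infty$ and $\infty$, relative to which $F$ is automatically transitive). So the first step is to observe that $\{a\}$ is a finite transitive subset of $T$, and then to argue that it has infinitely many one point transitive extensions.

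For the second step, I would note that a one point transitive extension of $\{a\}$ is just a set $\{a, b\}$ with $b \neq a$ such that $T$ restricted to $\{a,b\}$ is transitive. But transitivity on a two-element tournament is vacuous: for any $b \neq a$, exactly one of $T(a,b)$, $T(b,a)$ holds, and there is no third element to witness a failure of transitivity, so $\{a,b\}$ is always transitive. Since $T$ is infinite, there are infinitely many choices of $b$, and hence infinitely many one point transitive extensions of $\{a\}$. Therefore $\{a\}$ is extendable by Lemma~\ref{lem:onepoint}, i.e.~$\{a\}$ is a subset of some solution $S$ to $T$, and this $S$ is the desired solution containing $a$.

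One small point worth addressing carefully: Lemma~\ref{lem:onepoint} is stated for an infinite \emph{transitive} tournament, whereas here $T$ is an arbitrary infinite tournament. Reading the proof of Lemma~\ref{lem:onepoint}, though, the hypothesis ``transitive'' is never actually used — the argument passes to an infinite subtournament $T''$ inside a minimal interval and invokes $\EM$ (the existence of an infinite transitive subtournament) on it, which is exactly what is available here. So either I would apply Lemma~\ref{lem:onepoint} directly (treating the ``transitive'' in its statement as a typo or harmless hypothesis), or, to be safe, I would inline the one-paragraph argument: since $\{a\}$ has infinitely many one point transitive extensions, infinitely many of them land in a single minimal interval of $\{a\}$ (one of $(-\infty,a)$ or $(a,\infty)$), that interval contains an infinite subtournament, $\EM$ gives an infinite transitive subtournament $T'''$ of it, and $T''' \cup \{a\}$ is transitive and hence a solution to $T$ containing $a$.

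I do not expect a genuine obstacle here; the lemma is a routine corollary of Lemma~\ref{lem:onepoint}. The only thing requiring a moment's care is the mismatch in hypotheses just mentioned, and the trivial-but-needed observation that a two-element tournament is always transitive (so the ``infinitely many one point transitive extensions'' condition is automatic for a singleton). Everything else is immediate.
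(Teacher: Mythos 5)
Your proof is correct and is essentially identical to the paper's: set $F = \{a\}$, observe that any two-element set is automatically transitive so $F$ has infinitely many one point transitive extensions, and apply Lemma~\ref{lem:onepoint}. Your observation that the word ``transitive'' in the hypothesis of Lemma~\ref{lem:onepoint} must be a typo (the lemma would be vacuous otherwise, and its proof never uses it) is a sensible point of care that the paper silently elides.
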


\begin{proof}
Fix $a \in T$ and let $F = \{ a \}$.  For all $b \in T$, $\{ a,b \}$ is a transitive, so $F$ has infinitely many one point transitive extensions.  By 
Lemma \ref{lem:onepoint}, $F$ is extendable.  
\end{proof}

\begin{lem}
Let $T$ be an infinite transitive tournament and let $F \subseteq T$ be a finite transitive extendible set.  Cofinitely many of the one 
point transitive extensions of $F$ are extendable. 
\end{lem}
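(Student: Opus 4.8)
The plan is to argue by contrapositive on the structure of the one point transitive extensions of $F$, using Lemma \ref{lem:onepoint} as the main tool. Suppose $F$ is extendable but infinitely many one point transitive extensions $F \cup \{a\}$ are \emph{not} extendable. List $F$ in $\leq_F$-order as $-\infty <_F x_0 <_F \cdots <_F x_k <_F \infty$, giving finitely many minimal intervals. Each $a$ with $F \cup \{a\}$ transitive lies in exactly one minimal interval $(x_i, x_{i+1})$ of $F$ (allowing $x_i = -\infty$ or $x_{i+1} = \infty$), so by pigeonhole there is a single minimal interval $(c,d)$ of $F$ containing infinitely many $a$ for which $F \cup \{a\}$ is transitive but not extendable.

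The key step is then to analyze $F \cup \{a\}$ for such an $a \in (c,d)$. Since $(c,d)$ is a minimal interval of $F$, the set $F \cup \{a\}$ is transitive with $a$ the $\leq$-successor of $c$ and predecessor of $d$; its minimal intervals are exactly those of $F$ except that $(c,d)$ is replaced by $(c,a)$ and $(a,d)$. By Lemma \ref{lem:onepoint} applied to the non-extendable set $F \cup \{a\}$, it has only finitely many one point transitive extensions. Now I would observe that any $b \in (c,d) \cap T''$ — where $T''$ denotes the (infinite) set of points $b$ in $(c,d)$ with $F \cup \{b\}$ transitive — either lies in $(c,a)$, lies in $(a,d)$, or satisfies $a \leq_T b$ and $b \leq_T a$ patterns that must be sorted. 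The point is that for $b \in T''$ in the interval $(c,d)$, whether $F \cup \{a,b\}$ is transitive is governed solely by the tournament edge between $a$ and $b$ (since $b$ is already correctly related to every element of $F$, and $a$ lies strictly between $c$ and $d$): transitivity of $F \cup \{a,b\}$ fails only if the $a$–$b$ edge creates a 3-cycle with some pair already in $F \cup \{a\}$, but since $a, b$ are both in the minimal interval $(c,d)$ of $F$, no such cycle with two $F$-elements is possible, and with $c$ or $d$ it is impossible because both $a$ and $b$ lie strictly inside $(c,d)$. Hence $F \cup \{a, b\}$ is transitive for \emph{every} $b \in T'' \cap (c,d)$, so $a$ has infinitely many one point transitive extensions.

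This contradicts Lemma \ref{lem:onepoint} applied to $F \cup \{a\}$: having infinitely many one point transitive extensions would make $F \cup \{a\}$ extendable, contrary to our choice of $a$. Therefore only finitely many one point transitive extensions of $F$ can be non-extendable, which is the claim.

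The main obstacle I anticipate is the bookkeeping in the middle step: verifying carefully that for $a, b$ both in the minimal interval $(c,d)$ of $F$, the set $F \cup \{a,b\}$ is automatically transitive regardless of the $a$–$b$ edge. This requires checking that no 3-cycle can form — among $\{a,b\}$ and two elements of $F$, among $\{a,b\}$ and one element of $F$ — and the cleanest way is to note that $F \cup \{a\}$ is transitive with $b$ already in a fixed position relative to $F$ (inside $(c,d)$), so $b$ is comparable-consistently to all of $F$; thus the only edge whose orientation is "free" is $a$–$b$, and in a tournament on a set that is transitive after deleting any single vertex, adding back that vertex with either orientation of its one ambiguous edge cannot create a cycle. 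Alternatively, one can phrase this via intervals directly: $T''' := T'' \cap (c,d)$ is an infinite set of points each compatible with $F$, and $a \in T'''$, so the argument of Lemma \ref{lem:onepoint}'s proof shows $F \cup \{a\}$ sits inside a solution built from $T'''$. I would likely present it in this latter, more streamlined form to avoid the case analysis.
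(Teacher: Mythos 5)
Your proof is correct, and it takes a genuinely different route from the paper's. After the shared pigeonhole step (a minimal interval $(c,d)$ of $F$ containing infinitely many non-extendable one-point extensions), the paper applies the Erd\"{o}s--Moser theorem directly to the infinite subtournament of bad points inside $(c,d)$ to extract an infinite transitive $T'''$, and then observes that $F\cup T'''$ is a solution that contains infinitely many of the supposedly non-extendable points. You instead fix a single bad $a$, make the elementary observation that any two points of $(c,d)$ co-extend $F$ regardless of how the tournament edge between them is oriented (for any $f\in F$, either $f\leq_F c$ so $f$ beats both, or $f\geq_F d$ so $f$ loses to both, and in either case no $3$-cycle through $f$ can form), conclude that $F\cup\{a\}$ has infinitely many one-point transitive extensions, and then cite Lemma \ref{lem:onepoint} as a black box. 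This is a nice modularization: Erd\"{o}s--Moser appears only once, inside Lemma \ref{lem:onepoint}, and the new content is a short $3$-cycle check. One caution about your closing remark: the ``streamlined'' alternative you say you would present instead---``the argument of Lemma \ref{lem:onepoint}'s proof shows $F\cup\{a\}$ sits inside a solution built from $T'''$''---is not quite what that proof gives when run on $F\cup\{a\}$ (it would re-pigeonhole into the sub-intervals $(c,a)$ and $(a,d)$, and an arbitrary infinite transitive subset of $T'''$ need not cohere with $a$), so you should keep the first formulation, which is the one that is actually airtight.
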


\begin{proof}
Suppose for a contradiction that there are infinitely many $x \in T \setminus F$ such that $F \cup \{ x \}$ is transitive but not extendable.  Let 
$T'$ be the set of all such $x$.  As in the proof of Lemma \ref{lem:onepoint}, there must be a minimal interval $(a,b)$ of $F$ such that 
$T' \cap (a,b)$ is infinite.  Fix such an interval $(a,b)$ and let $T'' = T' \cap (a,b)$.  $T''$ is an infinite subtournament of $T$, so there is an infinite 
transitive set $T''' \subseteq T''$.  $F \cup T'''$ is a solution to $T$ containing $F$ as well as infinitely many point from $T'$ giving the desired contradiction.    
\end{proof}

\begin{thm}[Kach, Lerman, Solomon and Weber]
There is a computable infinite tournament $T$ with no infinite $\Sigma^0_2$ transitive subtournaments.
\end{thm}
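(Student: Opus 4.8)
The goal is to build a computable infinite tournament $T$ diagonalizing against every potential $\Sigma^0_2$ infinite transitive subtournament. The plan is to use a forcing/priority construction where the conditions are finite transitive extendible subsets $F$ of the tournament being built, ordered by one-point transitive extensions as in Lemma \ref{lem:onepoint}. We build $T$ by a finite-extension argument (a $\emptyset''$-computable construction of a computable tournament), meeting for each index $e$ a requirement
\[
\mathcal{N}_e : \text{if } W_e^{\emptyset'} \text{ codes an infinite set } S_e, \text{ then } S_e \text{ is not transitive in } T.
\]
Since every $\Sigma^0_2$ set is $\emptyset'$-c.e., ranging over all $e$ suffices. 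The key structural fact we will exploit is that the tournament is still under construction: when we act for $\mathcal{N}_e$ we have only committed $T$ on a finite initial segment, so we have freedom to place incoming elements either "above" or "below" previously placed ones, and in particular to create a non-transitive triple among any three elements that the opponent's set $S_e$ is forced to contain.

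\textbf{Key steps.} First I would set up the forcing: a condition is a pair $(F, n)$ where $F$ is a finite transitive subset of $\{0,\dots,n-1\}$ that we have decided is extendible, together with a promise that all future elements $\geq n$ will be added inside one fixed minimal interval of $F$ (this keeps $F$ extendible forever, by the lemmas above, and is exactly the mechanism used in the proof of Lemma \ref{lem:onepoint}). Second, for the requirement $\mathcal{N}_e$: ask $\emptyset''$ whether $W_e^{\emptyset'}$ enumerates at least three elements $a <_{?} b <_{?} c$ that, given the current finite commitment on $T$, are still "unconstrained" relative to each other — i.e.\ the tournament edges among $a,b,c$ have not yet been decided. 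If so, we decide those three edges to form a 3-cycle ($T(a,b), T(b,c), T(c,a)$), which forces $\{a,b,c\}$, hence $S_e$, to be non-transitive. If $\emptyset''$ says no such triple will ever appear, then $S_e$ is finite and the requirement is vacuous. Third, interleave these actions with the "extendibility management" of the condition: each time we act we may need to choose a new minimal interval to funnel future points into, and we must make sure the tournament $T$ we produce is total and computable — this is handled by having the $\emptyset''$-oracle construction output, at stage $s$, all tournament edges among $\{0,\dots,s-1\}$, with the guarantee that later stages never revise earlier edges.

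\textbf{Main obstacle.} The delicate point is the interaction between the diagonalization and keeping the tournament both \emph{computable} and possessing \emph{infinite transitive subsets at all} — we are not building a tournament with no infinite transitive subtournament (that is impossible by $\EM$ being true), only one with no $\Sigma^0_2$ such set, so the construction must leave enough room that a (necessarily non-$\Sigma^0_2$) solution still exists, while every $\Sigma^0_2$ candidate gets killed. Concretely, the hard part is showing that when we place a 3-cycle among $a, b, c$ to defeat $\mathcal{N}_e$, we have not destroyed the extendibility of our current condition $F$: this is why we insist, via the minimal-interval promise, that $a,b,c$ all land strictly inside one minimal interval of the committed part $F$, so that no edge we set touches $F$ and $F$ remains transitive and extendable by Lemma \ref{lem:onepoint}. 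A secondary subtlety is that the opponent controls the relative $\emptyset'$-order in which elements of $S_e$ appear, so we cannot assume the three witnesses arrive in any particular $T$-order; we must be prepared to set the 3-cycle orientation \emph{after} seeing which three numbers they are, which is exactly why the finite-extension construction, rather than a fixed priority tree with frozen commitments, is the right framework here.
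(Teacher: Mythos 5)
Your construction uses a $\emptyset''$-oracle to decide which edges to set, and so the tournament it produces is in general $\emptyset''$-computable, not computable. The statement requires a \emph{computable} $T$, and nothing in your plan restores that: the decision ``do there exist three unconstrained $a,b,c$ that $W_e^{\emptyset'}$ will enumerate?'' genuinely affects which edges the construction outputs, so the resulting $T$ inherits the complexity of the oracle you consulted. The missing idea is the reduction that the paper makes in its first line: every infinite $\Sigma^0_2$ set contains an infinite $\Delta^0_2$ subset, so it suffices to defeat every total $\Delta^0_2$ set $D_e = \lim_s \varphi_e(\cdot,s)$. That reduction is exactly what makes a computable construction possible, because at stage $s$ one can consult the current approximation $\varphi_e(\cdot,s)$ with no oracle at all and act on it, accepting that some actions will be undone; this is a finite injury priority argument rather than a forcing argument.

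A second gap, independent of the first, is in the diagonalization itself. You want to find three elements of $S_e$ whose mutual edges are \emph{all} still undecided, but the opponent controls the $\emptyset'$-enumeration of $S_e$ and can wait to reveal a new element until all of its edges to previously revealed elements of $S_e$ have already been committed by the construction (this is possible since $W_e^{\emptyset'}$ is $\Sigma^0_2$ and its enumeration order is unconstrained). In that case no mutually unconstrained triple ever appears, yet $S_e$ can still be infinite, so your fallback conclusion ``then $S_e$ is finite and the requirement is vacuous'' is false. The paper sidesteps this entirely: at stage $s$ it never needs a mutually unconstrained triple. The only unconstrained vertex is the new element $s$ itself, and the two other vertices are drawn from a reserved bank $\{x_0 < \cdots < x_{2e+1}\}$ of currently-believed members of $D_e$ whose mutual edges are already committed; the edges $T(s,x_i)$ and $T(x_j,s)$ are then set to complete a $3$-cycle. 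Once $\varphi_e$ stabilizes on the bank, \emph{every} later $s$ forms a cycle with some pair from it, so the bank has only finitely many one-point transitive extensions and, by Lemma~\ref{lem:onepoint}, is not extendible — hence $D_e$ cannot be an infinite transitive set. The bank has size $2e+2$ precisely so that the $e$ higher-priority requirements cannot exhaust it.
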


\begin{proof}
Since every infinite $\Sigma^0_2$ set contains an infinite $\Delta^0_2$ subset, it suffices to construct an infinite computable tournament 
$T$ with no infinite $\Delta^0_2$ transitive subtournaments.  We build $T$ in stages to meet the 
following requirements. 
\[
R_e: \text{If } D_e(x) = \lim_s \varphi_e(x,s) \text{ exists for every } x, \text{ then } D_e \text{ is finite or } D_e \text{ is not transitive.}
\]  

As stage $s$, we determine whether $T(x,s)$ or $T(s,x)$ holds for each $x < s$ by acting in substages $e < s$.   At 
substage $e$, $R_e$ chooses the $2e+2$ least elements $x_0 < x_1 < \cdots < x_{2e+1}$ (less 
than $s$) that $\varphi_e$ currently claims are in $D_e$.  (If there are not $2e+2$ many such elements, then we proceed to the 
next substage.)  Let $x_i$ and $x_j$ be the least from this set which have not been chosen as witnesses by a higher priority requirement 
at this stage and assume that $T(x_i,x_j)$ holds.  Declare that $T(s,x_i)$ and $T(x_j,s)$ hold so that $\{ x_i, x_j, s \}$ is not transitive.  
Proceed to the next substage.  When all substages are complete, declare $T(x,s)$ for any $x < s$ for which we have not declared either 
$T(x,s)$ or $T(s,x)$.  This ends stage $s$.  

It is clear that this process defines a computable infinite tournament $T$.  To see that $R_e$ is met, assume that $D_e(x)$ is defined for all 
$x$.  Let $x_0 < x_1 < \cdots < x_{2e+1}$ be least such that $D_e(x_i) = 1$ and let $s$ be such that $\varphi_e$ claims that each 
$x_i$ is in $D_e$ for all $t \geq s$.  For every $t \geq s$, $R_e$ chooses a pair of element from $\{ x_0, \ldots, x_{2e+1} \}$ to make 
a cycle with $t$.  Therefore, $\{ x_0, \ldots, x_{2e+1} \}$ has only finitely many one point transitive extensions and hence is not a subset of 
any infinite transitive subtournament.  
\end{proof}

\begin{thm}[Dzhafarov, Kach, Lerman and Solomon]
There is a computable infinite tournament $T$ with no infinite hyperimmune-free transitive subtournaments.  
\end{thm}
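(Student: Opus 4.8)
The plan is to build a computable infinite tournament $T$ by a finite-extension construction in which, at each stage, we guarantee that every finite extendible transitive set has infinitely many extendible one-point transitive extensions, but we force the ``growth rate'' of the extendible extensions to be dominated by a fixed computable function, so that any infinite transitive subtournament computes (relative to no oracle) a function dominating all its own branches — contradicting hyperimmune-freeness. Concretely, the requirements will be
\[
R_e:\ \text{if }\Phi_e^{S_e}\text{ is total for some infinite transitive }S_e\subseteq T,\text{ then }\Phi_e^{S_e}\text{ is dominated by a fixed computable }g.
\]
Recall a set is hyperimmune-free iff every function it computes is dominated by a computable function; so it suffices to arrange that every infinite transitive subtournament of $T$ is, uniformly, ``thin'' enough that any of its Turing reductions is majorized by one computable function $g$ chosen in advance (or by finitely many, which we can then take a max over, recursively).

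First I would fix a fast-growing computable function $g$ and, during the stage-$s$ construction of $T$ on $\{0,\dots,s\}$, only ever let a new point $s$ be placed ``transitively compatibly'' with a finite transitive set $F$ already under consideration if $\min F$ and $s$ are not too far apart relative to $g$ — roughly, I would ensure that any extendible $F$ of size $k$ has its first $k$-many relevant extensions confined to an interval $[0, g(k)]$ of the ambient order, so that an infinite transitive $S$ with $S\upharpoonleft k$ determined must have its $(k{+}1)$st element below $g(\text{something computable in }k)$. Then, given that $S$ computes $\Phi_e^S$ with use bounded by the $k$th element of $S$, and the $k$th element of $S$ is below a computable bound in $k$, the value $\Phi_e^S(k)$ is itself bounded by a computable function of $k$ — uniformly in $e$ by a standard majorization argument (quantify over all oracles of length $\le g(\cdot)$ that look like initial segments of transitive sets and take the max of the halting computations). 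This is exactly the kind of ``speed-up'' argument used to get hyperimmune-free sets in, e.g., the standard proof that $\Pi^0_1$ classes of positive measure have hyperimmune-free members, adapted to tournaments via Lemma \ref{lem:onepoint} and the minimal-interval pigeonhole from the lemmas above.

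The key steps, in order: (1) choose the computable bounding scheme $g$ and describe the stage-$s$ action on point $s$ so that $T$ remains a computable tournament and so that every finite transitive extendible $F$ retains infinitely many one-point transitive extensions (using the minimal-interval argument of Lemma \ref{lem:onepoint} to see that restricting $s$'s placement to the ``active'' minimal interval does not kill extendibility); (2) verify, via Lemma \ref{lem:onepoint} and the cofinite-extendibility lemma, that $T$ genuinely has infinite transitive subtournaments, i.e.\ the construction does not accidentally make $T$ have only finite transitive sets; (3) prove the main claim: for any infinite transitive $S\subseteq T$, the function $n\mapsto(n\text{th element of }S)$ is dominated by a computable function, because the construction forces the $n$th element of any extendible chain to lie below a computable bound; (4) conclude that any $\Phi_e^S$ is dominated by a computable function (majorize over the finitely many oracle-strings of bounded length), hence $S$ is not hyperimmune-free, which is what we want since hyperimmune-freeness would require \emph{all} computed functions to be so dominated — wait, that is automatic — so instead we must phrase it as: $S$ is hyperimmune-free \emph{would} force something we can diagonalize; more carefully, we want $T$ to have \emph{no} hyperimmune-free transitive subtournament, so we actually need the opposite — we need each candidate $S$ to \emph{fail} to be hyperimmune-free, i.e.\ to compute a function \emph{not} dominated by any computable function. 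So the real construction must go the other way: force each infinite transitive $S$ to be able to \emph{pick out} a fast-growing function (its own enumeration) that escapes computable domination, by spreading the extendible extensions \emph{sparsely} enough that locating them requires genuine search.

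Thus the corrected plan: make the extendible one-point extensions of a size-$k$ transitive set occur only at positions that grow faster than any fixed computable function could predict — e.g.\ diagonalize so that for each computable $\varphi_e$ there is a stage where the next extendible extension of the ``current'' chain is placed beyond $\varphi_e$ of its length — so that the enumeration function of any infinite transitive $S$ dominates $\varphi_e$ for cofinitely many inputs, hence is not computably dominated, hence $S$ computes a function witnessing that $S$ is \emph{not} hyperimmune-free. The main obstacle, and the delicate point of the construction, is doing this diagonalization while \emph{preserving} that every current finite transitive extendible set still has infinitely many extendible extensions: pushing an extension ``far out'' to beat $\varphi_e$ must be done inside a single minimal interval so that transitivity with $F$ is maintained, and one must check (via the minimal-interval pigeonhole of Lemma \ref{lem:onepoint}) that infinitely many further extensions survive in that interval. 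I expect steps (1) and the surviving-extendibility verification to be the crux; the reverse-mathematical formalization noted after Theorem \ref{thm:DKLS} (that $B\Sigma^0_2$ suffices) will follow by inspecting that all the induction used is $\Sigma^0_2$-bounded.
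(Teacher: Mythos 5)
Your proposal has a genuine gap, and it also misses the key structural idea the paper exploits.

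First, the direction of the argument: you initially plan to make every infinite transitive $S\subseteq T$ computably bounded, then catch yourself and pivot to the opposite plan — make every infinite transitive $S$ \emph{hyperimmune} as a set (its principal function $p_S$ not computably dominated). This pivot is in the right spirit, but it commits you to a \emph{strictly stronger} goal than what the theorem requires. The theorem only needs every infinite transitive $S$ to have \emph{hyperimmune degree}, and the paper achieves exactly that, indirectly: it uses the characterization that an infinite set $S$ is \emph{not} hyperimmune iff there is a computable strong array $\{D_{\varphi_e(x)}\}$ of pairwise disjoint finite sets, each of which meets $S$. The requirement $R_e$ the paper meets is: if $\{D_{\varphi_e(x)}\}$ is a strong array, then there exist $x_0<x_1$ such that every pair $\{y_0,y_1\}$ with $y_0\in D_{\varphi_e(x_0)}$, $y_1\in D_{\varphi_e(x_1)}$ is non-extendible. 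Given that, if $S$ were infinite, transitive, and hyperimmune-free, then $p_S$ would be computably dominated, so some such array would meet $S$ everywhere, so $S$ would contain a non-extendible pair — contradiction. Each requirement only has to kill the cross-pairs between \emph{two} finite sets $D_{\varphi_e(x_0)}$ and $D_{\varphi_e(x_1)}$, and condition (3) in the construction (disjointness of newly picked blocks from those held by higher-priority requirements) makes the finite-injury bookkeeping go through.

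Your sketch, by contrast, would need to prevent \emph{any} finite transitive $F$ of size $k$ that is $\varphi_e$-bounded (i.e.\ has $i$-th element $\le\varphi_e(i)$ for $i<k$) from being an initial segment of an infinite transitive set — a $\Pi^0_1$-class-emptiness requirement, one for each $e$. After $\varphi_e$ converges on a long initial segment there can be a very large (though finite) number of such $F$'s, and making each one non-extendible is the same kind of "act at cofinitely many future stages $s$, forcing $s$ into a $3$-cycle with a pair from $F$" move used for Theorem~\ref{thm:KLSW}. But now you must carry out that infinitary action for \emph{many} $F$'s simultaneously, and the pair chosen for $F_1$ can force $s$'s relation to some element $z$ one way while the pair chosen for $F_2$ (which may share $z$) forces it the other way. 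You gesture at this ("the delicate point of the construction") but give no mechanism to resolve it, and it is not a routine fix: unlike the KLSW argument, where each requirement claims $2e+2$ witnesses from a single set $D_e$ precisely so that a free pair survives the higher-priority choices, here the competing pairs are scattered across incomparable $F$'s, and there is no obvious counting argument that guarantees compatibility. Moreover, you never verify that your stronger goal is even achievable; the paper's construction only establishes the weaker, degree-level statement, and nothing in the standard toolbox (the hyperimmune-free basis theorem applies to $\Pi^0_1$ classes, not to the $\Pi^0_2$ class of infinite transitive subtournaments) settles this for you for free. In short: the missing idea is the strong-array characterization of hyperimmunity and the resulting two-block sandwich diagonalization; without it, the multiplicity-of-$F$'s obstruction is unaddressed and the argument does not go through.
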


\begin{proof}
  We build $T$ in stages to meet, for each~$e$, the requirement $R_e$ that if $\{ D_{\varphi_e(x)} \mid x \in \mathbb{N} \}$ 
  is a strong array, then there are $x_0 < x_1$ such that for all $y_0 \in D_{\varphi_e(x_0)}$ and all $y_1 \in D_{\varphi_e(x_1)}$, 
  the set $\{ y_0, y_1 \}$ is not extendible.  
 
  The strategy to meet an individual requirement $R_e$ in isolation is
  straightforward.  We wait for $\varphi_e(x_0)$ to converge for some $x_0$, and start
  defining $T(y,s)$ for all $y \in D_{\varphi_e(x_0)}$ and all $s$.  If $\{D_{\varphi_e} : e \in \omega\}$
  is a strong array, we must eventually find an $x_1$ such that $\varphi_e(x_1)$
  converges with $T(y_0,y_1)$ for all $y_0 \in D_{\varphi_e(x_0)}$ and all
  $y_1 \in D_{\varphi_e(x_1)}$.  We then start defining $T(s,y)$ for all $y \in D_{\varphi_e(x_0)}$
  and all $s$, and $T(y,s)$ for all $y \in D_{\varphi_e(x_1)}$ and all $s$.  Thus ensures that
  $R_e$ is met.  Sorting out competing requirements can be handled via a standard finite
  injury priority argument, as we now show.
  
  At stage $s$, we define $T(x,s)$ or $T(s,x)$ for all $x < s$.  We proceed
  by substages $e \leq s$.  At substage $e$, we act as follows, breaking into three cases.
  
  \medskip
  \noindent \emph{Case 1: $R_e$ has no witnesses.} Let $x_0$ be the least $x < s$, if it
  exists, such that
  \begin{enumerate}
    \item[(1)] $\varphi_{e,s}(x) \downarrow$;
    \item[(2)] $D_{\varphi_e(x)} \neq \emptyset$ and each element of $D_{\varphi_e(x)}$ is $< s$;
    \item[(3)] for all $i < e$ and any witness $y$ of $R_i$, $x > y$ and $D_{\varphi_e(x)}$ is disjoint
    from $D_{\varphi_i(y)}$.
  \end{enumerate}
  If there is no such $x_0$, then do nothing and proceed to the next substage.  If there is such an $x_0$, then 
  call $x_0$ the \emph{first witness} of $R_e$, define $T(y,s)$ for all $y \in D_{\varphi_e(x_0)}$, 
  cancel the witnesses of each $R_i$ with $i > e$ and proceed to the next substage.
 
  \medskip
  \noindent \emph{Case 2: $R_e$ has exactly one witness.}  Call this first witness $x_0$.  Let $x_1$
  be the least $x < s$, if it exists, that satisfies conditions (1)--(3) above, as well as
  \begin{enumerate}
  \setcounter{enumi}{3}
  \item[(4)] $T(y_0,y_1)$ for all $y_0 \in D_{\varphi_e(x_0)}$ and all $y_1 \in D_{\varphi_e(x)}$.
  \end{enumerate}
  If there is no such $x_1$, the define $T(y,s)$ for all $y \in D_{\varphi_e(x_0)}$ and proceed to the next substage.  If there is 
  such a witness $x_1$, then call $x_1$ the \emph{second witness} of $R_e$, define $T(s,y)$
  for all $y \in D_{\varphi_e(x_0)}$ and $T(y,s)$ for all $y \in D_{\varphi_e(x_1)}$, 
  cancel the witnesses of each $R_i$ with $i > e$ and proceed to the next substage.
  
  \medskip
  \noindent \emph{Case 3: $R_e$ has two witnesses.}  Let $x_0$ be the first witness and $x_1$ be the second witness.  Define 
  $T(s,y)$ for all $y \in D_{\varphi_e(x_0)}$ and $T(y,s)$ for all $y \in D_{\varphi_e(x_1)}$.  Proceed to the next substage.  
  
  \medskip
  \noindent When all substages $e < s$ are complete, define $T(x,s)$ for any $x < s$ for which neither $T(x,s)$ nor $T(s,x)$ has been defined.  
  This completes the description for the construction.  
  
  It is clear that $T$ is a computable tournament on $\mathbb{N}$.  
  To verify that requirement $R_e$ is met, suppose $\{D_{\varphi_e(x)} : x \in \mathbb{N} \}$
  is a strong array.  By induction, support further that each $R_i$, $i < e$, is satisfied.  Since each
  requirement $R_i$ has at most two witnesses at any stage, and since it can lose these witnesses
  only for the sake of some $R_{i'}$, $i' < i$, being assigned a witness, we let $s'$ be the least
  stage such that no $R_i$, $i < e$, is assigned a witness at any stage $s \geq s'$. By minimality of $s'$,
  it must be that $R_e$ has no witnesses at stage $s'$.  Since $\{D_{\varphi_e(x)} : x \in \mathbb{N} \}$ is a strong
  array, we let $s_0 \geq s'$ be the least stage such that some $x < s_0$ satisfies conditions (1)--(3) of
  the construction.  Then the least such $x$ is assigned as a first witness $x_0$ of $R_e$, and this
  witness is never cancelled.
  
  If, at any later stage $s_1 > s_0$, we assign a second witness $x_1$ for $R_e$, then $R_e$ will
  be satisfied.  (Because $x_1$ will never be canceled, we have $T(y_0,y_1)$, $T(s,y_0)$ and $T(y_1,s)$ 
  for all $s > s_1$, all $y_0 \in D_{\varphi_e(x_0)}$ and all $y_1 \in D_{\varphi_e(x_1)}$.  Therefore, $\{ y_0,y_1 \}$ is not extendible.)
  So suppose we never find a second witness $x_1$.  Then by
  construction, we define $T(y,s)$ for all $s \geq s_0$ and all  $y \in D_{\varphi_e(x_0)}$.  But if $s$ is large
  enough that for some $x < s$, $\varphi_{e,s}(x) \downarrow$ and all elements of $D_{\varphi_e(x)}$
  lie between $s_0$ and $s$, then $x$ will satisfy conditions (1)--(4) of the construction.  The
  least such $x$ is assigned as a second witness $x_1$ of $R_e$ for the desired contradiction.
\end{proof}

\section{$\EM$ does not imply $\SRT$}
\label{sec:EM}

Before giving the proof of Theorem \ref{thm:EM} in a style similar to the proof of Theorem \ref{thm:ADS}, we present some motivating ideas for the 
forcing construction. 
Fix an index $e$.  We sketch a strategy to meet a single diagonalization requirement towards constructing a stable coloring $c_{\infty}$ such that if 
$\Phi_e^{c_{\infty}}$ is the characteristic function for an infinite 
tournament $T_e^{c_{\infty}}$ given by $\Phi_e^{c_{\infty}}$, then there is a solution $S_{\infty}$ to $T_e^{c_{\infty}}$ such that 
$c_{\infty} \oplus S_{\infty}$ does not compute a solution to $c_{\infty}$.  A single diagonalization requirements has the form 
\[
R_i: \Phi_i^{c_{\infty} \oplus S_{\infty}} \text{ is not a solution to } c_{\infty}.
\]

To approximate $c_{\infty}$ we use a triples $(c,A^*,B^*)$ (called partial stable colorings) 
such that $c$ is a 2-coloring of the two element subsets of a finite domain $[0,|c|]$, and 
$A^*$ and $B^*$ are disjoint subsets of this domain.  We say $(c_{\alpha},A_{\alpha}^*,B_{\alpha}^*)$ \textit{extends} $(c_{\beta},A_{\beta}^*,B_{\beta}^*)$ if 
\begin{itemize}
\item $c_{\beta} \subseteq c_{\alpha}$, $A_{\beta}^* \subseteq A_{\alpha}^*$, $B_{\beta}^* \subseteq B_{\alpha}^*$, 
\item if $a \in A_{\beta}^*$ and $|c_{\beta}| < x \leq |c_{\alpha}|$, then $c_{\alpha}(a,x) = 0$, and 
\item if $b \in B_{\beta}^*$ and $|c_{\beta}| < x \leq |c_{\alpha}|$, then $c_{\alpha}(b,x) = 1$.
\end{itemize}
In the full construction, these partial stable colorings will be our ground forcing conditions, and we can force statements such as ``$F$ is a finite transitive 
subtournament of $T_e^{c_{\infty}}$'' or ``$I$ is a minimal interval in $F$ which is infinite in $T_e^{c_{\infty}}$" in a standard manner.  For example, 
the set of $(c,A^*,B^*)$ such that $i \in A^* \cup B^*$ is obviously dense, so a generic coloring $c_{\infty}$ will be stable.   
Given $\alpha = (c_{\alpha},A_{\alpha}^*,B_{\alpha}^*)$, we let $\mathbb{C}_{\alpha}$ denote the set of suitably generic infinite stable 
colorings extending $\alpha$. 

To approximate a solution $S_{\infty}$ to $T_e^{c_{\infty}}$, we augment a partial stable coloring $\alpha$ by adding a finite transitive subtournament 
$F_{\alpha}$ of $T_e^{c_{\alpha}}$ and a minimal interval $I_{\alpha}$ of $F_{\alpha}$ such that 
$I_{\alpha}$ is infinite in every tournament $T_e^{c_{\infty}}$ for $c_{\infty} \in \mathbb{C}_{\alpha}$.  $F_{\alpha}$ denotes the part of $S_{\infty}$ 
specified so far and $I_{\alpha}$ witnesses the fact that no matter how $c_{\alpha}$ is (generically) extended to $c_{\infty}$, 
$F_{\alpha}$ is extendible in $T_e^{c_{\infty}}$.  Thus, a condition for the purposes of this sketch has the form 
$\alpha = (c_{\alpha},A_{\alpha}^*,B_{\alpha}^*,F_{\alpha},I_{\alpha})$.  We say $\alpha$ extends $\beta$ if the partial colorings extend as above, 
$F_{\beta} \subseteq F_{\alpha}$, $I_{\alpha}$ is a subinterval of $I_{\beta}$ and for each $x \in F_{\alpha} \setminus F_{\beta}$, $x > \text{max}(F_{\beta})$ 
and $x \in I_{\beta}$. 

Given a condition $\alpha$, we would like to meet $R_i$ by extending $c_{\alpha}$ to $c_{\beta}$ and $F_{\alpha}$ to $F_{\beta}$ so that 
$\Phi_i^{c_{\beta} \oplus F_{\beta}}(y) = 1$ for some large $y \not \in A_{\alpha}^* \cup B_{\alpha}^*$.  Assuming we can do this without expanding 
$A_{\alpha}^* \cup B_{\alpha}^*$, we are free to add $y$ to either $A_{\alpha}^*$ or $B_{\alpha}^*$.  Therefore, if we can perform such an expansion 
twice, we will arrive at a condition $\gamma$ such that 
\[
\exists a \in A_{\gamma}^* \, \exists b \in B_{\gamma}^* \, (\Phi_i^{c_{\gamma} \oplus F_{\gamma}}(a) = \Phi_i^{c_{\gamma} \oplus F_{\gamma}}(b) = 1)
\]
and hence will have successfully diagonalized.  The obvious difficulty is that we have to maintain that $F_{\gamma}$ is extendible in $T_e^{c_{\infty}}$ 
for all $c_{\infty} \in \mathbb{C}_{\gamma}$.  We use following partition theorem to help address this problem.    

\begin{lem}
\label{lem:partition1}
Let $T$ be an infinite tournament, $F$ be a finite transitive set and $(a,b)$ be a minimal interval of $F$ which is infinite in $T$.  For any finite set $J \subseteq (a,b)$ 
such that $F \cup J$ is transitive, there is a partition $P \cup Q = J$ such that both $F \cup P$ and $F \cup Q$ are extendible and contain minimal intervals 
in $(a,b)$ which are infinite in $T$.  
\end{lem}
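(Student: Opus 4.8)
The plan is to pigeonhole on how the elements of $(a,b)$ ``see'' the finite set $J$. Since $F\cup J$ is transitive, $\leq_T$ linearly orders $J$; write $J=\{j_1<_T\cdots<_T j_k\}$, and note that, as $(a,b)$ is a minimal interval of $F$, no element of $F$ lies strictly $T$-between $a$ and $b$. Because $(a,b)$ is infinite in $T$, there are infinitely many $x\in(a,b)$ with $F\cup\{x\}$ transitive --- this is the content of a minimal interval of a transitive set being ``infinite in $T$'', as in the proof of Lemma \ref{lem:onepoint} --- so I fix an infinite set $W$ of such $x$. To each $x\in W$ I attach its \emph{signature} $\epsilon(x)\in\{0,1\}^k$ with $\epsilon(x)_\ell=1$ iff $T(x,j_\ell)$. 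There are only finitely many signatures, so I fix one value $\epsilon$ attained by an infinite subset $W_\epsilon\subseteq W$.

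Next I split the index set $\{1,\dots,k\}$ using $\epsilon$. The elementary point is that a sequence of $0$'s and $1$'s cannot strictly decrease three times, so its positions split into two sets on each of which $\epsilon$ is nondecreasing: put into $S_P$ every position carrying a $1$ together with every position carrying a $0$ that precedes the first $1$, and let $S_Q$ hold the remaining positions (the $0$'s occurring after the first $1$); then $\epsilon\upharpoonright S_P$ has the form $0\cdots0\,1\cdots1$ while $\epsilon\upharpoonright S_Q$ is identically $0$. Set $P=\{j_\ell:\ell\in S_P\}$ and $Q=\{j_\ell:\ell\in S_Q\}$, so that $P\cup Q=J$ is a partition (with one side possibly empty).

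The main work is to verify that $F\cup P\cup\{x\}$ is transitive for each $x\in W_\epsilon$, and symmetrically $F\cup Q\cup\{x\}$; this is a bounded case analysis on triples. Triples lying inside $F\cup P$ or inside $F\cup\{x\}$ are transitive by hypothesis. For a triple $\{y,j_\ell,x\}$ with $y\in F$ and $j_\ell\in P$: transitivity of $F\cup\{x\}$ places $x$ in the ``$(a,b)$ slot'' relative to $F$, so $y\to x$ and $y\to j_\ell$ if $y\leq_F a$, while $x\to y$ and $j_\ell\to y$ if $y\geq_F b$, and in either case the remaining $j_\ell$--$x$ edge (fixed by $\epsilon_\ell$) cannot complete a $3$-cycle. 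For a triple $\{j_\ell,j_{\ell'},x\}$ with $\ell<\ell'$ in $S_P$: the only way to get a $3$-cycle is $T(x,j_\ell)$ and $T(j_{\ell'},x)$, i.e.\ a descent $\epsilon_\ell=1,\epsilon_{\ell'}=0$ in $\epsilon\upharpoonright S_P$, which does not occur. Hence $F\cup P$ has infinitely many one-point transitive extensions, so it is extendible by Lemma \ref{lem:onepoint}, and likewise $F\cup Q$. I expect this triple-chasing --- and, more conceptually, identifying the full signature vector (rather than a single crossing point) as the right quantity to pigeonhole on --- to be the only real content.

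For the minimal-interval claim, uniformity of the signature does the work: every $x\in W_\epsilon$ occupies the same slot of $F\cup P\cup\{x\}$, namely immediately after the $j_\ell\in P$ with $\epsilon_\ell=0$ and immediately before those with $\epsilon_\ell=1$, so the whole infinite set $W_\epsilon$ lies in a single minimal interval $I_P$ of $F\cup P$, whose endpoints belong to $\{a,b\}\cup P\subseteq\{a,b\}\cup(a,b)$; thus $I_P$ is a minimal interval of $F\cup P$ lying in $(a,b)$ and infinite in $T$. Symmetrically $F\cup Q$ has such an interval $I_Q$. The degenerate cases --- $J=\emptyset$, or $\epsilon$ constant so that one of $P,Q$ is empty --- are immediate, with $(a,b)$ itself serving as the required minimal interval on the empty side.
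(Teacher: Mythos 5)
The paper does not actually prove Lemma \ref{lem:partition1}; it is stated as an informal warm-up, and the work it gestures at is carried out formally in Lemma \ref{family_split} (via Lemma \ref{family_partition}) using infinite families of subtournaments. Your argument is correct and uses the same core idea as the paper's proof of Lemma \ref{family_split}: record, for each candidate $x$ in the infinite interval, the full ``signature'' of how $x$ sees $J$, and pigeonhole to fix a single signature held by infinitely many $x$.

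One small difference worth flagging is the choice of partition. The paper's formal analog splits $E$ by the natural signature split $E_0=\{a\mid T(a,x)\}$, $E_1=\{b\mid T(x,b)\}$, i.e.\ for you this would be $P=\{j_\ell\mid \epsilon_\ell=0\}$ and $Q=\{j_\ell\mid\epsilon_\ell=1\}$. That simpler split already does everything you need: for $x\in W_\epsilon$, all of $P$ is $T$-below $x$ and all of $Q$ is $T$-above $x$, so the triple analysis is immediate (no cycle is possible when two of the three edges point the same way into or out of $x$), and each $x$ sits in the single minimal interval $(\max_{\leq_T} P,\,b)$ of $F\cup P$ and in $(a,\,\min_{\leq_T}Q)$ of $F\cup Q$. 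Your ``first-1'' recut of the index set $\{1,\dots,k\}$ produces a valid (and correct) partition, but it adds a layer of bookkeeping that the natural split avoids; the key observation --- that transitivity of $F\cup P\cup\{x\}$ only fails via a descent in $\epsilon$ restricted to $P$ --- is exactly what makes the natural split automatic, since there the restricted signature is constant.

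Two remarks on hypotheses, neither a gap in your argument: (1) your reading of ``infinite in $T$'' as ``infinitely many $x\in(a,b)$ with $F\cup\{x\}$ transitive'' is the correct one --- the literal set $(a,b)$ being infinite would not suffice, but every place the paper invokes this lemma the interval comes from an infinite transitive set extending $F$, so the stronger reading is what is meant; and (2) Lemma \ref{lem:onepoint} is stated for ``transitive'' $T$, which is clearly a typo (its proof uses no such hypothesis), so your appeal to it is fine.
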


Given a condition $\alpha$, we ask our main question: is there a coloring $c_{\infty} \in \mathbb{C}_{\alpha}$ extending $c_{\alpha}$, 
an infinite transitive set $S$ in $T_e^{c_{\infty}}$ contained in $I_{\alpha}$ with $F_{\alpha} < S$, 
and a finite initial segment $J$ of $S$ such that for all partitions $P \cup Q = J$, there is a transitive 
$F \subseteq P$ or $F \subseteq Q$ for which $\Phi_i^{c_{\infty} \oplus (F_{\alpha} \cup F)}(y) = 1$ for some $y \not \in A_{\alpha}^* \cup B_{\alpha}^*$?

Suppose the answer to this question is yes.  We collect a finite set $Y$ disjoint from $A^*_\alpha\cup B^*_\alpha$ such that for each partition $P\cup Q=J$, there is some $F_{P,Q}\subseteq P$ or $F_{P,Q}\subseteq Q$ and some $y\in Y$ so that $\Phi^{c_\infty\oplus(F_\alpha\cup F_{P,Q})}_i(y)=1$.  Set $c_\beta$ to be an initial segment of $c_\infty$ extending $c_\alpha$ which is long enough to force all these computations, and set $A^*_\beta=A^*_\alpha\cup Y$ and $B^*_\beta=B^*_\alpha$ (or vice versa).

We have defined our extension $\beta$ of $\alpha$ with the exception of $F_\beta$ and $I_\beta$.  To define $F_\beta$, we need to find a partition $P\cup Q=J$ such that both $F_\alpha\cup P$ and $F_\alpha\cup Q$ are extendible in every $c_\infty\in\mathbb{C}_\beta$.  There are finitely many partitions and we consider each such partition in turn.  For a given partition $P\cup Q$, we ask whether there is an extension of $(c_\beta,A^*_\beta,B^*_\beta)$ which forces one of $F_\alpha\cup P$ or $F_\alpha\cup Q$ to be non-extendible.  (Forcing such a property depends only on the part of the condition dealing with the approximation of the coloring.)  If there is no such an extension then we take $F=F_{P,Q}$ and $F_\beta=F_\alpha\cup F_{P,Q}$.  If there is such an extension, replace $(c_\beta,A^*_\beta,B^*_\beta)$ by this extension and consider the next partition.  We must eventually find a partition which cannot be forced to be finite, since otherwise we end with a condition $(c_\beta,A^*_\beta,B^*_\beta)$ extending $(c_\alpha,A^*_\alpha,B^*_\alpha)$ which forces either $F_\alpha\cup P$ or $F_\alpha\cup Q$ to be non-extendible for any $P\cup Q=J$, and hence they must be non-extendible in any $c_\infty\in\mathbb{C}_\beta$, contradicting Lemma \ref{lem:partition1}.

To define $I_{\beta}$, we need to find a minimal subinterval of $I_{\alpha}$ which is infinite in every $c_{\infty} \in \mathbb{C}_{\beta}$.  $F$ subdivides $I_{\alpha}$ 
into finitely many minimal intervals and we follow the same procedure as before, considering each such minimal interval in turn.  For a given minimal interval of $I_{\alpha}$ (as subdivided 
by $F$), ask whether there is an 
extension of $(c_{\beta}, A_{\beta}^*, B_{\beta}^*)$ which forces this interval to be finite.  (Forcing such a property depends only on the part of the 
condition dealing with the approximation of the coloring.)  If there is no such extension, then set $I_{\beta}$ equal to this minimal interval and we have 
completed the description of $\beta$.  If there is such an extension, replace $(c_{\beta}, A_{\beta}^*, B_{\beta}^*)$ by this extension and consider the 
next minimal interval.  We must eventually find a minimal interval which cannot be forced to be finite because otherwise, we end with a condition 
$(c_{\beta},A_{\beta}^*,B_{\beta}^*)$ extending $(c_{\alpha},A_{\alpha}^*,B_{\alpha}^*)$ which forced all the minimal subintervals of $I_{\alpha}$ to be finite and 
hence forced $I_{\alpha}$ to be finite.  This contradicts the assumption that $I_{\alpha}$ was infinite in all $c_{\infty} \in \mathbb{C}_{\alpha}$.  

Thus, when the answer to the main question is yes, we are able to extend $\alpha$ to $\beta$ forcing $\Phi_i^{c_{\beta} \oplus F_{\beta}}(y) = 1$ 
for a large element $y$ which we could put into either $A_{\beta}^*$ or $B_{\beta}^*$.  So, suppose the answer to the main question is no.  To deal with this 
case, we introduce a Matthias component to our forcing conditions.  

Fix $c_{\infty} \in \mathbb{C}_{\alpha}$.  A solution $S$ to $T_e^{c_{\infty}}$ is consistent with the restrictions placed by $\alpha$ if $S$ is contained in 
$I_{\alpha}$ and $F_{\alpha} < S$.  We narrow this collection of solutions to consider only those that satisfy 
\[
\forall \, \text{finite} \, B \subseteq S \, \forall y \, (\Phi_i^{c_{\infty} \oplus (F_{\alpha} \cup B)}(y) = 1 \rightarrow y \in A_{\alpha}^* \cup B_{\alpha}^*).
\]
Let $\mathbb{S}_{\alpha,c_{\infty}}$ denote this collection of solutions to $T_e^{c_{\infty}}$.  (We show this set is nonempty below.)  
If our eventual generic coloring is 
$c_{\infty}$ and we restrict our choices for extensions of $F_{\alpha}$ to those which are subsets of elements of $\mathbb{S}_{\alpha,c_{\infty}}$, then our 
generic solution $S_{\infty}$ will meet $R_i$ because $\Phi^{c_{\infty} \oplus S_{\infty}}$ will be finite.  Therefore, we want to add a 
Matthias component to our conditions requiring that when we extend $c_{\alpha}$ and $F_{\alpha}$ to $c_{\beta}$ and $F_{\beta}$ in the future, we 
must have that $F_{\beta}$ is a subset of some element of $\mathbb{S}_{\beta,c_{\infty}}$ for each $c_{\infty}$ extending $c_{\beta}$.

\begin{lem}
$\mathbb{S}_{\alpha,c_{\infty}}$ is nonempty.
\end{lem}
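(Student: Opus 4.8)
The plan is to fix an arbitrary $c_\infty\in\mathbb{C}_\alpha$ and to build a member of $\mathbb{S}_{\alpha,c_\infty}$ by a direct recursion, leaning on the fact that we are in the ``no'' case of the main question together with the interval combinatorics of Section~\ref{sec:Erdos}. Since $I_\alpha$ is infinite in $T_e^{c_\infty}$, I would construct an infinite transitive $S\subseteq I_\alpha$ with $F_\alpha<S$ as an increasing union $S=\bigcup_k G_k$ of finite transitive sets, maintaining two invariants: (i) $F_\alpha\cup G_k$ is extendible in $T_e^{c_\infty}$, witnessed by an infinite minimal subinterval $I_k\subseteq I_\alpha$, with $G_k\subseteq I_\alpha$ and $F_\alpha<G_k$; and (ii) for every $B\subseteq G_k$ and every $y\notin A^*_\alpha\cup B^*_\alpha$ we have $\Phi_i^{c_\infty\oplus(F_\alpha\cup B)}(y)\neq 1$. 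Keeping (i) is routine: adjoining a point $z$ splits $I_k$ into two minimal subintervals of $F_\alpha\cup G_k\cup\{z\}$, one of which is still infinite, and cofinitely many such $z$ keep the set extendable (Lemma~\ref{lem:onepoint} and the lemma following it). In the limit every finite $B\subseteq S$ lies in some $G_k$, so (ii) gives $S\in\mathbb{S}_{\alpha,c_\infty}$.

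The base case $G_0=\emptyset$, $I_0=I_\alpha$ is immediate: (i) holds because $I_\alpha$ is infinite in every $c_\infty\in\mathbb{C}_\alpha$, and (ii) for $B=\emptyset$ is precisely the assertion that $\Phi_i^{c_\infty\oplus F_\alpha}(y)\neq 1$ for all $y\notin A^*_\alpha\cup B^*_\alpha$ --- which is exactly the ``no'' answer applied with the empty initial segment $J=\emptyset$, whose only partition $\emptyset\cup\emptyset$ forces $F=\emptyset$.

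The inductive step is where the work lies. Given $G_k$ and $I_k$, I need to add a point $z$ (chosen so that the surviving half of $I_k$ is infinite) while preserving (ii), i.e.\ so that no $B_0\subseteq G_k$ and $y\notin A^*_\alpha\cup B^*_\alpha$ have $\Phi_i^{c_\infty\oplus(F_\alpha\cup B_0\cup\{z\})}(y)=1$. If no such $z$ existed, then since $G_k$ has finitely many subsets I could fix one $B_0\subseteq G_k$ together with an infinite set $Z$ of candidate points, each carrying a witness $y_z\notin A^*_\alpha\cup B^*_\alpha$ with $\Phi_i^{c_\infty\oplus(F_\alpha\cup B_0\cup\{z\})}(y_z)=1$. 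Thinning $Z$ to an infinite transitive subtournament $S^*$ and combining it with $B_0$ inside $I_\alpha$ above $F_\alpha$, I would obtain an infinite transitive $S'\subseteq I_\alpha$ with $F_\alpha<S'$ and a finite initial segment $J$ of $S'$ for which \emph{every} partition $P\cup Q=J$ contains a monochromatic transitive piece of the form $B_0\cup\{z\}$ with $z\in S^*$, hence a transitive $F\subseteq P$ or $F\subseteq Q$ with $\Phi_i^{c_\infty\oplus(F_\alpha\cup F)}(y)=1$ for a forbidden $y$ --- contradicting the negative answer to the main question. Thus a good $z$ exists, and we set $G_{k+1}=G_k\cup\{z\}$ with $I_{k+1}$ the infinite half of $I_k$ cut off by $z$.

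The main obstacle is exactly this last reduction: the main question only controls transitive subsets of a \emph{single} side $P$ or $Q$ of a partition, so if $B_0$ were split between $P$ and $Q$, the witness $B_0\cup\{z\}$ would straddle the partition and yield no contradiction. Ruling this out is where Lemma~\ref{lem:partition1} is needed, together with care about how $G_k$ sits in the linear order that $T_e^{c_\infty}$ induces on transitive sets relative to the working interval $I_k$: one wants the committed part $B_0\subseteq G_k$ to lie entirely on one side of the interval containing $S^*$ (so that it precedes the new points in a common initial segment), and then one applies Lemma~\ref{lem:partition1} to the $S^*$-tail to split $J$ at an infinite gap while keeping $B_0$ on a single side. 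Verifying that an interval with this property survives interval refinement at every stage --- and formulating the corresponding convexity clause in invariant (i) --- is the delicate point; everything else is routine transitive-tournament combinatorics.
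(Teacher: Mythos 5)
Your inductive step contains a genuine gap that Lemma~\ref{lem:partition1} cannot close. Suppose the step fails, so you have a fixed $B_0\subseteq G_k$ and infinitely many candidates $z$ with $\Phi_i^{c_\infty\oplus(F_\alpha\cup B_0\cup\{z\})}(y_z)=1$ for forbidden $y_z$. Your contradiction requires showing that for some finite initial segment $J$ of $S'=B_0\cup S^*$, \emph{every} partition $P\cup Q=J$ contains a bad transitive piece. But the adversary can put \emph{all} of $B_0$ on one side and \emph{all} of $J\cap S^*$ on the other (e.g.\ $P=B_0$, $Q=J\cap S^*$). Then $P=B_0$ has no bad subset (by your invariant (ii), subsets of $G_k$ are clean), and $Q$ consists only of candidate points without $B_0$, so none of your known bad pieces $B_0\cup\{z\}$ lies inside either side. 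You identify the failure mode as ``$B_0$ split between $P$ and $Q$,'' but the actual failure is worse: even when $B_0$ is unsplit, the candidates can all land on the opposite side. Lemma~\ref{lem:partition1} is of no use here --- it is an \emph{existence} statement (there is one nice partition preserving extendibility) and does nothing to constrain the partitions ranged over in the main question. What is missing is precisely why the ``no'' answer should yield enough coherence across stages, and it does not: it only promises, for each fixed $J$, \emph{some} good partition, with no control over how those partitions fit together as $J$ grows.

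The paper's argument is structurally different and resolves exactly this coherence problem. Rather than adding elements one at a time, it fixes an arbitrary solution $S$ consistent with $\alpha$, uses the ``no'' answer at each finite level $S_n$ to obtain a good partition $P_n\cup Q_n=S_n$, and assembles the partition-halves into a finitely branching tree $T$ ordered by inclusion and restriction. K\"onig's Lemma then extracts an infinite path of nested halves; the required member of $\mathbb{S}_{\alpha,c_\infty}$ is either the union of these halves (if the labels grow unboundedly) or the complement in $S$ of one eventually-stabilized label. Every finite subset of the resulting $S'$ sits inside a single half of some good partition, so the defining condition on $\mathbb{S}_{\alpha,c_\infty}$ holds. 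This tree-and-compactness step is the essential idea your proposal lacks; your invariant (ii) cannot be locally propagated without it.
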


\begin{proof}
Fix a solution $S = \{ s_1 < s_2 < \cdots \}$ of $T_e^{c_{\infty}}$ consistent with $\alpha$.  For $n \geq 1$, let $S_n = \{ s_1, \ldots, s_n \}$.  Let $R(P,Q,n)$ be a 
predicate that holds if and only if $P \cup Q = S_n$ is a partition such that for all $F$ contained in $P$ or $Q$ and all $y$, 
$\Phi_i^{c_{\infty} \oplus (F_{\alpha} \cup F)}(y) = 1$ implies $y \in A_{\alpha}^* \cup B_{\alpha}^*$.   Because the answer to the main question was no, 
for every $n$, there is a partition $P \cup Q = S_n$ such that $R(P,Q,n)$ holds.  We define a finitely branching tree $T$ such that the nodes at level $n$ of 
$T$ are labelled by the ``halves" of such partitions so that if $\sigma$ is an extension of $\tau$ on $T$, then the label of $\tau$ is a subset of the label of 
$\sigma$.  

For level $n=0$, let $T$ have a root $\lambda$ with label $\ell(\lambda) = \emptyset$.  For level $n=1$, there is only one partition $\{ s_1 \} \cup \emptyset$ of 
$S_1$ and therefore, this partition must satisfy $R$.  Let $T$ have two nodes $\sigma_0$ and $\sigma_1$ at level 1 with labels $\ell(\sigma_0) = \{ s_1 \}$ and 
$\ell(\sigma_1) = \emptyset$.    

For level $n+1$, consider each partition $P \cup Q = S_{n+1}$ satisfying $R(P,Q,n+1)$.  Let $P' \cup Q' = S_n$ be the restriction of this partition to $S_n$ and 
note that $R(P',Q',n)$ holds.  Therefore, by induction, there are nodes $\sigma'$ and $\tau'$ at level $n$ of $T$ such that $\ell(\sigma')=P'$ and 
$\ell(\tau') = Q'$.  Add nodes $\sigma$ and $\tau$ to $T$ at level $n+1$ with $\sigma$ a successor of $\sigma'$ and label $\ell(\sigma) = P$ and with 
$\tau$ a successor of $\tau'$ with label $\ell(\tau) = Q$.

This completes the description of $T$.  Notice that $T$ is infinite, finitely branching and has the property that if $\sigma$ is an extension of $\tau$, then 
$\ell(\tau) \subseteq \ell(\sigma)$.  To extract an element of $\mathbb{A}_{\alpha,c_{\infty}}$, we break into two cases.  

First, suppose there is a level $n$ and a node $\sigma$ at level $n$ such that for every $m \geq n$, there is an extension $\tau_m$ of $\sigma$ at level 
$m$ with $\ell(\tau_m) = \ell(\sigma)$.  In this case, for every $m \geq n$, $\ell(\sigma) \cup (S_m \setminus \ell(\sigma)) = S_m$ satisfies $R$.  
Therefore, $S' = \cup_{m \geq n} (S_m \setminus \ell(\sigma))$ is an infinite subset of $S$ and hence is a solution to $T_e^{c_{\infty}}$ consistent 
with $\alpha$.  Furthermore, if $B \subseteq S'$ is finite, then $B \subseteq (S_m \setminus \ell(\sigma))$ for some $m$ and hence if 
$\Phi_i^{c_{\infty} \oplus (F_{\alpha} \cup B)}(y) = 1$, then $y \in A_{\alpha}^* \cup B_{\alpha}^*$.  Therefore, $S' \in \mathbb{S}_{\alpha,c_{\infty}}$.

Second, suppose there is no such $\sigma$.  In this case, for every $\sigma$ in $T$, either the tree above $\sigma$ is finite or there is a level 
$m$ such that for all $\tau$ extending $\sigma$ at level $m$, $\ell(\sigma) \subsetneq \ell(\tau)$.  Because $T$ is infinite and finitely branching, it has an 
infinite path $\sigma_0 \subseteq \sigma_1 \subseteq \cdots$.  Let $S' = \cup \ell(\sigma_k)$.  By our case assumption, $S'$ is an infinite subset of 
$S$ and as above, $S' \in \mathbb{S}_{\alpha,c_{\infty}}$.
\end{proof}

To fully implement the strategy sketched in this section, we need a more uniform method to represent these collections of ``allowable'' solutions to 
$T_e^{c_{\infty}}$.  We wait for the full construction to spell out the details of representing the solutions in an 
appropriate matter. 

\subsection{Iteration forcing}\label{sec:it2}

As with the proof of Theorem \ref{thm:ADS}, we begin the formal proof of Theorem \ref{thm:EM} with the iteration forcing.  Assume we have already 
used the ground forcing to build a stable 2-coloring $c$ of $[\mathbb{N}]^2$ along with $A^*(c)$ and $B^*(c)$.  The general context for one 
step of the iteration forcing is a fixed set $X$ and an index $e$ such that 
\begin{itemize}
\item $c \leq_T X$;
\item $X$ does not compute a solution to $c$;
\item $\Phi_e^X$ is the characteristic function for an infinite tournament $T_e^X$ on $\mathbb{N}$; and 
\item each requirement $\mathcal{K}^{X,A^*(c),B^*(c)}$ is uniformly dense (in a sense defined below).
\end{itemize}

Our goal is to define a generic solution $G$ to $T_e^X$ such that $X \oplus G$ does not compute a solution to $c$ and the requirements 
$\mathcal{K}^{X \oplus G,A^*(c),B^*(c)}$ are uniformly dense.  We let $\mathbb{E}_e^X$ denote the set of finite subtournaments of 
$T_e^X$ and let variations on $E$ (such as $E'$, $\tilde{E}$, $E_0$, etc.) denote finite subtournaments of $T_e^X$.  We let 
$\mathbb{F}_e^X$ denote the set of finite transitive subtournament of $T_e^X$ and let variations on $F$ denote finite transitive subtournaments of $T_e^X$.  

\begin{defn}
\label{def:family}
  A \emph{family of subtournaments} of $T_e^X$ (or simply a \emph{family}) is a function $S^X(n)$ computable from $X$ such that
  \begin{enumerate}
  \item[(A1)] for each $n$, $S^X(n)$ is a finite collection of finite subtournaments of $T$; and
  \item[(A2)] whenever $E \in S^X(n+1)$, there is an $E' \in S^X(n)$ such that $E' \subsetneq E$ and for all $x \in E \setminus E'$, 
  $x > \text{max}(S^X(n)) = \text{max}( \cup \{ \tilde{E} \mid \tilde{E} \in S^X(n) \})$.   
  \end{enumerate}
We say $S$ is \emph{infinite} if for every $n$, $S(n)$ is non-empty.
\end{defn}

To be more precise, the value of $S^X(n)$ is the canonical index for the finite collection of finite subtournaments so we can calculate 
$\text{max}(S^X(n))$ effectively in $X$. A family of subtournaments is represented by its index, so later when such families appear as 
part of our forcing conditions, we mean that the index is listed a part of the condition.  Note that there is a trivial infinite family of  
subtournaments given by $S(n)=\{[0,n]\}$.

For the first part of this construction, we will typically drop the 
superscript $X$ and denote a family by $S$.  Later when we consider forcing uniform density of the requirements 
$\mathcal{K}^{X \oplus G,A^*(c),B^*(c)}$ at the next 
level, we will be more careful about the oracle as it will have the form $X \oplus F$ for some finite transitive subtournament $F$ of $T_e^X$.  

We view a family of subtournaments $S$ as a finitely 
branching tree whose nodes at level $n$ are the finite subtournaments in $S(n)$.  (We artificially add a root node at the bottom of the tree.)  
The node $E \in S(n+1)$ is a successor of $E' \in S(n)$ if and only if $E' \subseteq E$ and for all $x \in E \setminus E'$, $x > \text{max}(S(n))$.  
Since there is a unique such node $E'$, the family $S$ forms a finitely branching tree under this successor relation, and $X$ can compute the 
number of branches at each level.

If $E_0 \subsetneq E_1 \subsetneq E_2 \subsetneq \cdots$ is an infinite path in a family $S$ (viewed as a tree), then $\cup E_n$ is an infinite subtournament of 
$T_e^X$.  We say that an infinite subtournament $U \subseteq T_e^X$ is \emph{coded by} $S$ if $U = \cup E_n$ for some infinite path through $S$.  
We will use families in our forcing conditions in the style of Matthias forcing to restrict our attention to solutions of $T_e^X$ which are contained 
in an infinite subtournament coded by $S$.     

\begin{defn}
Let $E$ be a finite subtournament of $T_e^X$ and let $S$ be a family of subtournaments such that $E < E'$ for all $E' \in S(0)$.  
We write $E+S$ for the family given by 
\[
(E+S)(n)=\{E \cup E' \mid E' \in S(n) \}.
\]  
\end{defn}

It is straightforward to check that under the conditions of this definition, $E+S$ is a family of subtournaments and that 
the subtournaments coded by $E+S$ are exactly the subtournaments coded by $S$ unioned with $E$.

\begin{defn}
Let $S'$ and $S$ be families of subtournaments.  
We say $S'\leq S$ if for every $n$, there is an $m$ such that whenever $E' \in S'(n)$, there is an $E \in S(m)$ with $E' \subseteq E$.
\end{defn}

If $S' \leq S$, then $S'$ is similar to a subtree of $S$, but is more flexible.  For example, let $S(n) = \{[0,n]\}$ and let $S'(n) = \{ \text{Even}(n), 
\text{Odd}(n) \}$ where $\text{Even}(n)$ is the set of even numbers less than $n$ and $\text{Odd}(n)$ is the set of odd numbers less than $n$.  Then 
$S' \leq S$ despite the fact that $S$ contains a single branch and $S'$ contains two branches.  

\begin{lem}
Let $S$ and $S'$ be infinite families of subtournaments such that $S' \leq S$.  Every infinite subtournament coded by $S'$ is contained in an infinite 
subtournament coded by $S$.  
\end{lem}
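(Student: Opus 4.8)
The plan is to fix an infinite subtournament $U$ coded by $S'$, choose an infinite path $E_0' \subsetneq E_1' \subsetneq \cdots$ through the tree presentation of $S'$ with $U = \bigcup_n E_n'$, and then build an infinite path through $S$ whose union contains $U$. Since $S' \leq S$, for each $n$ there is a level $m(n)$ such that every member of $S'(n)$ --- in particular $E_n'$ --- is contained in some finite subtournament belonging to $S(m(n))$; fix a witness $E_n \in S(m(n))$ with $E_n' \subseteq E_n$. The first real step is a growth lemma isolating what we need from clause (A2) of Definition \ref{def:family}: for any node $E \in S(k)$ and any $\ell \leq k$, if $A$ is the (unique) ancestor of $E$ at level $\ell$ in the tree, then every element of $E \setminus A$ exceeds $\max(S(\ell))$. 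This follows by climbing the ancestor chain of $E$ from level $\ell$ to level $k$ one step at a time, using that at each step the newly added elements exceed the maximum of the subtournaments at the current level, which is in turn at least the maximum of the current ancestor. The consequence we use is: if in addition $E_n' \subseteq E$ and $E_n'$ is contained in some member of $S(\ell)$ (so that $\max(E_n') \leq \max(S(\ell))$), then $E_n'$ is disjoint from $E \setminus A$, hence $E_n' \subseteq A$.

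Next, observe that $m(n) \to \infty$, since otherwise every member of $\bigcup_n S'(n)$, and hence all of $U$, would lie in the finite union $\bigcup_{m \leq M} S(m)$ of finite subtournaments, contradicting that $U$ is infinite. Now apply K\"{o}nig's Lemma to the subtree $\widehat{S}$ of the tree presentation of $S$ whose nodes are those $E$ that are ancestors of $E_n$ for infinitely many $n$. This set is downward closed under the tree ordering, finitely branching (it is a subtree of $S$ and each $S(k)$ is finite), and infinite: given a level $\ell$, there are infinitely many $n$ with $m(n) \geq \ell$, and by the pigeonhole principle infinitely many of the corresponding $E_n$ share a common level-$\ell$ ancestor. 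Thread the resulting path $F_0 \subsetneq F_1 \subsetneq \cdots$ through $\widehat{S}$ so that for each $\ell$ the set $J_\ell = \{\, n : F_\ell \text{ is the level-}\ell \text{ ancestor of } E_n \,\}$ is infinite --- which the standard inductive construction of such a path provides --- and note that $J_{\ell+1} \subseteq J_\ell$, because the level-$(\ell+1)$ ancestor of a node determines its level-$\ell$ ancestor. Put $V = \bigcup_\ell F_\ell$; this is an infinite subtournament coded by $S$.

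Finally, to check $U \subseteq V$, fix $n$ and set $\ell = m(n)$. Choose $j \in J_\ell$ with $j \geq n$, which is possible since $J_\ell$ is infinite. Then $E_n' \subseteq E_j' \subseteq E_j$, the node $F_\ell$ is the level-$\ell$ ancestor of $E_j$, and $E_n'$ is contained in a member of $S(\ell) = S(m(n))$; so the growth lemma of the first paragraph yields $E_n' \subseteq F_\ell \subseteq V$. Hence $U = \bigcup_n E_n' \subseteq V$, as required. The only genuinely delicate point is that the covering levels $m(n)$ need not be monotone in $n$ --- a family need not be pruned, so covering $S'(n)$ at some level says nothing about coverings at higher levels --- which is exactly why K\"{o}nig's Lemma is run on a subtree of $S$ indexed by honest $S$-levels, with the auxiliary sets $J_\ell$, rather than by trying to align the levels of $S'$ and $S$ directly; granting the growth lemma from (A2), the rest is a routine compactness argument.
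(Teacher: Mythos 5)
Your proof is correct and follows the same overall approach as the paper's: fix an $S'$-path, use the covering levels $m(n)$, pass to a subtree of $S$, and apply K\"{o}nig's Lemma. Where you differ is in the amount of care taken at the final step. The paper's proof simply asserts that an infinite path $E_0 \subsetneq E_1 \subsetneq \cdots$ through the downward closure of the covering nodes satisfies $\bigcup_k E_k' \subseteq \bigcup_\ell E_\ell$ ``by the definition of the subtree,'' but this does not follow from the definition alone: an arbitrary path through the downward closure of the covers is a priori free to miss all of them, and even when it passes near a cover at level $m(n)$ it need not literally contain $E_n'$ unless one invokes the growth property from (A2). Your ``growth lemma'' (that an element of a node below $\max(S(\ell))$ must already appear in its level-$\ell$ ancestor), your observation that $m(n) \to \infty$, and your threading of the path so that each $J_\ell$ is infinite with $J_{\ell+1} \subseteq J_\ell$ are exactly the missing pieces that make the containment $U \subseteq V$ actually go through. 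So this is the same route, but your version is complete where the paper's is terse, and the point you flag at the end --- that $m(n)$ need not be monotone, which is why one runs K\"{o}nig on an $S$-indexed subtree with auxiliary index sets rather than aligning levels --- is the genuine subtlety here.
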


\begin{proof}
Let $E_0' \subsetneq E_1' \subsetneq \cdots$ be an infinite path in $S'$.  Let $m(k)$ be a function such that 
for each $k$, $E_k' \subseteq E$ for some $E \in S(m(k))$.  Consider the subtree of $S$ formed by taking the downward closure of all 
$E \in S(m(k))$ such that $E_k' \subseteq E$.  This subtree is infinite because $S' \leq S$ and therefore has an infinite path 
$E_0 \subsetneq E_1 \subsetneq \cdots$ because it is finitely branching.  By the definition of the subtree, $\cup E_k' \subseteq \cup E_k$ and 
hence the subtournament coded by $S'$ is contained in a subtournament coded by $S$.  
\end{proof}  

\begin{defn}
Our set of forcing conditions, $\mathbb{Q}^X_e$, is the set of triples $(F,I,S)$ such that
  \begin{itemize}
  \item $F$ is a finite, transitive subtournament of $T_e^X$,
  \item $I$ is a minimal interval of $F$, and
  \item $S$ is an infinite family of subtournaments such that $F < E$ for all $E \in S(0)$ and such that for each $n$, $E \subseteq I$ 
  for all $E \in S(n)$.
  \end{itemize}
We say $(F',I',S')\leq (F,I,S)$ if
\begin{itemize}
\item $F\subseteq F'$,
\item $I'$ is a sub-interval of $I$, and
\item $(F'\setminus F)+S'\leq S$.
\end{itemize}
\end{defn}

The last condition implies that there is an $n$ and an $E\in S(n)$ such that $F'\setminus F\subseteq E$.

There are several points worth noting about this definition.  First, for each $(F,I,S) \in \mathbb{Q}_e^X$, $F$ is extendible in $T_e^X$.  In particular, if 
$U$ is a subtournament coded by $S$ and $S \subseteq U$ is an infinite transitive set, then $F \cup S$ is a solution of $T_e^X$ because $S$ is contained 
in the minimal interval $I$ of $F$.  

Second, if $(F',I',S') \leq (F,I,S)$, then $F < x$ for all $x \in F' \setminus F$.  To see why, by the third condition in the definition of an extension, we can 
fix $n$ such that $F' \setminus F \subseteq \widehat{E}$ for some $\widehat{E} \in S(n)$.  Since $(F,I,S)$ is a condition, $F < E$ for all $E \in S(0)$.  
Applying Condition (A2) in Definition \ref{def:family}, $F < \widehat{E}$ and hence $F < (F' \setminus F)$, completing the explanation.  If $F, F' \in 
\mathbb{F}_e^X$, then we say $F'$ \textit{extends} $F$ if $F \subseteq F'$ and $F < (F' \setminus F)$.  Thus, if $(F',I',S') \leq (F,I,S)$, then $F'$ extends $F$.

Third, let $(F',I',S') \leq (F,I,S)$ and let $U$ be a subtournament coded by $S'$.  Because $(F' \setminus F) + S' \leq S$, there is a subtournament 
$V$ coded by $S$ such that $(F' \setminus F) \cup U \subseteq V$.  Therefore, when passing from $(F,I,S)$ to $(F',I',S')$, we extend the fixed 
part of our transitive solution from $F$ to $F'$ while narrowing the interval in $T_e^X$ where we can look for future transitive extensions from $I$ to $I'$ 
and further restricting where in $I'$ we can find these extensions in a way that is 
consistent with the restriction to the previous family $S$.

Our construction of the generic solution $G$ to $T_e^X$ will be spelled out in detail later, but it proceeds in the usual way.  We specify a sequence of conditions 
$(F_0,I_0,S_0) \geq (F_1, I_1, S_1) \geq \cdots$ meeting the appropriate requirements and set $G = \cup_n F_n$.  

Having defined our forcing conditions, we turn to the requirements and the notion of uniform density.  

\begin{defn}
A \emph{requirement} is a set $\mathcal{K}^{X,A^*(c),B^*(c)}$ together with a partial function $a_{\mathcal{K}}^X$ meeting the 
following conditions.
\begin{itemize}  
\item $\mathcal{K}^{X,A^*(c),B^*(c)}$ is a set of finite transitive subtournaments of $T_e^X$ which is closed under extensions and is defined by 
\[
\mathcal{K}^{X,A^*(c),B^*(c)} = \{ F \in \mathbb{F}_e^X \mid \exists a \in A^*(c) \, \exists b \in B^*(c) \, (R^X_{\mathcal{K}}(F,a,b)) \}
\]
for an $X$-computable relation $R^X_{\mathcal{K}}(x,y,z)$ which is symmetric in the $y$ and $z$ variables.  

\item $a_{\mathcal{K}}^X$ is a partial $X$-computable function with $\text{domain}(a_{\mathcal{K}}^X) \subseteq \mathbb{F}_e^X$ such that 
if $F'$ extends $F$ and $a_{\mathcal{K}}^X(F)$ converges, then $a_{\mathcal{K}}^X(F') = a_{\mathcal{K}}^X(F)$.  
\end{itemize}
\end{defn}

Note that $\mathcal{K}^{X,A^*(c),B^*(c)}$ is specified by a pair of indices, one for the relation $R^X_{\mathcal{K}}$ and one for the partial function 
$a_{\mathcal{K}}^X$.  

\begin{example}
\label{example:diag}
For each $m$, we define the requirement 
\[
\mathcal{W}_m^{X,A^*(c),B^*(c)} = \{ F \in \mathbb{F}_e^X \mid \exists a \in A^*(c) \, \exists b \in B^*(c) \, ( \Phi_{m}^{X \oplus F}(a) = 
\Phi_{m}^{X \oplus F}(b) = 1) \}
\]
together with the function $a_{\mathcal{W}_m}^X(F) = u$ where $u$ is the first number (if any) on which 
$\Phi_{m}^{X \oplus F}(u) = 1$ in appropriately dovetailed computations.  

Suppose a condition $(F,I,S)$ used to construct our generic $G$ satisfies $F \in \mathcal{W}_m^{X,A^*(c),B^*(c)}$.  Because $F$ is an initial segment of 
$G$, we have successfully diagonalized against $\Phi_m^{X \oplus G}$ computing a solution to $c$.  On the other hand, if $a_{\mathcal{W}_m}^X(F)$ 
diverges for every condition $(F,I,S)$ in the sequence defining $G$, then $\Phi_m^{X \oplus F}$ is empty for every initial segment $F$ of $G$ and 
hence $\Phi_m^{X \oplus G}$ does not compute a solution to $c$.  
\end{example}

As in the previous construction, we can replace the sets $A^*(c)$ and $B^*(c)$ in a requirement $\mathcal{K}^{X,A^*(c),B^*(c)}$ by 
finite sets $A$ and $B$ and consider the set
\[
\mathcal{K}^{X,A,B} = \{ F \in \mathbb{F}_e^X \mid \exists a \in A \, \exists b \in B \, (R^X_{\mathcal{K}}(F,a,b)) \}.
\]
Typically, we will work in this context with $A = \{ a_{\mathcal{K}}^X(F) \}$ for some fixed $F$ for which $a_{\mathcal{K}}^X(F)$ converges.  We 
abuse notation and write $\mathcal{K}^{X,a_{\mathcal{K}}^X(F),B}$ in this situation.

\begin{defn}
\label{def:EMessential}
We say $\mathcal{K}^{X}$ is \emph{essential} below $(F,I,S)$ if $a_{\mathcal{K}}^X(F)$ converges and for every $x$ there is a finite set 
$B>x$ and a level $n$ such that whenever $E \in S(n)$ and $E = E_0\cup E_1$ is a partition, there is an $i\in\{0,1\}$ and a transitive $F' \subseteq E_i$ such that $F\cup F' \in \mathcal{K}^{X,a_{\mathcal{K}}^X(F),B}$.
\end{defn}

\begin{defn}
\label{def:EMud}
We say $\mathcal{K}^{X,A^*(c),B^*(c)}$ is \emph{uniformly dense} if whenever $\mathcal{K}^{X}$ is essential below $(F,I,S)$, there is some 
level $n$ such that whenever $E \in S(n)$ and $E = E_0\cup E_1$ is a partition, there is an $i\in\{0,1\}$ and a transitive $F' \subseteq E_i$ such that 
$F\cup F' \in \mathcal{K}^{X,A^*(c),B^*(c)}$.
\end{defn}

\begin{defn}
\label{def:settle}
We say $(F,I,S)$ \emph{settles} $\mathcal{K}^{X,A^*(c),B^*(c)}$ if $a_{\mathcal{K}}^X(F)$ converges and 
either $F\in \mathcal{K}^{X,A^*(c),B^*(c)}$ or there is an $x$ such that 
whenever $E \in S(n)$ is on an infinite path through $S$ and $F' \subseteq E$ is transitive, $F\cup F' \not\in \mathcal{K}^{X,a_{\mathcal{K}}^X(F),(x,\infty)}$.
\end{defn}

We give one example to illustrate settling and prove one essential property of this notion.

\begin{example}
\label{example:diag2}
Suppose $(F,I,S)$ settles $\mathcal{W}_m^{X,A^*(c),B^*(c)}$.  We claim that if $(F,I,S)$ appears in a sequence defining a generic $G$,  then 
$\Phi_m^{X \oplus G}$ is not a solution to $c$.  If $(F,I,S) \in \mathcal{W}_m^{X,A^*(c),B^*(c)}$,  then this claim was verified in Example \ref{example:diag}.  
So, assume that $(F,I,S)$ settles $\mathcal{W}_m^{X,A^*(c),B^*(c)}$ via the second condition in this definition and fix the witness $x$.  We claim 
that for all $(\tilde{F},\tilde{I},\tilde{S}) \leq (F,I,S)$ and all $b > x$, $\Phi_m^{X \oplus \tilde{F}}(b) \neq 1$.  It follows immediately from this claim that 
$\Phi_m^{X \oplus G}$ is finite and hence is not a solution to $c$.  

To prove the claim, fix $(\tilde{F}, \tilde{I}, \tilde{S}) \leq (F,I,S)$.  By the definition of $a_{\mathcal{W}_m}^X(F)$, we know that 
$\Phi_m^{X \oplus F}(a_{\mathcal{W}_m}^X(F)) = 1$.  Since $\tilde{F}$ extends $F$, we have 
$\Phi_m^{X \oplus \tilde{F}}(a_{\mathcal{W}_m}^X(F)) = 1$.  Suppose for a contradiction that there is a 
$b > x$ such that $\Phi_m^{X \oplus \tilde{F}}(b) = 1$.  Then 
\[
\exists b > x \, \big(\Phi_m^{X \oplus \tilde{F}}(a_{\mathcal{W}_m}^X(F)) = \Phi_m^{X \oplus{F}}(b) = 1 \big)
\]
and hence $\tilde{F} \in \mathcal{W}_m^{X,a_{\mathcal{W}_m}^X(F),(x,\infty)}$.

Let $F' = \tilde{F} \setminus F$, so $F \cup F' \in \mathcal{W}_m^{X,a_{\mathcal{W}_m}^X(F),(x,\infty)}$.  Because $(\tilde{F}, \tilde{I}, \tilde{S}) \leq (F,I,S)$, 
we have $(\tilde{F} \setminus F) + S' = F' + S' \leq S$ and hence there is a level $n$ and an $E \subseteq S(n)$ such that $F' \subseteq E$.  Therefore, 
$F'$ shows that our fixed $x$ does not witness the second condition for $(F,I,S)$ to settle $\mathcal{W}_m^{X,A^*(c),B^*(c)}$ giving the desired 
contradiction.
\end{example}

\begin{lem}
\label{lem:downsettle}
If $(F,I,S)$ settles $\mathcal{K}^{X,A^*(c),B^*(c)}$ and $(\tilde{F},\tilde{I},\tilde{S}) \leq (F,I,S)$, then $\tilde{(F},\tilde{I},\tilde{S})$ settles $\mathcal{K}^{X,A^*(c),B^*(c)}$.
\end{lem}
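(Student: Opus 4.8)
The plan is to unfold the definition of ``settles'' (Definition \ref{def:settle}) for $(F,I,S)$ and transfer each clause to $(\tilde F,\tilde I,\tilde S)$, using the basic properties of the extension relation on $\mathbb{Q}_e^X$. First, since $(\tilde F,\tilde I,\tilde S)\leq(F,I,S)$, the finite transitive set $\tilde F$ extends $F$ (this was noted just after the definition of $\leq$ on $\mathbb{Q}_e^X$: $F\subseteq\tilde F$ and $F<(\tilde F\setminus F)$). By the second defining property of a requirement, $a_{\mathcal{K}}^X(F)\!\downarrow$ implies $a_{\mathcal{K}}^X(\tilde F)\!\downarrow$ and $a_{\mathcal{K}}^X(\tilde F)=a_{\mathcal{K}}^X(F)$; since $(F,I,S)$ settles $\mathcal{K}^{X,A^*(c),B^*(c)}$ we have $a_{\mathcal{K}}^X(F)\!\downarrow$, so the convergence clause of ``settles'' holds for $(\tilde F,\tilde I,\tilde S)$ with the same value.

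Next I would split into the two cases of Definition \ref{def:settle} for $(F,I,S)$. In the first case, $F\in\mathcal{K}^{X,A^*(c),B^*(c)}$; since $\mathcal{K}^{X,A^*(c),B^*(c)}$ is closed under extensions and $\tilde F$ extends $F$, we get $\tilde F\in\mathcal{K}^{X,A^*(c),B^*(c)}$, so the first clause of ``settles'' holds for $(\tilde F,\tilde I,\tilde S)$ and we are done. In the second case, fix the witness $x$ for $(F,I,S)$: whenever $E\in S(n)$ lies on an infinite path through $S$ and $F'\subseteq E$ is transitive, $F\cup F'\notin\mathcal{K}^{X,a_{\mathcal{K}}^X(F),(x,\infty)}$. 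I claim the same $x$ works for $(\tilde F,\tilde I,\tilde S)$. Let $\tilde E\in\tilde S(\tilde n)$ be on an infinite path through $\tilde S$ and let $\tilde F'\subseteq\tilde E$ be transitive; we must show $\tilde F\cup\tilde F'\notin\mathcal{K}^{X,a_{\mathcal{K}}^X(\tilde F),(x,\infty)}$. Using $(\tilde F\setminus F)+\tilde S\leq S$, the set $(\tilde F\setminus F)\cup\tilde E$ — and in particular any infinite path through $\tilde S$ extended by $\tilde F\setminus F$ — is contained in an infinite path through $S$; so there is a level $n$ and an $E\in S(n)$ on an infinite path through $S$ with $(\tilde F\setminus F)\cup\tilde F'\subseteq E$. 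Put $F'=(\tilde F\setminus F)\cup\tilde F'\subseteq E$; this is transitive (it is a subset of the transitive set $\tilde F\cup\tilde F'$, which is transitive because $\tilde F'\subseteq\tilde E$ lies in a single branch and hence in the minimal interval where extensions are sought — this is exactly the ``$F\cup S$ is a solution'' remark after the definition of $\mathbb{Q}_e^X$, but for our purposes we only need $F\cup F'=\tilde F\cup\tilde F'$ is transitive). Then $F\cup F'=\tilde F\cup\tilde F'$, and since $a_{\mathcal{K}}^X(\tilde F)=a_{\mathcal{K}}^X(F)$, the witness property for $(F,I,S)$ applied to $E$ and $F'$ gives $\tilde F\cup\tilde F'=F\cup F'\notin\mathcal{K}^{X,a_{\mathcal{K}}^X(F),(x,\infty)}=\mathcal{K}^{X,a_{\mathcal{K}}^X(\tilde F),(x,\infty)}$, as required.

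The only subtle point — the step I expect to be the main obstacle in writing this cleanly — is the passage ``$\tilde E$ on an infinite path through $\tilde S$ $\Rightarrow$ $(\tilde F\setminus F)\cup\tilde E\subseteq E$ for some $E\in S(n)$ on an infinite path through $S$.'' This requires combining the relation $(\tilde F\setminus F)+\tilde S\leq S$ with the compactness argument used in the lemma comparing codes of families under $\leq$ (the one showing that every infinite subtournament coded by $S'\leq S$ is contained in one coded by $S$): one takes the infinite path $\tilde E_0\subsetneq\tilde E_1\subsetneq\cdots$ through $\tilde S$ containing $\tilde E$, forms $(\tilde F\setminus F)\cup\tilde E_k$, and uses $(\tilde F\setminus F)+\tilde S\leq S$ together with König's lemma on the finitely branching tree $S$ to extract an infinite path through $S$ each of whose nodes contains a corresponding $(\tilde F\setminus F)\cup\tilde E_k$; the node at the appropriate level contains $(\tilde F\setminus F)\cup\tilde E\supseteq F'$. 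Once this is in place the rest is bookkeeping with the extension-closure of $\mathcal{K}^{X,A^*(c),B^*(c)}$ and the invariance of $a_{\mathcal{K}}^X$ under extension.
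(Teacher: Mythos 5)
Your proposal is correct and follows essentially the same path as the paper's proof: case split on Definition~\ref{def:settle}, extension-closure of $\mathcal{K}^{X,A^*(c),B^*(c)}$ for the first case, and for the second case the key observation that $(\tilde{F}\setminus F)+\tilde{S}\leq S$ lets you transport $(\tilde{F}\setminus F)\cup\tilde{F}'$ into some $E\in S(n)$ on an infinite path so that $F'=(\tilde{F}\setminus F)\cup\tilde{F}'$ and $F\cup F'=\tilde{F}\cup\tilde{F}'$ yield the contradiction. You additionally spell out the transitivity of $F'$ and the K\"onig's-lemma justification of the path-transfer step, both of which the paper leaves implicit; these are correct fillings-in of small gaps rather than a different route.
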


\begin{proof}
Assume $(F,I,S)$ settles $\mathcal{K}^{X,A^*(c),B^*(c)}$ and fix $(\tilde{F},\tilde{I},\tilde{S}) \leq (F,I,S)$.  Since $a_{\mathcal{K}}^X(F)$ converges and 
$\tilde{F}$ is an extension of $F$, we know that $a_{\mathcal{K}}^X(\tilde{F})$ converges.

If $F \in \mathcal{K}^{X,A^*(c),B^*(c)}$, then since 
$\mathcal{K}^{X,A^*(c),B^*(c)}$ is closed under extensions, $\tilde{F} \in \mathcal{K}^{X,A^*(c),B^*(c)}$ and $(\tilde{F},\tilde{I},\tilde{S})$ settles 
$\mathcal{K}^{X,A^*(c),B^*(c)}$.

On the other hand, suppose $(F,I,S)$ settles $\mathcal{K}^{X,A^*(c),B^*(c)}$ via the second condition in Definition \ref{def:settle} with the witness $x$. 
We claim that the same witness $x$ works to show that $\tilde{(F},\tilde{I},\tilde{S})$ settles $\mathcal{K}^{X,A^*(c),B^*(c)}$.  Suppose for a contradiction 
that there is an $\tilde{E} \in \tilde{S}(\tilde{n})$ on an infinite path through $\tilde{S}$ and a transitive $\tilde{F}' \subseteq \tilde{E}$ such that 
$\tilde{F} \cup \tilde{F}' \in \mathcal{K}^{X,a_{\mathcal{K}}^X(\tilde{F}),(x,\infty)}$.  Note that $\tilde{F} \cap \tilde{F}' = \emptyset$ because 
$\tilde{F}' \subseteq \tilde{E} \in \tilde{S}(\tilde{n})$.  

Let $F' = (\tilde{F} \cup \tilde{F}') \setminus F$, so $F \cup F' \in \mathcal{K}^{X,a_{\mathcal{K}}^X(\tilde{F}),(x,\infty)} = 
\mathcal{K}^{X,a_{\mathcal{K}}^X(F),(x,\infty)}$.  Because $\tilde{F} \cap \tilde{F}' = \emptyset$, we have $F' = (\tilde{F} \setminus F) \cup \tilde{F}'$.  
It follows from $(\tilde{F} \setminus F) + \tilde{S} \leq S$ that there is a level $n$ and an $E \in S(n)$ on an infinite path in $S$ such that 
$(\tilde{F} \setminus F) \cup \tilde{E} \subseteq E$.  Therefore, $F' \subseteq E \in S(n)$ and $F \cup F' \in \mathcal{K}^{X,a_{\mathcal{K}}^X(F),(x,\infty)}$ 
contradicting the fact that $x$ was a witness for $(F,I,S)$ settling $\mathcal{K}^{X,A^*(c),B^*(c)}$ via the second condition in Definition \ref{def:settle}.
\end{proof}

The heart of this construction is the following theorem.

\begin{thm}
\label{thm:settle}
Let $\mathcal{K}^{X,A^*(c),B^*(c)}$ be a uniformly dense requirement and let $(F,I,S)$ be a condition for which $a_{\mathcal{K}}^X(F)$ converges.  
There is an extension $(F',I',S') \leq (F,I,S)$ settling $\mathcal{K}^{X,A^*(c),B^*(c)}$.  
\end{thm}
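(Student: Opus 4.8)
The plan is to argue by cases on whether $\mathcal{K}^X$ is essential below $(F,I,S)$, after first observing that if $F\in\mathcal{K}^{X,A^*(c),B^*(c)}$ then $(F,I,S)$ already settles $\mathcal{K}^{X,A^*(c),B^*(c)}$ by the first clause of Definition \ref{def:settle} (recall $a_{\mathcal{K}}^X(F)$ converges by hypothesis). So assume $F\notin\mathcal{K}^{X,A^*(c),B^*(c)}$.

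\emph{Case 1: $\mathcal{K}^X$ is not essential below $(F,I,S)$.} Negating Definition \ref{def:EMessential}, I fix $x$ such that for every finite $B>x$ and every level $n$ some $E\in S(n)$ has a partition $E=E_0\cup E_1$ into two \emph{$B$-good} halves, where $H$ is $B$-good if $F\cup F'\notin\mathcal{K}^{X,a_{\mathcal{K}}^X(F),B}$ for every transitive $F'\subseteq H$. Let $T^*$ be the set of $E\in S$ (at any level) admitting a partition into two halves that are $B$-good for \emph{every} finite $B>x$. The first step is to check that $T^*$ is an infinite subtree of $S$: it is downward closed, because if $E=H_0\cup H_1$ is such a partition and $E'$ is the $S$-predecessor of $E$ then $(E'\cap H_0)\cup(E'\cap H_1)$ works for $E'$; and $T^*$ meets every level, because for fixed $n$ the nonempty sets of level-$n$ nodes admitting a $B$-good partition decrease as $B$ grows along the directed family of finite subsets of $(x,\infty)$, so (as $S(n)$ is finite) they have a common member $E^{*}$, and since $E^{*}$ has only finitely many partitions one of them is $B$-good cofinally often in $B$, hence for all $B>x$. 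By K\"onig's Lemma pick an infinite path $E_0\subsetneq E_1\subsetneq\cdots$ through $T^*$; a compactness argument on $2^{\bigcup_m E_m}$ (the relevant clopen sets have nonempty finite intersections, again by downward closure) then produces a single $c\colon\bigcup_m E_m\to\{0,1\}$ restricting to a partition of each $E_m$ into two halves that are $B$-good for all $B>x$. One colour class, say $c^{-1}(1)$, is infinite; letting $S'$ be the family whose $m$-th level is the singleton $\{\{t_0,\dots,t_{m-1}\}\}$ where $t_0<t_1<\cdots$ enumerates $c^{-1}(1)$, one verifies $S'$ is an infinite family with $(F,I,S')\le(F,I,S)$ ((A1)/(A2) are immediate and $S'\le S$ holds because each $\{t_0,\dots,t_{m-1}\}$ lies in some $E_m$). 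Each member of $S'$ is a subset of some $c^{-1}(1)\cap E_m$, hence is $B$-good for all $B>x$, so $(F,I,S')$ settles $\mathcal{K}^{X,A^*(c),B^*(c)}$ by the second clause of Definition \ref{def:settle}, with witness $x$.

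\emph{Case 2: $\mathcal{K}^X$ is essential below $(F,I,S)$.} Uniform density supplies a level $n$ with the \emph{partition property}: for every $E\in S(n)$ and partition $E=E_0\cup E_1$ there are $i$ and a transitive $F'\subseteq E_i$ with $F\cup F'\in\mathcal{K}^{X,A^*(c),B^*(c)}$; the same induced-partition argument as above shows this persists at all levels $\ge n$. I would then refine $S$: choose an infinite subtournament $U$ coded by a path through $S$ and an infinite transitive $\Sigma=\{\sigma_0<\sigma_1<\cdots\}\subseteq U$ (every infinite tournament has one, cf.\ the proof of Lemma \ref{lem:onepoint}), and let $S_0$ have $m$-th level the singleton $\{\{\sigma_0,\dots,\sigma_{m-1}\}\}$; then $S_0$ has transitive members and $(F,I,S_0)\le(F,I,S)$. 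If $\mathcal{K}^X$ is not essential below $(F,I,S_0)$, apply Case 1 to $(F,I,S_0)$ (whose output refines $(F,I,S)$) and we are done. Otherwise fix the partition property for $S_0$ at a level $n_0$, fix $M\ge n_0$, set $E_M=\{\sigma_0,\dots,\sigma_{M-1}\}$, and apply Lemma \ref{lem:partition1} to the infinite tournament $F\cup\Sigma$, the finite transitive set $F$, its minimal interval $I$ (infinite in $F\cup\Sigma$ because $\Sigma$ is), and $J=E_M$: this yields a partition $E_M=P\cup Q$ with $F\cup P$ and $F\cup Q$ each extendible in $F\cup\Sigma$ and each containing a minimal interval $\subseteq I$ that is infinite in $F\cup\Sigma$. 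Applying the partition property at level $M$ to $E_M=P\cup Q$ gives $i$ and a transitive $F'\subseteq(E_M)_i$ with $F\cup F'\in\mathcal{K}^{X,A^*(c),B^*(c)}$; say $F'\subseteq P$. Let $I_P\subseteq I$ be a minimal interval of $F\cup P$ with $\Sigma\cap I_P$ infinite, let $I''$ be the (necessarily unique) minimal interval of $F\cup F'$ containing $I_P$ — this lies in $I$ since $F\cup F'\subseteq F\cup P$ and $F'\subseteq I$ — and let $S''$ be the singleton family coding the increasing enumeration of $\Sigma\cap I_P$ with the finitely many members $\le\max(F\cup F')$ deleted. One checks $(F\cup F',I'',S'')$ is a condition, that $(F'\setminus F)+S''\le S_0$ (all sets involved are finite subsets of $\Sigma$), hence $(F\cup F',I'',S'')\le(F,I,S)$; since $F\cup F'\in\mathcal{K}^{X,A^*(c),B^*(c)}$ and $a_{\mathcal{K}}^X(F\cup F')$ converges (it extends $F$), it settles $\mathcal{K}^{X,A^*(c),B^*(c)}$ by the first clause.

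I expect the main obstacle to be Case 2: the conclusions of Lemma \ref{lem:partition1} only assert that certain intervals are ``infinite in $T$'', whereas a condition needs its family component to be genuinely infinite, and the transitive set $F'$ handed to us by the partition property need not be compatible with any infinite subtournament already coded by $S$. The device for getting around this is to pass first to a family $S_0$ whose members are transitive and then to apply the partition theorem \emph{relative to} $F\cup\Sigma$ rather than to $T_e^X$; the cost is that essentiality can be destroyed by this refinement, which is exactly why Case 2 is allowed to route back into Case 1. The remaining points — verifying that the families $S'$, $S_0$, $S''$ really satisfy (A1), (A2) and refine the given family — are routine (A2)-bookkeeping.
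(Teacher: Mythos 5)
The families you construct are not $X$-computable, and this is a genuine gap rather than bookkeeping.  Definition~\ref{def:family} requires a family of subtournaments to be a function computable from $X$ (represented by an index), and a condition in $\mathbb{Q}_e^X$ must have such a family as its third component; the later argument in Lemma~\ref{lem:dense}, and the ground forcing, actually use that index (e.g.\ evaluating $S_q^{X\oplus F}(n)$ inside the requirement $\mathcal{T}_{e',\mathcal{K},q}^{X,A^*(c),B^*(c)}$).  In your Case~1, the tree $T^*$ is defined by the predicate ``admits a partition into halves that are $B$-good for \emph{every} finite $B>x$,'' which is $\Pi^{0,X}_1$, so $T^*$ is only co-c.e.\ in $X$; the path through $T^*$ comes from K\"onig's Lemma, the single coloring $c$ comes from a further compactness argument, and the enumeration $t_0<t_1<\cdots$ of $c^{-1}(1)$ inherits all of this non-effectiveness.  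The singleton family $S'$ built from that enumeration therefore fails to be a family in the sense of the paper, so $(F,I,S')$ is not a condition.  The same problem recurs in Case~2: $\Sigma$ is an arbitrarily chosen infinite transitive subset of a set coded by $S$ (Erd\"os--Moser gives no effective way to find one), so $S_0$ and the later $S''$ are again not $X$-computable.

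The paper's proof is organized precisely to dodge this.  In the non-essential case it replaces ``$B$-good for all $B>x$'' with the $X$-computable truncated predicate $Q(E,n)$ (testing only $B=(x,x+n+2)$ at level $n$), defines the $X$-computable subtree $S'(n)=\{E\in S(n)\mid Q(E,n)\}$, builds an explicit $X$-computable tree $T_0$ of left halves, and then extracts $S''$ by a two-subcase construction that stays effective.  In the essential case it does not pass to an arbitrary infinite transitive set and apply the unproved motivational Lemma~\ref{lem:partition1}; it proves and uses Lemma~\ref{family_split}, a partition statement about \emph{families}, whose proof via Lemma~\ref{family_partition} manifestly produces an $X$-computable refinement.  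Your top-level case split (essential vs.\ not essential, settle by the first clause vs.\ the second) matches the paper, and the idea of using uniform density to land a transitive $F'$ with $F\cup F'\in\mathcal{K}^{X,A^*(c),B^*(c)}$ is right; but without effective versions of the family-refinement steps the argument does not produce a legal condition $(F',I',S')\in\mathbb{Q}_e^X$, so the theorem as stated is not established.
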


We prove Theorem \ref{thm:settle} at the end of this subsection after showing how it is used to construct our generic $G$ and verifying that $X \oplus G$ 
does not compute a solution to $c$ and that for any index $e'$ such that $\Phi_{e'}^{X \oplus G}$ defines an infinite tournament, 
the associated requirements $\mathcal{K}^{X \oplus G, A^*(c),B^*(c)}$ are uniformly dense.  

To define $G$, let $\mathcal{K}_n^{X,A^*(c),B^*(c)}$, for $n \in \omega$, be a list of all the requirements.  We define a sequence of conditions 
\[
(F_0,I_0,S_0) \geq (F_1,I_1,S_1) \geq \cdots
\]
by induction.  Let $F_0 = \emptyset$, $I_0 = (-\infty,\infty)$ and $S_0(n) = \{[0,n]\}$.  Assume $(F_k,I_k,S_k)$ has been defined.  Let $n$ be the least index 
such that $a_{\mathcal{K}_n}^X(F_k)$ converges and $\mathcal{K}_n^{X,A^*(c),B^*(c)}$ is not settled by $(F_k,I_k,S_k)$.  Applying Theorem \ref{thm:settle}, 
we choose $(F_{k+1},I_{k+1},S_{k+1})$ so that it settles $\mathcal{K}_n^{X,A^*(c),B^*(c)}$.  We define our generic by $G  = \cup F_n$.

The next lemma shows that we eventually settle each condition that is not trivially satisfied.  

\begin{lem}
\label{lem:settle}
Let $\mathcal{K}_n^{X,A^*(c),B^*(c)}$ be a requirement and let $(F_j,I_j,S_j)$ be the sequence of conditions defining $G$.  If there is an $i$ such 
that $a_{\mathcal{K}_n}^X(F_i)$ converges, then there is an index $k$ such that $(F_k,I_k,S_k)$ settles $\mathcal{K}_n^{X,A^*(c),B^*(c)}$.  
\end{lem}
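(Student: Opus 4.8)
The plan is to note that, despite its priority-style phrasing, this construction has no injury: each index is given attention at most once, so it suffices to show that $\mathcal{K}_n^{X,A^*(c),B^*(c)}$ is eventually forced to be the requirement given attention.

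First I would isolate two permanence facts. Since each $F_{k+1}$ extends $F_k$ in the sense of the ordering on $\mathbb{Q}_e^X$ (so in particular $F_k\subseteq F_{k+1}$), the monotonicity clause in the definition of a requirement gives that $a_{\mathcal{K}_n}^X(F_j)$ converges, with value $a_{\mathcal{K}_n}^X(F_i)$, for every $j\ge i$ once $a_{\mathcal{K}_n}^X(F_i)$ converges. And by Lemma \ref{lem:downsettle}, once some $(F_k,I_k,S_k)$ settles a requirement, so does every later (hence $\le$-smaller) condition in the sequence. Call $\mathcal{K}_m$ the requirement \emph{given attention at stage $k$} if $m$ is the least index such that $a_{\mathcal{K}_m}^X(F_k)$ converges and $\mathcal{K}_m$ is not settled by $(F_k,I_k,S_k)$; this is precisely the index chosen in the construction at stage $k$, and by Theorem \ref{thm:settle} the condition $(F_{k+1},I_{k+1},S_{k+1})$ settles it. By the two permanence facts, $\mathcal{K}_m$ is then settled by every $(F_j,I_j,S_j)$ with $j>k$ and so is never again given attention. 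Hence every index is given attention at most once over the whole construction; in particular only finitely many stages give attention to an index smaller than $n$.

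Then I would finish by contradiction. Suppose $a_{\mathcal{K}_n}^X(F_i)$ converges but no $(F_k,I_k,S_k)$ settles $\mathcal{K}_n^{X,A^*(c),B^*(c)}$. Then for every $k\ge i$ the value $a_{\mathcal{K}_n}^X(F_k)$ converges and $\mathcal{K}_n$ is unsettled by $(F_k,I_k,S_k)$, so at stage $k$ there is a least index needing attention and it is $\le n$. Let $k_0$ be larger than every stage at which an index smaller than $n$ is given attention (there are at most $n$ such stages), and put $k^\ast=\max(k_0,i)$. At stage $k^\ast$ the least index needing attention cannot be any $m<n$, else $\mathcal{K}_m$ would be given attention at the stage $k^\ast\ge k_0$; so that index is $n$, meaning $\mathcal{K}_n$ is given attention at stage $k^\ast$ and $(F_{k^\ast+1},I_{k^\ast+1},S_{k^\ast+1})$ settles it, a contradiction. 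I expect the only delicate point to be the first permanence fact — that $a_{\mathcal{K}_n}^X$ remains defined and constant along the generic sequence — but this is immediate from the definition of a requirement once one notes $F_k\subseteq F_{k+1}$; the rest is bookkeeping resting on Theorem \ref{thm:settle} and Lemma \ref{lem:downsettle}, which are already in hand.
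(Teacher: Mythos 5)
Your proof is correct and follows the same strategy as the paper's: convergence of $a_{\mathcal{K}_n}^X$ and the settled-state are both persistent along the generic sequence, so after finitely many stages no index below $n$ can ever again be the least one needing attention, at which point $\mathcal{K}_n$ either is already settled or is chosen and settled by Theorem \ref{thm:settle}. You spell out the "each index is given attention at most once" bookkeeping that the paper leaves implicit in its "Therefore there must be a stage $k$\ldots" step, but the underlying argument is the same.
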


\begin{proof}
Fix the least $i$ such that $a_{\mathcal{K}_n}^X(F_i)$ converges.  For all $j \geq i$, $a_{\mathcal{K}_n}^X(F_j)$ converges because $F_j$ extends $F_i$. 
By Lemma \ref{lem:downsettle}, once a requirement $\mathcal{K}_n^{X,A^*(c),B^*(c)}$ has been settled by a condition in the sequence 
defining $G$, it remains settled by all future conditions.  Therefore there must be a stage $k \geq i$ at which for all $n'<n$ such that $a_{\mathcal{K}_{n'}}^X(F_k)$ converges,  $\mathcal{K}_{n'}^{X,A^*(c),B^*(c)}$ is settled by $(F_k,I_k,S_k)$.  Either $\mathcal{K}_n^{X,A^*(c),B^*(c)}$ is already settled by $(F_k,I_k,S_k)$, or, by construction, $(F_{k+1}, I_{k+1}, S_{k+1})$ 
settles $\mathcal{K}_n^{X,A^*(c),B^*(c)}$.
\end{proof}

We can now verify the properties of $G$ starting with the fact that $X \oplus G$ does not compute a solution to $c$.  

\begin{lem}
\label{lem:nosolve}
$X \oplus G$ does not compute a solution to $c$.
\end{lem}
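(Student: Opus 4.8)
The plan is to fix an arbitrary index $m$ and argue that $\Phi_m^{X \oplus G}$ is not a solution to $c$; since every $(X \oplus G)$-computable set arises as some such $\Phi_m^{X \oplus G}$, this yields the lemma. The case division will be on the behaviour of the partial function $a_{\mathcal{W}_m}^X$ along the sequence of conditions $(F_0,I_0,S_0) \geq (F_1,I_1,S_1) \geq \cdots$ defining $G$, where $\mathcal{W}_m^{X,A^*(c),B^*(c)}$ and $a_{\mathcal{W}_m}^X$ are the diagonalization requirement and its associated partial function from Example \ref{example:diag}. Note that $\mathcal{W}_m^{X,A^*(c),B^*(c)}$ is one of the requirements $\mathcal{K}_n^{X,A^*(c),B^*(c)}$ enumerated in the construction, and is uniformly dense by the standing hypothesis on $X$.

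First I would treat the case in which $a_{\mathcal{W}_m}^X(F_j)$ diverges for every $j$. By the definition of $a_{\mathcal{W}_m}^X$ as the dovetailed search for the least $u$ with $\Phi_m^{X \oplus F_j}(u) = 1$, divergence at every $F_j$ means that no $\Phi_m^{X \oplus F_j}$ enumerates any element; since $G = \bigcup_j F_j$ and any convergent computation $\Phi_m^{X \oplus G}(u) = 1$ has finite use and is therefore already witnessed by some $F_j$, the set $\Phi_m^{X \oplus G}$ is empty and in particular not an infinite homogeneous set. (This is precisely the observation already recorded at the end of Example \ref{example:diag}.) In the remaining case, $a_{\mathcal{W}_m}^X(F_i)$ converges for some $i$, and then Lemma \ref{lem:settle} produces a $k$ such that $(F_k,I_k,S_k)$ settles $\mathcal{W}_m^{X,A^*(c),B^*(c)}$. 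Since $(F_k,I_k,S_k)$ occurs in the sequence defining $G$, Example \ref{example:diag2} applies verbatim and gives that $\Phi_m^{X \oplus G}$ is not a solution to $c$: if $F_k \in \mathcal{W}_m^{X,A^*(c),B^*(c)}$ then $\Phi_m^{X \oplus G}$ contains both an element of $A^*(c)$ and an element of $B^*(c)$, hence is not homogeneous by stability of $c$; and if settling holds via its second clause with witness $x$, then no extension $\tilde{F}$ of $F_k$ (in particular no $F_j$ with $j \geq k$) can put any $b > x$ into $\Phi_m^{X \oplus \tilde{F}}$, so $\Phi_m^{X \oplus G}$ is finite.

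There is no substantial obstacle here: the lemma is essentially assembled from Example \ref{example:diag}, Example \ref{example:diag2}, and Lemma \ref{lem:settle}, all already in hand. The only points deserving a moment's care are that the two cases above are genuinely exhaustive (either $a_{\mathcal{W}_m}^X$ diverges on every $F_j$, or it converges on some $F_i$, and the latter is exactly the hypothesis of Lemma \ref{lem:settle}), and that ``solution to $c$'' is being used in the sense appropriate to a stable coloring --- an infinite homogeneous set, hence necessarily an infinite subset of $A^*(c)$ or of $B^*(c)$ --- so that containing a member of each of $A^*(c)$ and $B^*(c)$ really does preclude being a solution.
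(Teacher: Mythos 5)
Your proof is correct and follows essentially the same route as the paper's: a case split on whether $a_{\mathcal{W}_m}^X$ ever converges along the sequence defining $G$ (equivalently, whether $\Phi_m^{X\oplus G}$ ever outputs $1$), with the nontrivial case handled by Lemma \ref{lem:settle} and Example \ref{example:diag2}. The only difference is cosmetic: the paper phrases the case division directly in terms of whether $\Phi_m^{X\oplus G}(u)=1$ for some $u$, while you phrase it in terms of $a_{\mathcal{W}_m}^X$.
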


\begin{proof}
Fix an index $m$ and we show that $\Phi_m^{X \oplus G}$ is not a solution to $c$ using the requirement $\mathcal{W}_m^{X,A^*(c),B^*(c)}$.  
If $\Phi_m^{X \oplus G}(u)$ is never equal to $1$, then $\Phi_m^{X \oplus G}$ does not compute an infinite set and 
we are done.  Therefore, assume that $\Phi_m^{X \oplus G}(u)=1$ for some $u$.  In this case, $a_{\mathcal{W}_m}^X(F_i)$ converges for some $i$ and hence   
$\mathcal{W}_m^{X,A^*(c),B^*(c)}$ is settled by some condition $(F_k,I_k,S_k)$ in the sequence defining $G$.  In Example 
\ref{example:diag2} we verified that if $\mathcal{W}_m^{X,A^*(c),B^*(c)}$ is settled by a condition in a sequence defining a generic $G$, then 
$\Phi_m^{X \oplus G}$ does not compute a solution to $c$.
\end{proof}

Next, we describe the requirements forcing uniform density at the next level.  To specify a potential requirement at the next level, we need to fix three 
indices: an index $e'$ for a potential infinite transitive tournament $T_{e'}^{X \oplus G}$ and indices for the relation $R_{\mathcal{K}}^{X \oplus G}$ 
(defining $\mathcal{K}^{X \oplus G,A^*(c),B^*(c)}$) and the function $a_{\mathcal{K}}^{X \oplus G}$.  We regard the indices for 
$R_{\mathcal{K}}^{X \oplus G}$ and $a_{\mathcal{K}}^{X \oplus G}$ as a pair $\mathcal{K}$ and will represent this choice of indices by indicating $e'$ and 
$\mathcal{K}$.  For each choice of these indices and each $q = (F_q, I_q, S_q)$, representing a potential condition in $\mathbb{Q}_{e'}^{X \oplus G}$, we will 
have a requirement $\mathcal{T}_{e',\mathcal{K},q}^X$.  

To describe these requirements, we fix some forcing definitions for a fixed index $e'$.  
These definitions are exactly what one expects and in each case reflect that $F$ is a 
big enough to guarantee the convergence of relevant computations for any generic $G$ extending $F$. In these definitions,   
$F$ is a finite transitive subtournament of $T_e^X$.  $E$, $F'$ and similar variations are finite sets which are potentially subtournaments or transitive 
subtournaments of $T_{e'}^{X \oplus G}$.  For $q = (F_q, I_q, S_q)$, we assume $F_q$ is a finite set, $I_q$ is a pair of distinct elements of $F_q$ and $S_q$ is an 
index for a potential infinite family of subtournaments in $T_{e'}^{X \oplus G}$.
\begin{itemize}

\item $F \Vdash (E$ is a subtournament) if for every $u \in E$, $\Phi_{e'}^{X \oplus F}(u,u) = 0$ and for each distinct 
$u,v \in E$, there is an $i \in \{ 0, 1 \}$ such that $\Phi_{e'}^{X \oplus F}(u,v) = i$ and $\Phi_{e'}^{X \oplus F}(v,u) = 1-i$.

\item $F \Vdash (E$ is a not a subtournament) if either there is a $u \in E$ such that $\Phi_{e'}^{X \oplus F}(u,u) = 1$ or there are
distinct $u,v \in E$ such that $\Phi_{e'}^{X \oplus F}(u,v) =\Phi_{e'}^{X \oplus F}(v,u)$.

\item $F \Vdash (E$ is transitive) if $F \Vdash (E$ is a subtournament) 
and the relation on $E$ given by $\Phi_{e'}^{X \oplus F}$ is transitive.  

\item $F \Vdash (E$ is not transitive) if $F \Vdash (E$ is a subtournament) and the relation on $E$ given by $\Phi_{e'}^{X \oplus F}$ is not transitive.

\item Fix an index $S$ for a potential family of subtournaments of $T_{e'}^{X \oplus G}$. 
\begin{itemize}

\item $F \Vdash (S$ is a potential family at level $m$) if $S^{X \oplus F}(m)$ converges and outputs 
a finite set $\{ E_0, \ldots, E_k \}$ such that $F \Vdash (E_j$ is a subtournament) for each $j$.

\item $F \Vdash (S$ is a family up to level $n$) if for all $m \leq n$, $F \Vdash (S(m)$ is a potential family at 
level $m$), and if $m < n$, then the families of finite sets given by 
$S^{X \oplus F}(m)$ and $S^{X \oplus F}(m+1)$ satisfy Condition (A2) in Definition \ref{def:family}.  

\item $F \Vdash (S$ is not a family) if either there is an $m \leq |F|$ such that 
$S^{X \oplus F}(m) = \{ E_0, \ldots, E_k \}$ and for some $j$, $F \Vdash (E_j$ is not a subtournament), or there is an $m < |F|$ such that 
$F \Vdash (S$ is a family up to level $m$), $F \Vdash (S$ is a potential family at level $m+1$), but 
the families of finite sets given by $S^{X \oplus F}(m)$ and $S^{X \oplus F}(m+1)$ do not meet Condition (A2) in Definition \ref{def:family}.
\end{itemize}

\item $F \Vdash (q \not \in \mathbb{Q}_{e'}^{X \oplus G})$ if either $F \Vdash (F_q$ is not transitive); or $F \Vdash (F_q$ is transitive) but 
$I_q$ is not a minimal interval in $F_q$; or $F \Vdash (S_q$ is not a family); or there is an 
$E \in S_q^{X \oplus F}(0)$ such that $F_q \not < E$; or there is a $E \in S_q^{X \oplus F}(n)$ for some $n \leq |F|$ such that 
$E$ is not contained in $I_q$.

\item $F \Vdash (q$ is a condition to level $n$) if $F \Vdash (F_q$ is transitive; $I_q$ is a minimal interval in $F_q$; 
$F \Vdash (S$ is a family up to level $n$); for all $E \in S_q^{X \oplus F}(0)$, $F_q < E$; and for all $m \leq n$ and all 
$E \in S_q^{X \oplus F}(n)$, $E \subseteq I_q$.  
\end{itemize}

Note that all of these forcing statements are $X$-computable and are closed under extensions.  
If $q \in \mathbb{Q}_{e'}^{X \oplus G}$ for our generic $G$, then for each $n$, there is 
an index $k_n$ such that $F_{k_n} \Vdash (q$ is a condition up to level $n$) where $(F_{k_n},I_{k_n},S_{k_n})$ appears in the sequence defining $G$.  
However, if $q \not \in \mathbb{Q}_e^{X \oplus G}$, then this statement is not necessarily forced because, in order to keep our statements 
$X$-computable, we have not included conditions to handle divergence computations.  

The requirement $\mathcal{T}_{e',\mathcal{K},q}^{X,A^*(c),B^*(c)}$ consists of all finite 
transitive subtournaments $F$ of $T_e^X$ such that either 
\begin{enumerate}
\item[(C1)] $F \Vdash q \not \in \mathbb{Q}_{e'}^{X \oplus G}$; or

\item[(C2)] there is an $n \leq |F|$ such that $F \Vdash (q$ is a condition up to level $n$) and  
for all $E \in S_q^{X \oplus F}(n)$ and all partitions $E = E_0 \cup E_1$, there is an $i \in \{ 0,1 \}$ and a transitive $F' \subseteq E_i$ 
such that $\exists a \in A^*(c) \, \exists b \in B^*(c) \, (R_{\mathcal{K}}^{X \oplus F}(F_q \cup F',a,b))$.  
\end{enumerate}
The function $a^X_{\mathcal{T}_{e',\mathcal{K},q}}(F)$ is defined by $a^X_{\mathcal{T}_{e',\mathcal{K},q}}(F) = a_{\mathcal{K}}^{X \oplus F}(F_q)$.  

\begin{lem}
\label{lem:dense}
Let $G = \cup F_k$ be a generic defined by a sequence of conditions $(F_k,I_k,S_k)$ and let $e'$ be an index such that $T_{e'}^{X \oplus G}$ is an 
infinite tournament.  Each requirement $\mathcal{K}^{X \oplus G,A^*(c),B^*(c)}$ is 
uniformly dense in $\mathbb{Q}_{e'}^{X \oplus G}$.  
\end{lem}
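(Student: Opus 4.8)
The plan is to follow the template of Lemma \ref{lem:uniformdense} from the proof of Theorem \ref{thm:ADS}, but working with the family/tournament forcing and the ``settling'' machinery rather than the ``satisfaction'' machinery. Fix an index $e'$ with $T_{e'}^{X \oplus G}$ an infinite tournament, fix the pair of indices $\mathcal{K}$ specifying $\mathcal{K}^{X \oplus G,A^*(c),B^*(c)}$, fix a condition $q = (F_q,I_q,S_q) \in \mathbb{Q}_{e'}^{X \oplus G}$, and assume $\mathcal{K}^{X \oplus G}$ is essential below $q$. In particular $a_{\mathcal{K}}^{X \oplus G}(F_q)$ converges; write $\hat a$ for its value. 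Since $a^X_{\mathcal{T}_{e',\mathcal{K},q}}(F) = a_{\mathcal{K}}^{X \oplus F}(F_q)$ and the $F_k$ exhaust $G$, there is some $i$ with $a^X_{\mathcal{T}_{e',\mathcal{K},q}}(F_i)\downarrow$, and by the use convention its value is $\hat a$. By Lemma \ref{lem:settle} there is a $k \geq i$ such that $(F_k,I_k,S_k)$ settles $\mathcal{T}_{e',\mathcal{K},q}^{X,A^*(c),B^*(c)}$, and $a^X_{\mathcal{T}_{e',\mathcal{K},q}}(F_k) = \hat a$.

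The key step is to rule out the second clause of Definition \ref{def:settle}. Suppose toward a contradiction that there is an $x$ such that for every $E \in S_k(m)$ lying on an infinite path through $S_k$ and every transitive $F' \subseteq E$ we have $F_k \cup F' \notin \mathcal{T}_{e',\mathcal{K},q}^{X,\hat a,(x,\infty)}$. I would apply essentiality of $\mathcal{K}^{X \oplus G}$ below $q$ with this very $x$, obtaining a finite $B > x$ and a level $n$ such that for every $E^* \in S_q^{X \oplus G}(n)$ and every partition $E^* = E^*_0 \cup E^*_1$ there are $i^* \in \{0,1\}$, a transitive $F^* \subseteq E^*_{i^*}$, and a $b \in B$ with $R_{\mathcal{K}}^{X \oplus G}(F_q \cup F^*,\hat a,b)$. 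Since $q \in \mathbb{Q}_{e'}^{X \oplus G}$ and there are only finitely many such $E^*$ and partitions, I can find an $\ell \geq k$ so that the initial segment $F_\ell$ of $G$ forces ``$q$ is a condition up to level $n$'' (using the observation recorded after the list of forcing definitions), has $|F_\ell| \geq n$, stabilizes $a^X_{\mathcal{T}_{e',\mathcal{K},q}}$, and forces all the finitely many relevant computations $R_{\mathcal{K}}^{X \oplus F_\ell}(F_q \cup F^*,\hat a,b)$ to converge to the correct values (this is legitimate because $F_\ell$ agrees with $G$ on an initial segment long enough to bound all the uses). Then $F_\ell$ satisfies clause (C2) in the definition of $\mathcal{T}_{e',\mathcal{K},q}^{X,\hat a,B}$, so $F_\ell \in \mathcal{T}_{e',\mathcal{K},q}^{X,\hat a,B} \subseteq \mathcal{T}_{e',\mathcal{K},q}^{X,\hat a,(x,\infty)}$. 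Setting $F' = F_\ell \setminus F_k$ we get $F_k \cup F' = F_\ell$ with $F'$ transitive and $F_k < F'$; and since $(F_\ell \setminus F_k) + S_\ell \leq S_k$ with $S_\ell$ infinite, the lemma that every infinite subtournament coded by a family $\leq S_k$ embeds into one coded by $S_k$ puts the finite set $F'$ inside some node $E \in S_k(m)$ lying on an infinite path through $S_k$. This contradicts the choice of $x$, so the second clause of settling fails.

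Hence $(F_k,I_k,S_k)$ settles $\mathcal{T}_{e',\mathcal{K},q}^{X,A^*(c),B^*(c)}$ via the first clause, i.e.\ $F_k \in \mathcal{T}_{e',\mathcal{K},q}^{X,A^*(c),B^*(c)}$. Because $q \in \mathbb{Q}_{e'}^{X \oplus G}$ and $F_k$ is an initial segment of $G$, clause (C1) cannot hold: each of the forcing statements comprising ``$F_k \Vdash q \notin \mathbb{Q}_{e'}^{X \oplus G}$'' would, by soundness, make $q$ fail to be a condition relative to $X \oplus G$. So clause (C2) holds: there is an $n \leq |F_k|$ with $F_k \Vdash (q$ is a condition up to level $n)$ and such that for every $E \in S_q^{X \oplus F_k}(n)$ and every partition $E = E_0 \cup E_1$ there are $i \in \{0,1\}$ and a transitive $F' \subseteq E_i$ with $F_q \cup F' \in \mathcal{K}^{X \oplus F_k, A^*(c), B^*(c)}$. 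Since $F_k$ is an initial segment of $G$, forcing ``$q$ is a condition up to level $n$'' gives $S_q^{X \oplus F_k}(n) = S_q^{X \oplus G}(n)$, and each convergent computation $R_{\mathcal{K}}^{X \oplus F_k}$ persists to $R_{\mathcal{K}}^{X \oplus G}$, so $F_q \cup F' \in \mathcal{K}^{X \oplus G, A^*(c), B^*(c)}$. Thus the level $n$ witnesses that $\mathcal{K}^{X \oplus G, A^*(c), B^*(c)}$ is uniformly dense below $q$ in the sense of Definition \ref{def:EMud}; since $q$ was an arbitrary condition below which $\mathcal{K}^{X \oplus G}$ is essential, this proves the lemma.

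I expect the main obstacle to be the middle paragraph: correctly matching the finite oracle $F_\ell$ to $X \oplus G$ through the use convention, checking that $F_\ell$ genuinely lands in $\mathcal{T}_{e',\mathcal{K},q}^{X,\hat a,B}$ (which needs $F_\ell$ to have frozen the value of $S_q$ at level $n$ as well as the $R_{\mathcal{K}}$-computations), and—the genuinely tournament-specific point—pinning $F_\ell \setminus F_k$ inside a node of $S_k$ that lies on an infinite path, which is exactly what lets essentiality of $\mathcal{K}^{X \oplus G}$ below $q$ contradict the ``settled by the second clause'' scenario. Everything else is a routine transcription of the argument in Lemma \ref{lem:uniformdense} into the family-of-subtournaments setting.
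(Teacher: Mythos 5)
Your proposal is correct and follows essentially the same argument as the paper: appeal to Lemma~\ref{lem:settle} for a $(F_k,I_k,S_k)$ settling $\mathcal{T}_{e',\mathcal{K},q}^{X,A^*(c),B^*(c)}$, use essentiality of $\mathcal{K}^{X\oplus G}$ below $q$ together with the fact that $(F_\ell\setminus F_k)+S_\ell\leq S_k$ to contradict the second settling clause, then unwind (C2) with the oracle $X\oplus F_k$ replaced by $X\oplus G$. The only cosmetic differences are that you rule out the second clause of Definition~\ref{def:settle} directly rather than splitting on whether $F_k\in\mathcal{T}_{e',\mathcal{K},q}^{X,A^*(c),B^*(c)}$, and you pick a specific $F_\ell$ from the generic sequence (invoking the family-containment lemma explicitly) where the paper uses an arbitrary sufficiently long initial segment $H$ of $G$ and the assertion that $G$ lies on an infinite path of $S_k$.
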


\begin{proof}
Fix a requirement $\mathcal{K}^{X \oplus G,A^*(c),B^*(c)}$ and a condition $q = (F_q,I_q,S_q) \in \mathbb{Q}_{e'}^{X \oplus G}$ such that 
$\mathcal{K}^{X \oplus G}$ is essential below $q$.  Being essential below $q$ implies that $a_{\mathcal{K}}^{X \oplus G}(F_q)$ converges, 
and so there is an $i$ such that $a_{\mathcal{K}}^{X \oplus F_i}(F_q) = a_{\mathcal{K}}^{X \oplus G}(F_q)$ converges.  By definition, 
$a_{\mathcal{T}_{e',\mathcal{K},q}}^X(F_i) = a_{\mathcal{K}}^{X \oplus F_i}(F_q)$, so it also converges.  By Lemma \ref{lem:settle}, 
there is a condition $(F_k,I_k,S_k)$ settling $\mathcal{T}_{e',\mathcal{K},q}^{X,A^*(c),B^*(c)}$.  For notational simplicity, let 
$a = a_{\mathcal{T}_{e',\mathcal{K},q}}^X(F_k) = a_{\mathcal{K}}^{X \oplus G}(F_q)$.

By Definition \ref{def:settle}, there are two ways in which $(F_k,I_k,S_k)$ could settle $\mathcal{T}_{e',\mathcal{K},q}^{X,A^*(c),B^*(c)}$.  We consider these 
options separately.  First, we show that if $(F_k,I_k,S_k)$ settles $\mathcal{T}_{e',\mathcal{K},q}^{X,A^*(c),B^*(c)}$ because 
$F_k \in \mathcal{T}_{e',\mathcal{K},q}^{X,A^*(c),B^*(c)}$, then $\mathcal{K}^{X \oplus G,A^*(c),B^*(c)}$ 
satisfies the required condition for uniform density in Definition \ref{def:EMud} with respect to the condition $q$.  Second, we show that $(F_k,I_k,S_k)$ 
cannot settle $\mathcal{T}_{e',\mathcal{K},q}^{X,A^*(c),B^*(c)}$ in the case when $F_k \not \in \mathcal{T}_{e',\mathcal{K},q}^{X,A^*(c),B^*(c)}$, 
completing the proof.  

First, suppose $F_k \in \mathcal{T}_{e',\mathcal{K},q}^{X,A^*(c),B^*(c)}$.  Because $q \in \mathbb{Q}_{e'}^{X \oplus G}$, $F_k$ must be in 
$\mathcal{T}_{e',\mathcal{K},q}^{X,A^*(c),B^*(c)}$ because of Condition (C2).  However, replacing the 
oracle in (C2) by $X \oplus G$ and comparing the result with the definition of uniform density for $\mathcal{K}^{X \oplus G,A^*(c),B^*(c)}$ with respect to the 
condition $q$ shows that we have obtained exactly what we need.  

Second, suppose $(F_k,I_k,S_k)$ settles $\mathcal{T}_{e',\mathcal{K},q}^{X,A^*(c),B^*(c)}$ but 
$F_k \not \in \mathcal{T}_{e',\mathcal{K},q}^{X,A^*(c),B^*(c)}$.  By the definition of settling, 
we fix $x$ such that whenever $E \in S_k^X(n)$ is on an infinite path in $S_k$ and $F' \subseteq E$ is transitive, 
$F_k \cup F' \not \in \mathcal{T}_{e',\mathcal{K},q}^{X,a,(x,\infty)}$.  We derive a contradiction by constructing such an $F'$ (called $H'$ below) 
for which $F_k \cup F' \in \mathcal{T}_{e',\mathcal{K},q}^{X,a,(x,\infty)}$.

Because $\mathcal{K}^{X \oplus G}$ is essential below $q$, 
there is a level $n$ (for this fixed $x$) and a finite set $B > x$ such that for all $E \in S_q^{X \oplus G}(n)$ and all partitions $E = E_0 \cup E_1$, there is 
an $i \in \{ 0,1 \}$ and a transitive $F' \subseteq E_i$ such that $F_q \cup F' \in \mathcal{K}^{X \oplus G,a,B}$, and hence 
$F_q \cup F' \in \mathcal{K}^{X \oplus G,a,(x,\infty)}$.  That is, for each such $F'$
\[
\exists b > x \, ( R_{\mathcal{K}}^{X \oplus G}(F_q \cup F',a,b)).
\]
Let $H$ be a finite initial segment of $G$ which is long enough to force these statements for each of the finitely many $F'$ sets.  We also 
assume that $H$ is long enough that $S_q^{X \oplus H}(m)$ converges for all $m \leq n$ and $F_q$ is a transitive subset of 
$T_{e'}^{X \oplus H}$.

Let $H'  = H \setminus F_k$.  Because $G$ is a subset of some set coded by $S_k$, there is a level $m$ and an $E \subseteq S_k(m)$ which is on 
an infinite path through $S_k$ (namely the one containing $G$) such that $H' \subseteq E$.  Furthermore, because $G$ is transitive in 
$T_e^X$, so is $H'$.  Finally, since $F_k \cup H' = H$, we have for each $F'$
\[
\exists b > x (R_{\mathcal{K}}^{X \oplus (F_k \cup H')}(F_q \cup F',a,b))
\]
which means $F_k \cup H' \in \mathcal{T}_{e',\mathcal{K},q}^{X,a,(x,\infty)}$ giving the desired contradiction.
\end{proof}

We have now completed the description of the iteration forcing except for the proof of Theorem \ref{thm:settle}.  Note that we have met the 
required conditions: $G$ is an infinite transitive subtournament of $T_e^X$ such that $X \oplus G$ does not compute a solution to $c$ (by 
Lemma \ref{lem:nosolve}) and such that for each index $e'$ for which $T_{e'}^{X \oplus G}$ is an infinite tournament, each requirement 
$\mathcal{K}^{X \oplus G,A^*(c),B^*(c)}$ is uniformly dense (by Lemma \ref{lem:dense}).  

We end this subsection by proving Theorem \ref{thm:settle}.  Fix a requirement $\mathcal{K}^{X,A^*(c),B^*(c)}$ which is 
uniformly dense and a condition $(F,I,S)$.  We need to find a condition $(F',I',S') \leq (F,I,S)$ which settles $\mathcal{K}^{X,A^*(c),B^*(c)}$.  
Finding this condition breaks into two cases: when $\mathcal{K}^{X}$ is essential below $(F,I,S)$ and when 
$\mathcal{K}^{X}$ is not essential below $(F,I,S)$.  In each case, we will need a partition lemma for $S$.  

We begin with the case when $\mathcal{K}^{X}$ is essential below $(F,I,S)$.  We state the required partition lemma, 
show that it suffices and then return to the proof of the partition lemma.  For finite subtournaments $E$ and $E'$, we write 
$E \rightarrow E'$ if  $T_e^X(x,y)$ holds for all $x \in E$ and $y \in E'$.  

\begin{lem}
\label{family_split}
  Let $S$ be an infinite family of subtournaments.  Fix a level $n$ and an $E \in S(n)$ on an infinite path through $S$.  
  There is a partition $E=E_0\cup E_1$ and an infinite family $S'$ such that $E_0+S' \leq S$, $E_1+S' \leq S$, 
  and for all $m$ and all $E' \in S'(m)$, $E_0 \rightarrow E'$ and $E' \rightarrow E_1$.
\end{lem}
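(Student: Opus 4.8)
The plan is a finitely iterated pigeonhole argument along an infinite path through $S$, followed by a careful extraction of an $X$-computable family $S'$ that records only ``good'' points.

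Since $E$ lies on an infinite path through $S$, fix such a path $E = E^{(n)} \subsetneq E^{(n+1)} \subsetneq \cdots$ with $E^{(k)} \in S(k)$, and set $U = \bigcup_{k \geq n} E^{(k)}$, an infinite subtournament of $T_e^X$. Repeated use of Condition (A2) shows that the points added at each step of the path all exceed $\max(E)$, so $U$ has the layered form $E = L_0 < L_1 < L_2 < \cdots$ with $L_j = E^{(n+j)} \setminus E^{(n+j-1)}$. Now process the finitely many $x \in E$ one at a time: for each $x$, at least one of $\{y \in U : T_e^X(x,y)\}$ and $\{y \in U : T_e^X(y,x)\}$ is infinite; put $x$ into $E_0$ in the first case and into $E_1$ in the second (breaking ties arbitrarily), and pass to the chosen infinite side. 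After all of $E$ is processed we have an infinite set $W \subseteq U \setminus E$ such that $E_0 \rightarrow \{w\}$ and $\{w\} \rightarrow E_1$ for every $w \in W$. Call a finite subtournament $D$ \emph{good} if $E_0 \rightarrow D$ and $D \rightarrow E_1$; thus every singleton from $W$ is good, and each $w \in W$ exceeds $\max(E)$, so in particular $\max(E_0) < \min(W)$ and $\max(E_1) < \min(W)$.

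The remaining work is to produce an \emph{$X$-computable} family $S'$; neither the path $E^{(k)}$ nor $W$ is in general $X$-computable, so one cannot simply thin the path. The key observation is that the partition $E = E_0 \cup E_1$ is a finite object which we may hard-code; then $W^* := \{ y > \max(E) : T_e^X(x,y) \text{ for all } x \in E_0 \text{ and } T_e^X(y,z) \text{ for all } z \in E_1\}$ is computable from $X$, and $W \subseteq W^*$, so $W^*$ is infinite and every finite subset of $W^*$ is good. Let $S_E$ be the subfamily of $S$ consisting of the nodes containing $E$, i.e.\ $S_E(m) = \{\hat E \in S(n+m) : E \subseteq \hat E\}$. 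Because no element of $E$ is ever added above level $n$ (the layered structure again, together with $\max(S(n+k)) \geq \max(E)$, which holds since $E^{(n+k)} \supseteq E$ witnesses it), the $S$-predecessor of any node containing $E$ again contains $E$; hence $S_E$ is a genuine infinite $X$-computable family. We build $S'$ by an $X$-computable search along $S_E$ that successively locates nodes introducing fresh elements of $W^*$: set $S'(0) = \{\emptyset\}$, and having found a node $\hat E_{m-1} \in S_E$ containing good elements $d_0 < \cdots < d_{m-1}$, search for a node $\hat E_m \in S_E$ extending $\hat E_{m-1}$ and an element $d_m \in W^*$ with $d_m > d_{m-1}$ and $\{d_0,\dots,d_m\} \subseteq \hat E_m$, and put $S'(m) = \{\{d_0,\dots,d_{m-1}\}\}$. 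Each member of $S'$ is a finite subtournament; Condition (A2) holds since $\{d_0,\dots,d_m\}$ extends $\{d_0,\dots,d_{m-1}\}$ by the single point $d_m > d_{m-1} = \max(S'(m))$; and $S'$ is infinite.

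Finally the conclusions follow: every member $\{d_0,\dots,d_{m-1}\}$ of $S'(m)$ lies in $W^*$, hence is good, so $E_0 \rightarrow E'$ and $E' \rightarrow E_1$ for every $E' \in S'(m)$; and $\{d_0,\dots,d_{m-1}\} \subseteq \hat E_{m-1} \in S_E$, which contains $E \supseteq E_0 \cup E_1$, so both $E_0 \cup \{d_0,\dots,d_{m-1}\}$ and $E_1 \cup \{d_0,\dots,d_{m-1}\}$ are contained in a node of $S$, giving $E_0 + S' \leq S$ and $E_1 + S' \leq S$ (the side conditions $E_i < E'$ hold because $d_0 > \max(E) \geq \max(E_i)$). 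The step I expect to be the main obstacle is showing that the search defining $S'$ provably terminates at every stage, i.e.\ that one can keep the current finite datum extendible within the ``good part'' of $S_E$; the existence of the infinite good set $W$ is what guarantees this, but routing it through an $X$-computable search rather than appealing directly to the non-computable path $E^{(k)}$ requires care, and may force a somewhat fatter choice of $S'$ than the single-branch one sketched above. By contrast, the pigeonhole extraction of $E_0,E_1,W$, the verification that $S_E$ is a family, and the checks that $E_0 + S' \leq S$ and $E_1 + S' \leq S$ are all routine.
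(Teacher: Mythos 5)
Your proof has a genuine gap, which you yourself flag at the end: the $X$-computable search that is supposed to build $S'$ is not guaranteed to terminate at every stage, and the fix is not merely a matter of "a somewhat fatter choice of $S'$" — it is the crux of the lemma.

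Concretely, the issue is this. Once you have fixed the partition $E = E_0 \cup E_1$ by the (non-computable) pigeonhole along the chosen path, you define the $X$-computable set $W^* = \{y > \max(E) : E_0 \rightarrow \{y\} \rightarrow E_1\}$. You know $W^*$ is infinite because $W \subseteq W^*$, but $W$ lives along one specific infinite path, and your search along $S_E$ has no way to stay on that path. If at stage $0$ the search finds a node $\hat{E}_0 \in S_E$ that happens to contain an element $d_0$ of $W^*$, but $\hat{E}_0$ is a dead end (or no descendant of $\hat{E}_0$ ever picks up another element of $W^*$), the search gets stuck. Dropping the requirement that $\hat{E}_m$ extend $\hat{E}_{m-1}$ does not help: there still need not exist any node of $S_E$ containing both $d_0$ and a later element of $W^*$. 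The deeper problem is that by committing to the particular partition $(E_0,E_1)$ coming from the path, you have committed to a particular fiber of the map $x \mapsto (\{a \in E : T_e^X(a,x)\}, \{b \in E : T_e^X(x,b)\})$, and there is no guarantee that an $X$-computable infinite family lives inside that fiber (the infinite witness $W$ is not $X$-computable, and $X$ cannot identify the right branch of $S_E$). This is exactly the difficulty that Lemma \ref{family_partition} is designed to solve.

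The paper's proof sidesteps the problem by \emph{not} choosing the partition in advance. It defines a pointwise partition $g$ of $S \upharpoonright E$ into the $|P_E|$-many pieces indexed by ordered partitions of $E$ (each new element $x$ is labeled by the partition it induces on $E$), and then invokes the extended Lemma \ref{family_partition} to extract an infinite $X$-computable family $S' \leq S_{\langle E_0, E_1 \rangle}$ for \emph{some} piece $\langle E_0, E_1 \rangle$ — the combinatorics of $S$ picks the piece, not the user. Since any piece yields a valid $(E_0, E_1)$ for the conclusion, this is fine. The real content is in the proof of Lemma \ref{family_partition}: the case split on whether some node on one side stabilizes forever (in which case $S'$ is taken on the other side above the stable node) versus the case where every node eventually disappears (in which case $S'$ is built by jumping levels), and the verification that both constructions yield $X$-computable families. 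That case analysis is precisely what your greedy search is missing, and it cannot be avoided by first fixing $(E_0,E_1)$: the two-way "good/bad" partition that your fixed choice induces is exactly the kind of partition for which Lemma \ref{family_partition} might return an infinite family on the \emph{bad} side.
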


\begin{lem}
Let $\mathcal{K}^X$ be essential below $(F,I,S)$ and let $\mathcal{K}^{X,A^*(c),B^*(c)}$ be uniformly dense.  There is an $(\tilde{F},\tilde{I},\tilde{S})\leq (F,I,S)$ settling 
$\mathcal{K}^{X,A^*(c),B^*(c)}$.
\end{lem}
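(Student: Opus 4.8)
The plan is to produce an extension of $(F,I,S)$ on which $\mathcal{K}^{X,A^*(c),B^*(c)}$ is satisfied outright, so that settling holds via the first alternative of Definition~\ref{def:settle}. The two ingredients are uniform density and Lemma~\ref{family_split}. Since $\mathcal{K}^X$ is essential below $(F,I,S)$, the value $a_{\mathcal{K}}^X(F)$ converges, and uniform density (Definition~\ref{def:EMud}) furnishes a level $n$ with the property that for every $E\in S(n)$ and every partition $E=E_0\cup E_1$ there are $i\in\{0,1\}$ and a transitive $F'\subseteq E_i$ with $F\cup F'\in\mathcal{K}^{X,A^*(c),B^*(c)}$. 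First I would fix such an $n$, and --- using that $S$ viewed as a tree is infinite and finitely branching --- choose a node $E\in S(n)$ lying on an infinite path through $S$. Applying Lemma~\ref{family_split} to $S$, $n$, $E$ yields a partition $E=E_0\cup E_1$ and an infinite family $S'$ with $E_0+S'\leq S$, $E_1+S'\leq S$, and $E_0\rightarrow E''$, $E''\rightarrow E_1$ for every $E''$ appearing in $S'$. Feeding this particular partition into uniform density produces $i\in\{0,1\}$ and a transitive $F'\subseteq E_i$ with $F\cup F'\in\mathcal{K}^{X,A^*(c),B^*(c)}$; if $F'=\emptyset$ then $F$ itself lies in $\mathcal{K}^{X,A^*(c),B^*(c)}$ and $(F,I,S)$ already settles the requirement, so I may assume $F'\neq\emptyset$.

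Now set $\tilde F=F\cup F'$ and $\tilde S=S'$, and I must manufacture a minimal interval $\tilde I$ making $(\tilde F,\tilde I,\tilde S)$ a condition below $(F,I,S)$. Membership in $\mathcal{K}^{X,A^*(c),B^*(c)}\subseteq\mathbb{F}_e^X$ gives that $\tilde F$ is transitive; since $F<E$ for every subtournament $E$ occurring in $S$ and $F'\subseteq E_i\subseteq E\subseteq I$, we get $F\cap F'=\emptyset$, so $\tilde F$ extends $F$ with $\tilde F\setminus F=F'$. From $F'\subseteq E_i$ and $E_i+S'\leq S$ it follows at once that $F'+S'\leq S$ (a set $\hat E\in S(m)$ containing $E_i\cup E''$ also contains $F'\cup E''$), while $F'<E''$ for $E''\in S'(0)$ since $F'\subseteq E_i<E''$; hence $\tilde S$ is an infinite family with $\tilde F<E''$ for all $E''\in\tilde S(0)$ and $(\tilde F\setminus F)+\tilde S\leq S$. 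For $\tilde I$: list $F'$ in the order $T_e^X$ induces on the transitive set $\tilde F$ as $f'_1,\dots,f'_k$; as $F'\subseteq I$ and $I=(a,b)$ is a minimal interval of $F$, these are exactly the elements of $\tilde F$ strictly between $a$ and $b$, so $(a,f'_1),(f'_1,f'_2),\dots,(f'_k,b)$ are the minimal intervals of $\tilde F$ lying inside $I$. Every $E''$ appearing in $S'$ is contained in $I$ (because $S'\leq S$ and every member of $S$ lies in $I$). If $i=0$ then $F'\subseteq E_0$ gives $T_e^X(f'_k,y)$ for all $y\in E''$, and $E''\subseteq(a,b)$ gives $T_e^X(y,b)$, so $E''\subseteq(f'_k,b)$; I set $\tilde I=(f'_k,b)$. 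If $i=1$ then $E''\rightarrow E_1\supseteq F'$ gives $T_e^X(y,f'_1)$ for $y\in E''$, together with $T_e^X(a,y)$, so $E''\subseteq(a,f'_1)$; I set $\tilde I=(a,f'_1)$. Either way $\tilde I$ is a minimal interval of $\tilde F$ sitting within $I$ and containing every subtournament of $\tilde S$, so $(\tilde F,\tilde I,\tilde S)\in\mathbb{Q}_e^X$ and $(\tilde F,\tilde I,\tilde S)\leq(F,I,S)$.

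Finally, $a_{\mathcal{K}}^X(\tilde F)$ converges because $a_{\mathcal{K}}^X(F)$ does and $\tilde F$ extends $F$, so together with $\tilde F\in\mathcal{K}^{X,A^*(c),B^*(c)}$ the condition $(\tilde F,\tilde I,\tilde S)$ settles $\mathcal{K}^{X,A^*(c),B^*(c)}$. I expect the only genuinely delicate point --- everything else being definitional bookkeeping --- to be the choice of $\tilde I$: one has to observe that the side $E_i$ on which uniform density wins is exactly the side relative to which Lemma~\ref{family_split} places $S'$, so that the residual family $S'$ still lies entirely within a single minimal interval of the enlarged transitive set $\tilde F$. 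This is precisely what the conditions $E_0\rightarrow E''$ and $E''\rightarrow E_1$ in Lemma~\ref{family_split} are designed to secure, and it is the reason the partition $E_0\cup E_1$ coming from that lemma (rather than an arbitrary one) must be the partition supplied to the uniform-density hypothesis.
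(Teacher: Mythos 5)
Your proof is correct and follows essentially the same route as the paper's: fix a level $n$ from uniform density, pick $E\in S(n)$ on an infinite path, apply Lemma~\ref{family_split} to get the split $E=E_0\cup E_1$ and family $S'$, feed that specific partition back into uniform density to extract $F'\subseteq E_i$, then set $\tilde F=F\cup F'$, $\tilde S=S'$, and choose $\tilde I$ on the correct side of $F'$. The only cosmetic difference is that the paper dispatches the case $F\in\mathcal{K}^{X,A^*(c),B^*(c)}$ at the outset, whereas you handle the equivalent degenerate case $F'=\emptyset$ mid-argument.
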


\begin{proof}
If $F \in \mathcal{K}^{X,A^*(c),B^*(c)}$, then there is nothing to show, so assume $F \not \in \mathcal{K}^{X,A^*(c),B^*(c)}$.  
By uniform density, we can fix a level $n$ such that whenever $E \in S(n)$ and 
$E = E_0\cup E_1$ is a partition, there is an $i\in\{0,1\}$ and a transitive $F' \subseteq E_i$ such that $F\cup F'\in \mathcal{K}^{X,A^*(c),B^*(c)}$.  
Fix $E \in S(n)$ such that $E$ is on an infinite path through $S$. Let $E = E_0 \cup E_1$ and $S'$ be the partition and infinite family guaranteed by Lemma 
\ref{family_split}.  Fix an $i \in \{ 0,1 \}$ and a transitive $F' \subseteq E_i$ such that $F\cup F'\in \mathcal{K}^{X,A^*(c),B^*(c)}$.   Since 
$F \not \in \mathcal{K}^{X,A^*(c),B^*(c)}$, we have $F' \neq \emptyset$.

Let $\tilde{F} = F \cup F'$ and $\tilde{S} = S'$.  To define $\tilde{I}$, let $c$ and $d$ be such that $I$ is the $F$-minimal interval $(c,d)$.  
Since $F'$ is transitive and hence is a finite linear order contained in $I$, let $a$ denote the minimal element of $F'$ and $b$ denote the maximal element 
of $F'$.  (If $|F'|=1$, then $a=b$.)  If $i=0$, then for all $m$ and all 
$E' \in S'(m)$, we have $F' \rightarrow E'$ and hence $E'$ is contained in the minimal interval $(b,d)$ of $\tilde{F}$.  In this case, set 
$\tilde{I} = (b,d)$.  If $i = 1$, then for all $m$ and all $E' \in S'(m)$, $E' \rightarrow F'$ and hence $E'$ is contained in the minimal interval 
$(c,a)$ of $\tilde{F}$.  In this case, set $\tilde{I} = (c,a)$.

It is clear that $(\tilde{F},\tilde{I},\tilde{S})$ is a condition and it settles $\mathcal{K}^{X,A,^*(c),B^*(c)}$ because $\tilde{F} = 
F\cup F'\in \mathcal{K}^{X,A^*(c),B^*(c)}$.  To see that $(\tilde{F},\tilde{I},\tilde{S})\leq (F,I,S)$, note that 
$\tilde{F} \setminus F = F' \subseteq E \in S(n)$, and 
$(\tilde{F} \setminus F) + S' = F'+S' \leq S$.
\end{proof} 

We next turn to the proof of Lemma \ref{family_split}.  For a family of subtournaments $S$, let $\bigcup S$ be the set of all 
$x$ such that $x \in E$ for some $E \in S(n)$.  

\begin{defn}
\label{defn:partition}
Let $S$ be a family of subtournaments.  A \emph{pointwise partition of} $S$ is a function 
$g: \bigcup S \rightarrow \{ 0,1 \}$.  
Given a partition $g$ of $S$, we say $g$ \emph{generates the functions} $S_0$ and $S_1$ defined by 
\begin{gather*}
S_0(n) = \{ E \mid \exists E' \in S(n) \, (x \in E \leftrightarrow (x \in E' \wedge g(x) = 0)) \} \\
S_1(n) = \{ E \mid \exists E' \in S(n) \, (x \in E \leftrightarrow (x \in E' \wedge g(x) = 1)) \}
\end{gather*}
where in both sets, $E$ ranges over the finite subtournaments of $T_e^X$.
\end{defn}

The functions $S_0$ and $S_1$ need not be families of potential subtournaments.  Condition (A2) in Definition \ref{def:family} 
can fail because a subtournament $E$ appears in both $S_0(n)$ and $S_0(n+1)$.  In particular, if $S$ is infinite, then $S_0$ and $S_1$ are 
also infinite in the sense that for $i \in \{ 0,1 \}$ and for all levels $n$, $S_i(n) \neq \emptyset$.  This fact follows because each  
$E' \in S(n)$ must be partitioned into an $S_0$-half and an $S_1$-half.  Note that one of these halves could be $\emptyset$, i.e.~we 
can have $\emptyset \in S_i(n)$.  

$S_0$ and $S_1$ do satisfy a condition similar to (A2):  
For every $E' \in S_i(n+1)$, either $E' \in S_i(n)$ or there is an $E'' \in S_i(n)$ such that $E'' \subsetneq E'$ 
and for all $x \in E' \setminus E''$, $x > \text{max}(S(n))$.  Therefore, $S_0$ and $S_1$ are finitely branching trees and 
we can treat $S_0$ and $S_1$ like infinite families with the 
exception that an infinite path in $S_i$ may code a finite set.  In particular, we write $S' \leq S_i$, for a family $S'$, to denote that 
for every level $n$ in $S'$, there is a level $m \geq n$ in $S_i$ such that for all $E' \in S'(n)$, there is an $E \in S_i(m)$ such that 
$E' \subseteq E$.  Note that if $S' \leq S_i$, then $S' \leq S$.  
 
In the next lemma, we show that we can always refine at least one of $S_0$ or $S_1$ to obtain a family of subtournaments contained in $S$.

\begin{lem}
\label{family_partition}
  Let $S$ be an infinite family of subtournaments and let $S_0$ and $S_1$ be generated by a pointwise partition $g$ of $S$. Then there is an infinite 
  family $S'\leq S_i$ for some $i\in\{0,1\}$.
\end{lem}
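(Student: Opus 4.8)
The plan is to build the refining family $S'$ by a König's-lemma style argument on a carefully chosen tree, after first figuring out which half ($S_0$ or $S_1$) can be refined. The key difficulty is that the functions $S_i$ generated by a pointwise partition need not satisfy Condition (A2): a subtournament $E$ can reappear at consecutive levels $S_i(n)$ and $S_i(n+1)$, so the ``tree'' $S_i$ has stalled branches and the naive subtree of $S$ sitting above a branch of $S_i$ may be finite even though $S_i$ itself is infinite. So I would not try to refine both halves simultaneously; instead I would look for a single branch of $S$ along which one fixed half grows unboundedly.

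First I would set up the book-keeping. View $S$ as a finitely branching tree with nodes $E' \in S(n)$ at level $n$, and for each node $E'$ let $g$ split it as $E' = E'_0 \cup E'_1$ where $E'_i = \{x \in E' \mid g(x) = i\}$. Call a node $E' \in S(n)$ \emph{$i$-live} if there are infinitely many levels $m > n$ and nodes $E'' \in S(m)$ extending $E'$ in $S$ with $|E''_i| > |E'_i|$ --- that is, the $i$-half keeps growing above $E'$ along some branch. The root is $i$-live for at least one $i \in \{0,1\}$: since $S$ is infinite and finitely branching it has an infinite path, each $E'$ on it partitions into halves, and since $|E'_0| + |E'_1| \to \infty$ along the path, at least one of $|E'_0|, |E'_1|$ is unbounded, so that $i$ works. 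Fix such an $i$.

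Next I would prune $S$ to the subtree of $i$-live nodes. This subtree is infinite (the root is in it) and finitely branching; moreover every $i$-live node has an $i$-live successor --- if $E'$ is $i$-live then among its finitely many successors in $S$, one must lie below infinitely many of the witnessing extensions, and that successor is again $i$-live (this is the routine pigeonhole step). I would then read off the desired family $S'$: define $S'(n)$ by walking up the $i$-live subtree and, at each level where the $i$-half strictly grows, recording the $i$-halves $E'_i$ of all $i$-live nodes $E'$ at that level of $S$; formally, $S'(n)$ consists of those $E'_i$ for $E'$ ranging over the $i$-live nodes $E'$ at the $n$-th ``growth level'' of the pruned tree. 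Because every $i$-live node has an $i$-live successor along which the $i$-half eventually grows, each $S'(n)$ is nonempty, so $S'$ is infinite; the nesting condition (A2) holds for $S'$ because successive recorded $i$-halves are obtained by adding only elements $x$ with $g(x) = i$ coming from later levels of $S$, hence lying above $\operatorname{max}(S(\text{earlier level}))$; and $S' \leq S_i$ (hence $S' \leq S$) because each $E' \in S'(n)$ is by construction the $i$-half of some node of $S$, i.e.\ an element of some $S_i(m)$.

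The step I expect to be the main obstacle is the verification that one can choose the $i$-live subtree so that \emph{every} node on it genuinely contributes growth to the recorded family --- i.e.\ pinning down the ``growth levels'' so that $S'(n)$ is nonempty for all $n$ while still guaranteeing the inclusions $E' \subseteq E$ needed for $S' \leq S_i$ at a single matching level $m \geq n$. Handling a node $E'$ that is $i$-live only because growth happens far above it (with long stretches of stalling in between) requires defining the level function $m(n)$ for the $\leq$ relation with some care, rather than taking $m(n) = n$. Once the correct indexing of growth levels is fixed, the remaining checks --- finite branching, (A2), and $S' \leq S_i$ --- are routine.
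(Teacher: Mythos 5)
Your approach correctly identifies the central difficulty --- the generated $S_0$, $S_1$ can stall at consecutive levels, so (A2) can fail --- but the proposed fix has two real gaps. First, Definition~\ref{def:family} requires a family to be an $X$-computable function, which is the whole point of the lemma since $S'$ must serve as the third coordinate of a condition in $\mathbb{Q}_e^X$. Being $i$-live involves the existence of an infinite branch with unbounded $i$-half, a non-$X$-computable condition, so the $i$-live subtree, and the family you read off from it, are not $X$-computable. Second, even setting computability aside, fixing $i$ from a single path does not control the rest of the $i$-live subtree, and the deferred ``growth levels'' need not exist. Take $S$ to be a complete binary tree in which $E_{\sigma 0}$ adds to $E_\sigma$ one fresh element with $g$-value $0$ and $E_{\sigma 1}$ adds one fresh element with $g$-value $1$. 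Every node is both $0$-live and $1$-live, so pruning removes nothing; and for either choice of $i$, the branch that always takes the other direction has a permanently constant $i$-half, so no level can witness strict growth of the $i$-half across all $i$-live nodes.

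The paper's dichotomy is different and avoids both problems: either some $E_0 \in S_0(n)$ persists, that is $E_0 \in S_0(m)$ for all $m \geq n$ (and since fresh elements at level $m+1$ exceed $\max(S(m))$, one checks that once an element of $S_0$ is gone it never returns, so this really is a dichotomy), in which case the $1$-halves of those nodes of $S$ whose $0$-half equals $E_0$ already form an $X$-computable family $\leq S_1$; or no element of $S_0$ persists, in which case finiteness of the levels lets one $X$-computably thin $S_0$ to levels where every element has properly grown, giving an $X$-computable family $\leq S_0$. In the binary-tree example above $\emptyset$ persists in $S_0$, and the first case extracts the all-$1$'s branch as the family; a path-based choice of $i$ cannot detect this asymmetry, which is exactly what breaks your construction.
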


\begin{proof}
The proof splits into two cases.  For the first case, suppose that there is an $n$ and an $E_0 \in S_0(n)$ such that $E_0 \in S_0(m)$ for all $m \geq n$.  Then, 
for every $m \geq n$, there is an $E_1^m \in S_1(m)$ such that $E_0 \cup E_1^m \in S(m)$.  In particular, we have $g(x) = 0$ for all $x \in E_0$ and 
$g(x) = 1$ for all $x \in E_1^m$.  Define $S'(k)$ by 
\[
S'(k) = \{ E_1 \in S_1(n + k) \mid E_0 \cup E_1 \in S(n+k) \}.
\]
We show that $S'$ is an infinite family of potential subtournaments from which it immediately follows that $S' \leq S_1$.

Condition (A1) in Definition \ref{def:family} for $S'$ is clear, so consider (A2).  Fix $E_1 \in S'(\ell+1)$.  We need to find $E_1' \in S'(\ell)$ such that 
$E_1' \subsetneq E_1$ and for all $x \in E_1 \setminus E_1'$, $x > \text{max}(S'(\ell))$.  

Since $E_1 \in S'(\ell+1)$, we have $E_0 \cup E_1 \in S(n+\ell+1)$ and we can fix $E \in S(n +\ell)$ such that $E \subsetneq E_0 \cup E_1$ and for all 
$x \in (E_0 \cup E_1) \setminus E$, $x > \text{max}(S(n+\ell))$.  Since $E_0 \in S_0(n+\ell)$, and hence is contained in some element of $S(n+\ell)$, every 
element $x \in E_0$ satisfies $x \leq \text{max}(S(n+\ell))$.  Therefore, $E_0 \subseteq E$.  Set $E_1' = E \setminus E_0$ and we check that $E_1'$ has the 
desired properties.

To see that $E_1' \subseteq E_1$, notice that $E_1' \subseteq E \subseteq E_0 \cup E_1$ and $E_1' \cap E_0 = \emptyset$.  Therefore, $E_1' \subseteq E_1$.  
Furthermore, since $E_0 \subseteq E \subsetneq E_0 \cup E_1$, it follows that $E_1' \subsetneq E_1$.  If $x \in E_1 \setminus E_1'$, then 
$x \in (E_0 \cup E_1) \setminus E$ and hence $x > \text{max}(S(n+\ell)) \geq \text{max}(S'(\ell))$.  
To see that $E_1' \in S'(\ell)$, note that $E_1' \subseteq E_1$ implies that $g(x) = 1$ for all $x \in E_1'$.  Since $E \in S(n+\ell)$ and $E = E_0 \cup E_1'$, it follows 
that $E_1' \in S_1(n+\ell)$ and $E_1' \in S'(\ell)$.   

To show that $S'$ is infinite, it suffices to show that for every $k$, there is an $E \in S(n+k)$ such that $E_0 \subseteq E$ and 
$E \setminus E_0 \in S_1(n+k)$.  If this property failed for some $k$, then for all $E_0' \in S_0(n+k)$ with $E_0 \subseteq E_0'$, we would have 
$E_0 \subsetneq E_0'$ and hence $E_0 \not \in S_0(n+k)$, contrary to our case assumption.    

Now, assume that the first case fails.  Then, for each level $n$ and each $E_0 \in S_0(n)$, there is a level $m > n$ such that 
$E_0 \not \in S_0(m)$.  (This can happen either because $E_0$ has been properly extended or because the corresponding branch in $S$ has been 
eliminated.)  Because the levels of $S_0$ are finite, it follows that for each level $n$, there is a level $m$ such that for all $E_0 \in S_0(n)$, 
$E_0 \not \in S_0(m)$.  (Recall that $S_0$ is infinite because $S$ is infinite.)  We 
define $S' \leq S_0$ inductively.  Let $S'(0) = S_0(0)$.  Given $S'(n) = S_0(n_0)$, let $S'(n+1) = S_0(m)$ for the least $m > n_0$ such that 
for every $E_0 \in S_0(n_0)$, $E_0 \not \in S_0(m)$.  The fact that $S'$ is an infinite family of subtournaments follows almost immediately 
from this definition.
\end{proof}

Definition \ref{defn:partition} and Lemma \ref{family_partition} can be extended to pointwise partitions of $S$ into any fixed finite number of pieces.  

\begin{defn}
Let $S$ be a family of subtournaments and let $E \in S(n)$.   $S\upharpoonright E$ is the family of 
subtournaments defined by 
\[
(S\upharpoonright E)(m) = \{ E' \mid E' \cap E = \emptyset \wedge E' \cup E \in S(n+m+1) \wedge \forall x \in E' (x > \text{max}(S(n))) \}.
\]
As a tree, $S\upharpoonright E$ is formed by taking the subtree of $S$ above $E$ and removing the set $E$ from each node.  
\end{defn}

If $E \in S(n)$ is on an infinite path through $S$, then $S \upharpoonright E$ is an infinite family of subtournaments.  Note that 
$E < E'$ for all $E' \in (S\upharpoonright E)(0)$ and that $E + (S\upharpoonright E)$ corresponds to the subtree of $S$ above $E$.  
We can now give the proof of Lemma \ref{family_split}.

\begin{proof}
Fix an infinite family $S$ and a node $E \in S(n)$ which is on an infinite path through $S$.  
Let $P_E$ be the (finite) set of (ordered) partitions of $E$ defined by 
\[
P_E = \{ \langle E_0, E_1 \rangle \mid E_0 \cup E_1 = E \text{ and } E_0 \cap E_1 = \emptyset \}.
\]
We define a pointwise partition $g$ of the infinite family $S \upharpoonright E$.  Let $g$ be the function from $\bigcup (S \upharpoonright E)$ into 
$P_E$ defined by $g(x) = \langle E_0, E_1 \rangle$ where 
\[
E_0 = \{ a \in E \mid T_e^X(a,x) \text{ holds} \} \text{ and } E_1 = \{ b \in E \mid T_e^X(x,b) \text{ holds} \}.
\]
By the extended version of Lemma \ref{family_partition}, there is an infinite family $S'$ such that $S' \leq S_{\langle E_0,E_1 \rangle}$ 
for a fixed partition $\langle E_0,E_1 \rangle$.  Since $E < E'$ for all $E' \in S'(0)$, we have that $E_0 + S' \leq S$ and $E_1+S' \leq S$ are infinite families. 

Fix $m$ and $E' \in S'(m)$.  Since $S' \leq S_{\langle E_0,E_1 \rangle}$, there is an $k \geq m$ and an 
$\tilde{E} \in S_{\langle E_0, E_1 \rangle}(k)$ such that $E' \subseteq \tilde{E}$.  It follows that for all $x \in E'$, $a \in E_0$ and $b \in E_1$, we have 
$T_e^X(a,x)$ and $T_e^X(x,b)$.  Therefore, $E_0 \rightarrow E'$ and $E' \rightarrow E_1$ as required.  
\end{proof}

Lastly, we turn to the remaining case in the proof of Theorem \ref{thm:settle}, when $\mathcal{K}^{X}$ is not essential below 
$(F,I,S)$ and $a_{\mathcal{K}}^X(F)$ converges.  

\begin{lem}
If $\mathcal{K}^{X}$ is not essential below $(F,I,S)$ and $a_{\mathcal{K}}^X(F)$ converges, 
then there is an infinite family $S'' \leq S$ such that $(F,I,S'') \leq (F,I,S)$ settles $\mathcal{K}^{X,A^*(c),B^*(c)}$.
\end{lem}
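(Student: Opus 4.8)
The plan is to refine $S$ to an infinite subfamily $S''$ all of whose nodes are ``$x$-safe'' for an appropriately chosen $x$, so that $(F,I,S'')$ settles $\mathcal{K}^{X,A^*(c),B^*(c)}$ via the second clause of Definition \ref{def:settle}. If $F\in\mathcal{K}^{X,A^*(c),B^*(c)}$ I take $S''=S$ and am done, so assume not. Since $a^X_{\mathcal{K}}(F)$ converges but $\mathcal{K}^X$ is not essential below $(F,I,S)$, I first fix a witness $x$: for every finite $B>x$ and every level $n$ there is an $E\in S(n)$ admitting a partition $E=P\cup Q$ such that for every transitive $F'\subseteq P$ and every transitive $F'\subseteq Q$ we have $F\cup F'\notin\mathcal{K}^{X,a,B}$ (write $a=a^X_{\mathcal{K}}(F)$). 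I call such a partition $B$-safe, and a set $C$ $B$-safe if no transitive $F'\subseteq C$ has $F\cup F'\in\mathcal{K}^{X,a,B}$, abbreviating $(x,\infty)$-safety to $x$-safety. Taking $n=0$, $B=\{b\}$, $F'=\emptyset$ in the non-essentiality statement gives $\neg R^X_{\mathcal{K}}(F,a,b)$ for every $b>x$, so $F\notin\mathcal{K}^{X,a,(x,\infty)}$; this is exactly the ``$F'=\emptyset$'' instance the settling condition requires.

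Next I build the refinement by a compactness (König's lemma) argument, whose point is to glue the many ``local'' safe bipartitions handed to us by non-essentiality into a single ``global'' one. Let $\mathcal{Z}$ be the tree whose nodes at level $\ell$ are pairs $(E,P)$ with $E\in S(\ell)$ and $P\subseteq E$ such that, writing $E_0\subsetneq\cdots\subsetneq E_\ell=E$ for the $S$-ancestor chain of $E$ and $P_j=P\cap E_j$, $Q_j=E_j\setminus P_j$, each $P_j$ and each $Q_j$ is $(x,j)$-safe; declare $(E',P')$ a child of $(E,P)$ exactly when $E$ is the $S$-parent of $E'$ and $P=P'\cap E$ (so the $P_j$ and $Q_j$ grow coherently). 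Since $S(\ell)$ is finite and $E$ has finitely many subsets, $\mathcal{Z}$ is finitely branching, and it has a node at every level: given $\ell$, apply non-essentiality at level $n=\ell$ with $B=(x,\ell)$ to obtain $E\in S(\ell)$ with an $(x,\ell)$-safe partition $E=P\cup Q$, and restricting this partition along the ancestor chain of $E$ produces a $\mathcal{Z}$-node at level $\ell$, because a subset of an $(x,\ell)$-safe set is $(x,\ell)$-safe and hence $(x,j)$-safe for $j\le\ell$. By König's lemma $\mathcal{Z}$ has an infinite path $(E_0,P_0),(E_1,P_1),\dots$; set $P=\bigcup_\ell P_\ell$ and $Q=\bigcup_\ell Q_\ell$.

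Now $P$ and $Q$ are disjoint (coherence) with $P\cup Q=\bigcup_\ell E_\ell$, which is infinite since $\{E_\ell\}$ is a path through $S$, so at least one of them — call it $G$ — is infinite. Any finite transitive $F'\subseteq G$ lies in $P_m$ (resp. $Q_m$) for all large $m$, which is $(x,m)$-safe, so $F\cup F'\notin\mathcal{K}^{X,a,(x,m)}$ for all large $m$, hence $F\cup F'\notin\mathcal{K}^{X,a,(x,\infty)}=\bigcup_m\mathcal{K}^{X,a,(x,m)}$; thus $G$ is $x$-safe. Finally I choose levels $\ell_0<\ell_1<\cdots$ with $G\cap E_{\ell_k}$ strictly increasing and $\bigcup_k(G\cap E_{\ell_k})=G$ (possible as $G$ is infinite and $G\subseteq\bigcup_\ell E_\ell$), and set $S''(k)=\{G\cap E_{\ell_k}\}$. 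Then $S''$ is an infinite family, $(F,I,S'')$ is a condition ($F<G\cap E_{\ell_0}$ and each $G\cap E_{\ell_k}\subseteq E_{\ell_k}\subseteq I$), and $S''\le S$ since $G\cap E_{\ell_k}\subseteq E_{\ell_k}\in S(\ell_k)$; so $(F,I,S'')\le(F,I,S)$. Its unique infinite path consists of the $x$-safe sets $G\cap E_{\ell_k}$, so for every $E\in S''(n)$ on an infinite path and every transitive $F'\subseteq E$ we have $F\cup F'\notin\mathcal{K}^{X,a,(x,\infty)}$, i.e. $(F,I,S'')$ settles $\mathcal{K}^{X,A^*(c),B^*(c)}$.

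The one genuinely delicate point — the reason this case is not a one-liner and the step I expect to be the main obstacle — is that non-essentiality only provides a \emph{bipartition} of some level-$n$ node with both halves safe, never a single safe node; the König argument on coherent chains of such bipartitions is what converts these local splits into a global one, and the observation that makes it succeed is that although either colour class $P$ or $Q$ may individually fail to be infinite, together they exhaust the infinite set $\bigcup_\ell E_\ell$, so one of them is the infinite $x$-safe subtournament we need. Verifying finite-branchingness of $\mathcal{Z}$, the coherence of the induced restricted partitions, and the routine family-condition bookkeeping for $S''$ are the remaining (straightforward) details.
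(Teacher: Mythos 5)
Your high-level strategy is essentially the paper's: build a finitely branching, $X$-computable tree of coherent ``$x$-safe'' bipartitions ($\mathcal{Z}$ plays exactly the role of the paper's $T_0$, with your restricted partitions $(P_j,Q_j)$ corresponding to the coherently chosen labels $E_\sigma$, $S_\sigma$), use non-essentiality to show this tree has nodes at every level, and then extract from it a family $S''\leq S$ that settles $\mathcal{K}^{X,A^*(c),B^*(c)}$. Your preliminary observations are correct, including that $F\notin\mathcal{K}^{X,a,(x,\infty)}$, that $(x,\ell)$-safety is inherited by subsets and implies $(x,j)$-safety for $j\le\ell$, and that $\mathcal{K}^{X,a,(x,\infty)}=\bigcup_m\mathcal{K}^{X,a,(x,m)}$ by positivity of the dependence on $B$.

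The genuine gap is in the extraction step, and it is not the ``routine bookkeeping'' you suggest at the end. By Definition \ref{def:family}, a family of subtournaments must be a function \emph{computable from $X$}; families are represented by indices, and that index must be part of the forcing condition $(F,I,S'')$. You fix a single infinite path $(E_\ell,P_\ell)$ through $\mathcal{Z}$ by K\"onig's lemma, set $G$ to be whichever of $P=\bigcup P_\ell$, $Q=\bigcup Q_\ell$ is infinite, and then set $S''(k)=\{G\cap E_{\ell_k}\}$. But a specific infinite path through an $X$-computable, finitely branching, infinite tree is in general not $X$-computable, and on top of that, which of $P$ or $Q$ is infinite is a limit question that is again not $X$-decidable; so neither $G$ nor the sequence $(\ell_k)$ is available to $X$, and your $S''$ is not a family in the sense required. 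This is precisely what the paper's two-case analysis is engineered to avoid: rather than pinning down one path, they read $S''$ directly off the $X$-computable tree $T_0$, at each level collecting \emph{all} ``right halves'' sharing a stabilized label $E_0$ below a fixed finite node $\delta$ (Case~1), or \emph{all} ``left halves'' at a level sequence $\ell_k$ chosen so that the labels strictly grow (Case~2). Each such $S''(k)$ is determined by an $X$-decidable condition on $T_0$ plus a finite amount of side data, so $S''$ is genuinely $X$-computable, though possibly multi-branched; the settling condition is then verified only on nodes lying on infinite paths of $S''$, which is exactly why Definition \ref{def:settle} quantifies over ``$E\in S(n)$ on an infinite path through $S$'' rather than all $E\in S(n)$. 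Your safety verification for $G\cap E_{\ell_k}$ is correct, but it establishes a property of one non-computable branch rather than of a legitimate forcing condition.
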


\begin{proof}
Assume $\mathcal{K}^{X}$ is not essential below $(F,I,S)$ and $a_{\mathcal{K}}^X(F)$ converges.  
Fix $x$ such that for all levels $n$ and all $B > x$, there is an $E \in S(n)$ and a partition $E_0 \cup E_1 = E$ such that for all $i \in \{ 0, 1 \}$ and all 
transitive $F' \subseteq E_i$, $F \cup F' \not \in \mathcal{K}^{X, a_{\mathcal{K}}^X(F),B}$.   Because the dependence on $B$ in 
$\mathcal{K}^{X, a_{\mathcal{K}}^X(F),B}$ is positive, we can restrict our attention to sets $B$ of the form $(x,x+v+2)$ for all $v$.  Before 
proceeding, we examine this hypothesis in more detail.  

Let $Q(u,v)$ be the predicate such that for any finite tournament $E$ and any $v$, $Q(E,v)$ holds if and only if 
\[
\exists \text{ partition } E_0 \cup E_1 = E \, \forall i \in \{ 0,1 \} \, \forall \text{ transitive } F' \subseteq E_i \, ( F \cup F' \not \in 
\mathcal{K}^{X,a_{\mathcal{K}}^X(F),(x,x+v+2)} ).
\]
Because  
\[
F \cup F' \not \in \mathcal{K}^{X,a_{\mathcal{K}}^X(F),(x,x+v+2)} \Leftrightarrow \forall b \, 
\big( x < b < x+v+2 \rightarrow \neg R_{\mathcal{K}}^X(F \cup F', a_{\mathcal{K}}^X(F),b) \big)
\]
the predicate $Q(E,v)$ is $X$-computable.  We write $Q(E,\infty)$ for the same predicate with the inequality $x < b < x+v+2$ replaced by $x < b$ and 
note that $Q(E,\infty)$ is a $\Pi^{0,X}_1$ predicate.  

We can restate our hypothesis in terms of $Q$.  For every level $n$ and for every $v$, 
there exists $E \in S(n)$ such that $Q(E,v)$.  Since each level $S(n)$ is finite, we have that for every level $n$ there exists $E \in S(n)$ such that 
$Q(E,\infty)$.

Suppose $E \in S(n+1)$ is a successor of $\tilde{E} \in S(n)$.  If $Q(E,v)$ holds (allowing the possibility that $v = \infty$), then $Q(\tilde{E},v)$ holds as 
well because the witnessing partition $E_0 \cup E_1 = E$ restricts to a witnessing partition $\tilde{E}_0 \cup \tilde{E}_1 = \tilde{E}$ for 
$Q(\tilde{E},v)$.  Therefore, we can define two subtrees, $S'$ and $\tilde{S}$, of $S$ as follows:
\begin{eqnarray*}
S'(n) & = & \{ E \in S(n) \mid Q(E,n) \text{ holds} \} \\
\tilde{S}(n) & = & \{ E \in S(n) \mid Q(E,\infty) \text{ holds} \}.
\end{eqnarray*}
Because the relation $Q(E,n)$ is $X$-computable, $S' \leq S$ is an infinite family of subtournaments.  $\tilde{S}$ satisfies all the requirements for 
being an infinite subtournament except it is not $X$-computable.  However, if $E \in S'(n)$ is on an infinite path through $S'$ then 
$E \in \tilde{S}(n)$.  

To define $S''$, we will partition $S'$.  
We consider each $E \in S'(n)$ and look at all partitions $E_0 \cup E_1 = E$ such that for all $i \in \{ 0 ,1 \}$ and all transitive 
$F' \subseteq E_i$, $F \cup F' \not \in \mathcal{K}^{X,a_{\mathcal{K}}^X(F),(x,x+n+2)}$.  We will form a tree $T_0$ of all ``left halves'' of such splittings 
(and hence implicitly also a tree of all ``right halves'' of such splittings) where we choose the ``left halves'' in a coherent manner.  Then, we 
show how to define an appropriate infinite family $S'' \leq S$ from $T_0$ which settles $\mathcal{K}^{X,A^*(c),B^*(c)}$.  

Formally, we proceed as follows.  Let $R(u,v,w,z)$ be the $X$-computable predicate such that $R(E_0,E_1,E,n)$ holds if and only if 
$E \in S'(n)$ and $E_0 \cup E_1 = E$ is a partition such that for all $i \in \{ 0,1 \}$ and all transitive $F' \subseteq E_i$, 
$F \cup F' \not \in \mathcal{K}^{X,a_{\mathcal{K}},(x,x+n+2)}$.  Notice that $R$ is symmetric in the $E_0$ and $E_1$ variables and that for all 
$E \in S(n)$, $Q(E,n)$ holds if and only if there are $E_0$ and $E_1$ such that $R(E_0,E_1,E,n)$ holds.
We define the tree $T_0$ inductively, starting with placing a root in $T_0$.  

To define the nodes at level $1$ in $T_0$, consider each $E \in S'(0)$ in turn.  Find the set of all (unordered) partitions $\{ E_0, E_1 \}$ 
such that $R(E_0,E_1,E,0)$ holds (and hence also $R(E_1,E_0,E,0)$ holds).  For each such $\{ E_0, E_1 \}$, add a node $\sigma$ at level $1$ to $T_0$ 
and label this node by an arbitrarily chosen element of $\{ E_0, E_1 \}$, suppose it is $E_0$.  We indicate this labeling by 
$E_{\sigma} = E_0$.  Also, include a second label for $\sigma$ indicating the set $E \in S'(0)$ which has been split.  We 
write $S_{\sigma} = E$ to indicate this information.   

To define the nodes at level $n+1$ (for $n > 0$), consider each $E \in S'(n)$ in turn.  Fix $E \in S'(n)$ and let $E' \in S'(n-1)$ be the 
predecessor of $E$ in the tree $S'$.  Find the set of all (unordered) partitions  
$\{ E_0, E_1 \}$ such that $R(E_0,E_1,E,n)$ holds.  Consider each of these sets in turn.  

The partition $E_0 \cup E_1 = E$ restricts to a partition $E'_0 \cup E'_1 = E'$ such that $R(E'_0,E'_1,E',n-1)$ holds.  By induction, there is a node 
$\delta$ at level $n$ of $T_0$ such that $S_{\delta} = E'$ and either $E_{\delta} = E_0'$ or $E_{\delta} = E_1'$.  
Add a node $\sigma$ to $T_0$ as a successor of $\delta$.  Set $S_{\sigma} = E$ and $E_{\sigma} = E_i$ 
where $i$ is chosen such that $E_{\delta} = E_i'$.    

This completes the description of $T_0$.  Notice that $T_0$ is an infinite finitely branching tree. 
For any nodes $\delta$ and $\sigma$ such that $\sigma$ is a 
successor of $\delta$, we have $E_{\delta} \subseteq E_{\sigma}$ and $S_{\delta} \subseteq S_{\sigma}$ is 
the predecessor of $S_{\sigma}$ in $S'$.  It follows easily by induction that if $\tau$ is an extension of $\sigma$ on $T_0$ with 
$E_{\sigma} = E_0$, $E_0 \cup E_1 = S_{\sigma}$, $E_{\tau} = E_0'$ and $E_0' \cup E_1' = S_{\tau}$, then 
$E_0 \subseteq E_0'$ and $E_1 \subseteq E_1'$.  

Furthermore, we claim that if $E_0 = E_{\sigma}$, $E_1 \cup E_0 = S_{\sigma}$ and 
$\sigma$ is on an infinite path through $T_0$, then for each $i \in \{ 0 ,1 \}$ and all transitive $F' \subseteq E_i$, 
$F \cup F' \not \in \mathcal{K}^{X,a_{\mathcal{K}}^X(F),(x,\infty)}$.  (Below, we refer to this claim as the \textit{main claim}.) 
To prove this claim, suppose for a contradiction that there is a 
transitive $\tilde{F} \subseteq E_i$ such that $F \cup \tilde{F} \in \mathcal{K}^{X,a_{\mathcal{K}}^X(F),(x,\infty)}$.  Fix $k$ such that 
$F \cup \tilde{F} \in \mathcal{K}^{X,a_{\mathcal{K}}^X(F),(x,x+k+2)}$ and let $\tau \in T_0$ be an extension of $\sigma$ at level $k$.  
Let $E_0' = E_{\tau}$ and let $E_1' \cup E_0' = S_{\tau}$.  Because $\tau$ is on $T_0$, for every transitive $F' \subseteq E_i'$, 
$F \cup F' \not \in \mathcal{K}^{X,a_{\mathcal{K}}^X(F),(x,x+k+2)}$.  However, because $\tau$ extends $\sigma$, we have 
$E_0 \subseteq E_0'$ and $E_1 \subseteq E_1'$.  Therefore, $\tilde{F} \subseteq E_i \subseteq E_i'$ is transitive and 
$F \cup \tilde{F} \in \mathcal{K}^{X,a_{\mathcal{K}}^X(F),(x,x+k+2)}$, giving the desired contradiction.

It remains to extract the infinite family $S'' \leq S$ which settles $\mathcal{K}^{X,A^*(c),B^*(c)}$.  
This extraction breaks into two cases.  

For the first case, assume that there is a level $n$ and an $E_0$ such that for every level $m \geq n$, there is a node $\sigma \in T_0$ 
at level $m$ such that $E_{\sigma} = E_0$.  Fix $n$ and $E_0$.  Because $T_0$ is finitely branching, there must be a node $\delta \in T_0$ at level $n$ 
such that for all $m \geq n$, there is an extension $\sigma$ of $\delta$ at level $m$ such that $E_{\sigma} = E_0$.  Fix such a $\delta$.  
Define $S''$ to be the family such that $S''(k)$ is the set of all $E_1$ such that there is a node $\sigma \in T_0$ extending $\delta$ at level 
$n+k$ such that $E_{\sigma} = E_0$ and $S_{\sigma} = E_0 \cup E_1$.  That is, $S''(k)$ is the set of all ``right halves" of the splits of 
elements of $S'(n+k)$ for which $E_0$ was the ``left half".  

We claim that $S''$ is the desired family.  We check the required properties.  (Note that Property (A1) in Definition \ref{def:family} is immediate.)
\begin{itemize}

\item $S'' \leq S$:  Fix $E_1 \in S''(k)$.  Then, $E_0 \cup E_1 \in S'(n+k)$ and hence $E_0 \cup E_1 \in S(n+k)$.  Therefore, 
$E_1 \subseteq E$ for some $E \in S(n+k)$.

\item $S''$ is infinite:  Fix $k$ and we show that $S''(k)$ is nonempty.  By assumption, there is a node $\sigma \in T_0$ extending $\delta$ at 
level $n+k$ with $E_{\sigma} = E_0$.  Therefore, $S_{\sigma} \setminus E_0 \in S''(k)$.

\item  Property (A2) holds:  Let $E_1 \in S''(k+1)$ and fix $\sigma \in T_0$ extending $\delta$ at level $n+k+1$ be such that 
$E_0 \cup E_1 = S_{\sigma}$.  Let $\tau$ be the predecessor of $\sigma$ on $T_0$.  Since $\delta \subseteq \tau \subseteq \sigma$ and 
$E_{\delta} = E_{\sigma} = E_0$, we have $E_{\tau} = E_0$.   Therefore, $E_1' = S_{\tau} \setminus E_0 \in S''(k)$.  By construction, 
$S_{\tau} = E_0 \cup E_1'$ is the predecessor of $S_{\sigma} = E_0 \cup E_1$ in $S'$ and in $S$.  Therefore, $E_1' \subseteq E_1$ and the elements of 
$E_1 \setminus E_1' = (E_0 \cup E_1) \setminus (E_1' \cup E_0)$ are greater than $\text{max}(S(n+k))$ and hence greater than $\text{max}(S''(k))$.  

\item $(F,I,S'')$ settles $\mathcal{K}^{X,A^*(c),B^*(c)}$:  Fix $E_1 \in S''(k)$ such that $E_1$ is on an infinite path through $S''$.  Let 
$\sigma \in T_0$ be the node witnessing that $E_1 \in S''(k)$.  Then $\sigma$ is on an infinite path through $T_0$ and 
$E_0 \cup E_1 = S_{\sigma}$.  By the main claim, for all transitive 
$F' \subseteq E_1$, $F \cup F' \not \in \mathcal{K}^{X, a_{\mathcal{K}}^X(F),(x,\infty)}$.
\end{itemize}

For the second case, assume that for any level $n$ and any $E_0 = E_{\sigma}$ for some $\sigma \in T_0$ at level $n$, there is a level $m > n$ such that 
$E_{\tau} \neq E_0$ for all $\tau \in T_0$ at level $m$.  Define $S''(k)$ inductively as follows.  $S''(0)$ is the set of all $E_0$ such that 
$E_0 = E_{\sigma}$ for some $\sigma \in T_0$ at level $0$.  Assume that $S''(k)$ has been defined as the set of all $E_0$ such that $E_0 = E_{\sigma}$ 
for some $\sigma \in T_0$ at level $\ell_k$.  Let $\ell_{k+1}$ be the first level in $T_0$ such that for every $E_0 \in S''(k)$ and every $\tau \in T_0$ at 
level $\ell_{k+1}$, $E_{\tau} \neq E_0$.  By our case assumption and the fact that $T_0$ is finitely branching, $\ell_{k+1}$ is defined.  Let 
$S''(k+1)$ be the set of all $E_0$ such that $E_0 = E_{\tau}$ for some $\tau \in T_0$ at level $\ell_{k+1}$.  

We claim that $S''$ is the desired family.  We check the required properties.  (Again, (A1) holds trivially.)
\begin{itemize}

\item $S'' \leq S$:  If $E_0 \in S''(k)$, then $E_0 = E_{\sigma}$ for some $\sigma \in T_0$ at level $\ell_k$.  Let $E_1 = S_{\sigma} \setminus E_0$.  Then, 
$E_0 \cup E_1 \in S'(\ell_k)$ and $E_0 \cup E_1 \in S(\ell_k)$.  Therefore, $E_0 \subseteq E$ for some $E \in S(\ell_k)$.

\item $S''$ is infinite:  By our case assumption, $\ell_k$ is defined for all $k$.  Since $T_0$ is infinite, $S''(k) \neq \emptyset$ for all $k$.

\item Property (A2): Let $E_0 \in S''(k+1)$ and fix $\sigma \in T_0$ at level $\ell_{k+1}$ such that $E_{\sigma} = E_0$.  Let $\tau \in T_0$ at 
level $\ell_k$ be such that $\tau \subseteq \sigma$.  Then $E_{\tau} \in S''(k)$ and $E_{\tau} \subsetneq E_{\sigma}$.  The elements in 
$E_{\sigma} \setminus E_{\tau}$ are all greater than the elements in $S(\ell_k)$ and hence are all greater than the elements in $S''(k)$.  

\item $(F,I,S'')$ settles $\mathcal{K}^{X,A^*(c),B^*(c)}$:  Fix $E_0 \in S''(k)$ such that $E_0$ is on an infinite path through $S''$.  Let $\sigma \in T_0$ 
at level $\ell_k$ be such that $E_0 = E_{\sigma}$.  Because $\sigma$ is on an infinite path through $T_0$, it follows by the main claim 
that all transitive $F' \subseteq E_0$ satisfy $F \cup F' \not \in \mathcal{K}^{X,a_{\mathcal{K}}^X(F),(x,\infty)}$.  
\end{itemize}
\end{proof}

\subsection{Ground forcing}

We now carry out the ground level forcing to produce the coloring $c$.  Our forcing conditions are triples $(c,A^*,B^*)$ where $c$ is coloring of two 
element subsets of a finite domain $[0,|c|]$, $A^*$ and $B^*$ are subsets of $[0,|c|]$ and $A^*\cap B^*=\emptyset$.  
We say $(c,A^*,B^*) \leq (c_0,A^*_0,B^*_0)$ if:
\begin{itemize}
  \item $c_0\subseteq c$,
  \item $A^*_0\subseteq A^*$,
  \item $B^*_0\subseteq B^*$,
  \item Whenever $a\in A^*_0$ and $x>|c_0|$, $c(a,x)=0$,
  \item Whenever $b\in B^*_0$ and $x>|c_0|$, $c(b,x)=1$.
\end{itemize}

Clearly the set of $(c,A^*,B^*)$ such that $i\in A^*\cup B^*$ is dense, so we may ensure that the coloring given by a generic is stable.  We 
need to ensure that our generic coloring does not compute a solution to itself.  We say 
$(c,A^*,B^*) \Vdash (\Phi_e^G$ is finite) if 
\[
\exists k \, \forall (c_0,A_0^*,B_0^*) \leq (c,A^*,B^*) \, \forall x \, (\Phi_e^{c_0}(x) =  1 \rightarrow x \leq k).
\]
We say $(c,A^*,B^*) \Vdash ( \Phi_e^G \not \subseteq A^*(G) \wedge \Phi_e^G \not \subseteq B^*(G))$ if 
\[
\exists a \in A^* \, \exists b \in B^* \, (\Phi_e^c(a) = \Phi_e^c(b) = 1).
\]

\begin{lem}
For each index $e$, the set of conditions which either force $\Phi_e^G$ is finite or force $\Phi_e^G \not \subseteq A^*(G) \wedge \Phi_e^G \not \subseteq 
B^*(G)$ is dense.  
\end{lem}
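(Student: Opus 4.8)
The plan is to follow the corresponding density lemma from the ground forcing for Theorem~\ref{thm:ADS}, which is in fact slightly simpler in the present setting: a condition $(c,A^*,B^*)$ imposes no closure requirement on $A^*$ or $B^*$, only $A^*\cap B^*=\emptyset$, so a sufficiently large fresh number may be placed freely into either $A^*$ or $B^*$. Fix $e$ and a condition $(c,A^*,B^*)$. If some extension of $(c,A^*,B^*)$ forces $\Phi_e^G$ is finite, that extension already lies in the set in question, so we are done. Otherwise, no extension of $(c,A^*,B^*)$ forces $\Phi_e^G$ is finite, which by the definition of this forcing relation means that for every $k$ and every extension there is a further extension producing a convergent computation $\Phi_e^{c_0}(y)=1$ with $y>k$.

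First I would pick $y>|c|$ together with an extension $(c_0,A^*_0,B^*_0)\leq (c,A^*,B^*)$ such that $\Phi_e^{c_0}(y)=1$; by shrinking $A^*_0,B^*_0$ we may assume $A^*_0=A^*$ and $B^*_0=B^*$, so by the choice of $y$ (and using $y\leq|c_0|$ by the use convention) we have $y\notin A^*\cup B^*$. Set $c_1=c_0$, $A^*_1=A^*\cup\{y\}$, $B^*_1=B^*$. This is a condition, since $A^*_1\cap B^*_1=\emptyset$ (because $y\notin B^*$ and $A^*\cap B^*=\emptyset$), and it is an extension of $(c,A^*,B^*)$: the two coloring clauses in the definition of extension are inherited from $(c_0,A^*_0,B^*_0)\leq(c,A^*,B^*)$, and the new element $y$ imposes no obstruction. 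Since $(c_1,A^*_1,B^*_1)$ is an extension of $(c,A^*,B^*)$, it likewise cannot force $\Phi_e^G$ is finite, so I repeat the move: pick $z>|c_1|$ and an extension $(c_2,A^*_2,B^*_2)\leq(c_1,A^*_1,B^*_1)$ with $\Phi_e^{c_2}(z)=1$, shrink to $A^*_2=A^*_1$, $B^*_2=B^*_1$ so that $z\notin A^*_2\cup B^*_2$, and put $c_3=c_2$, $A^*_3=A^*_2$, $B^*_3=B^*_2\cup\{z\}$. Note $z>|c_1|\geq y$, hence $y\neq z$, and $z\notin A^*\cup\{y\}$, so $A^*_3\cap B^*_3=\emptyset$ and $(c_3,A^*_3,B^*_3)\leq(c,A^*,B^*)$.

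Finally I would check that $(c_3,A^*_3,B^*_3)\Vdash\Phi_e^G\not\subseteq A^*(G)\wedge\Phi_e^G\not\subseteq B^*(G)$: we have $y\in A^*_3$ and $z\in B^*_3$, and by monotonicity of the oracle computations along $c_0\subseteq c_1\subseteq c_2=c_3$ both $\Phi_e^{c_3}(y)=1$ and $\Phi_e^{c_3}(z)=1$, so $y$ and $z$ witness the defining existential of the forcing relation. I do not expect a genuine obstacle here; the only steps calling for care are verifying that each $(c_i,A^*_i,B^*_i)$ is an extension of $(c,A^*,B^*)$ and not merely of the previous condition — which holds because the coloring part only grows and the freshly added members of $A^*$ and $B^*$ exceed $|c|$ — and confirming that adjoining a single number to $A^*$ or to $B^*$ preserves disjointness, for which the relevant facts are that each such number is chosen outside $A^*\cup B^*$ and that the two numbers $y,z$ are distinct and are placed on opposite sides.
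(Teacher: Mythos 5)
Your proof is correct and follows essentially the same route as the paper: if no extension forces $\Phi_e^G$ finite, extract a fresh $y>|c|$ with $\Phi_e^{c_0}(y)=1$, place it in $A^*$, then repeat to find $z>|c_1|$ and place it in $B^*$. The only difference is that you are slightly more explicit about why the "no extension forces finite" hypothesis persists after the intermediate step and about verifying disjointness, which the paper leaves implicit.
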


\begin{proof}
Fix an index $e$ and a condition $(c,A^*,B^*)$.  If some extension of $(c,A^*,B^*)$ forces $\Phi_e^G$ is finite, then we are done.  Otherwise, since 
$(c,A^*,B^*)$ does not force $\Phi_e^G$ is finite, there is an $x > |c|$ and a condition $(c_0,A^*,B^*)$ extending $(c,A^*,B^*)$ such that 
$\Phi_e^{c_0}(x) = 1$.  (Without loss of generality, only the coloring changes.)  Since $(c_0, A^* \cup \{ x \}, B^*) 
\leq (c,A^*,B^*)$,  $(c_0, A^* \cup \{ x \}, B^*)$ does not force $\Phi_e^G$ is finite.  So there is a $y > |c_0|$ and a condition $(c_1, A^* \cup \{ x \}, B^*)$ 
extending $(c_0, A^* \cup \{ x \}, B^*)$ such that $\Phi_e^{c_1}(y) = 1$.   The condition $(c_1, A^* \cup \{ x \}, B^* \cup \{ y \})$ extends 
$(c,A^*,B^*)$ and forces 
$\Phi_e^G \not \subseteq A^*(G) \wedge \Phi_e^G \not \subseteq B^*(G)$.
\end{proof}

Finally, we need to force the requirements $\mathcal{K}^{G,A^*(G),B^*(G)}$ for any generic $G$ to be uniformly dense in $\mathbb{Q}_e^G$.  Fix an 
index $e$ and a potential iterated forcing condition $p = (F_p,I_p,S_p)$, where $F_p$ is a finite set, $I_p$ is a pair of elements in $F_p$ and $S_p$ is 
the index for a potential family of subtournaments of $T_e^G$.  We define the following forcing notions.
\begin{itemize}
\item $(c,A^*,B^*) \Vdash (F_p$ is a transitive subtournament of $T_e^G)$ if 
for every $u,v \in F_p$, $\Phi_e^c(u,v)$ converges and the induced structure on $F_p$ makes it a transitive tournament.  

\item $(c,A^*,B^*) \Vdash (F_p$ is a not transitive subtournament of $T_e^G)$ if there is no extension $(c_0,A_0^*, B_0^*) \leq (c,A^*,B^*)$ forcing 
$F_p$ is a transitive subtournament of $T_e^G$.  

\item $(c,A^*,B^*) \Vdash (S_q^G$ is not total) if there is an $\ell$ such that for every $(c_0,A_0^*,B_0^*) \leq (c,A^*,B^*)$, $S_q^{c_0}(\ell)$ diverges.

\item $(c,A^*,B^*) \Vdash (p \not \in \mathbb{Q}_e^G)$ if any of the following conditions hold
\begin{itemize}
\item $(c,A^*,B^*) \Vdash (F_p$ is not a transitive subtournament of $T_e^G)$; or  
\item $(c,A^*,B^*) \Vdash (F_p$ is a transitive subtournament of $T_e^G)$ but $I_p$ is not a minimal interval in $F_p$; or
\item $(c,A^*,B^*) \Vdash (S_q^G$ is not total); or 
\item $S_q^c(0) = \{ \tilde{F}_0, \ldots, \tilde{F}_k \}$ and $F \not < \tilde{F}_j$ for some $j$; or 
\item there is an $n$ such that $S_q^c(n)$ and $S_q^c(n+1)$ both converge, but they violate the (A2) condition.
\end{itemize}

\item $(c,A^*,B^*) \Vdash (\mathcal{K}^{G}$ is not essential below $p)$ if for every completion $(\tilde{c},A^*(\tilde{c}),B^*(\tilde{c}))$ of 
$(c,A^*,B^*)$ to a stable $2$-coloring of $\omega$ for which $\mathcal{K}^{\tilde{c},A^*(\tilde{c}),B^*(\tilde{c})}$ is a requirement and 
$p \in \mathbb{Q}_e^{\tilde{c}}$, $\mathcal{K}^{\tilde{c}}$ is not essential below $p$. 

\end{itemize}

\begin{lem}
Let $\mathcal{K}^{G,A^*(G),B^*(G)}$ be a potential requirement given by the indices $i$ and $i'$.  Then for any potential iterated forcing condition 
$p$, there is a dense set of conditions $(c,A^*,B^*)$ such that:
\begin{itemize}
\item $(c,A^*,B^*)\Vdash p\not\in\mathbb{Q}^G_e$; or
\item $(c,A^*,B^*)\Vdash\mathcal{K}^{G}$ is not essential below $p$; or
\item there is a level $n$ such that $S_p^c(n)$ converges and whenever $E \in S_p^c(n)$ and $E = E_0\cup E_1$ is a partition, there is an $i\in\{0,1\}$ and a 
transitive $F' \subseteq E_i$ such that 
\[
\exists a \in A^* \, \exists b \in B^* \, ( \Phi_i^c(F_p \cup F',a,b) = 1 ).
\]
\end{itemize}
\end{lem}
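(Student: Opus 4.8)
The plan is to argue by a dichotomy that mirrors the corresponding ground-forcing density lemma in the proof of Theorem~\ref{thm:ADS}, the two new features being that membership of $p$ in $\mathbb{Q}_e^G$ is an infinitary condition and that a requirement carries a ``locked-in'' first witness $a_{\mathcal{K}}^X$. Fix a condition $(c,A^*,B^*)$ and a potential iterated forcing condition $p=(F_p,I_p,S_p)$. The key question to ask is whether there is an extension $(c',A',B')\leq(c,A^*,B^*)$ and a level $n$ with $S_p^{c'}(n)\!\downarrow$ such that for every $E\in S_p^{c'}(n)$ and every partition $E=E_0\cup E_1$ there are $\ell\in\{0,1\}$ and a transitive $F'\subseteq E_\ell$ with $F_p\cup F'\in\mathcal{K}^{c',A',B'}$. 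If the answer is yes, this very extension $(c',A',B')$ already lies in the third set described in the lemma, and we are done. (If along the way some extension forces $p\notin\mathbb{Q}_e^G$ we land in the first set, but this eventuality is subsumed by what follows, so we may ignore it.)

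If the answer is no, I claim that $(c,A^*,B^*)$ already forces that $\mathcal{K}^{G}$ is not essential below $p$, putting it in the second set. Let $\tilde c$ be any completion of $(c,A^*,B^*)$ to a stable $2$-coloring of $\omega$ for which $\mathcal{K}^{\tilde c,A^*(\tilde c),B^*(\tilde c)}$ is a requirement and $p\in\mathbb{Q}_e^{\tilde c}$ (if there is no such completion the claim is vacuous), and suppose toward a contradiction that $\mathcal{K}^{\tilde c}$ is essential below $p$. Then $a_0:=a_{\mathcal{K}}^{\tilde c}(F_p)$ converges; choose $x\geq\max(A^*\cup B^*\cup\{a_0\})$ and apply Definition~\ref{def:EMessential} to get a finite set $B>x$ and a level $n$ so that for every $E\in S_p^{\tilde c}(n)$ and every partition $E=E_0\cup E_1$ there are $\ell$ and a transitive $F'\subseteq E_\ell$ with $F_p\cup F'\in\mathcal{K}^{\tilde c,\{a_0\},B}$, i.e.\ $R_{\mathcal{K}}^{\tilde c}(F_p\cup F',a_0,b)$ for some $b\in B$. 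Now take $c'$ to be an initial segment of $\tilde c$ long enough that $c'\supseteq c$, that $a_{\mathcal{K}}^{c'}(F_p)=a_0$, that $S_p^{c'}(m)=S_p^{\tilde c}(m)$ for all $m\leq n$, and that $c'$ decides $R_{\mathcal{K}}^{c'}(F_p\cup F',a_0,b)$ for all the finitely many relevant pairs $(F',b)$; and place the locked-in witness $a_0$ and the finite set $B$ on opposite sides of the generic stable set, setting $(A',B')=(A^*\cup\{a_0\},B^*\cup B)$ when $a_0\notin B^*$ and $(A',B')=(A^*\cup B,B^*)$ when $a_0\in B^*$ (here the symmetry of $R_{\mathcal{K}}^X$ in its last two arguments is used, and the choice of $x$ guarantees $A'\cap B'=\emptyset$ and $B\cap(A^*\cup B^*\cup\{a_0\})=\emptyset$). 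Because $\tilde c$ completes $(c,A^*,B^*)$, one checks $(c',A',B')\leq(c,A^*,B^*)$; and by positivity of the dependence on the finite parameters $\mathcal{K}^{c',\{a_0\},B}\subseteq\mathcal{K}^{c',A',B'}$, so $(c',A',B')$ together with level $n$ is exactly a ``yes'' answer to the key question --- a contradiction. Hence no admissible completion $\tilde c$ makes $\mathcal{K}^{\tilde c}$ essential below $p$, which is precisely what it means for $(c,A^*,B^*)$ to force that $\mathcal{K}^{G}$ is not essential below $p$.

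The main obstacle is the ``no'' case: one has to see that a single finite forcing condition can force non-essentiality of $\mathcal{K}^{G}$ below $p$ \emph{uniformly over all completions} $\tilde c$. This works because essentiality below $p$, although it quantifies over the infinite family $S_p^{\tilde c}$, is always witnessed at one finite level $n$ together with one finite second-witness set $B$, so the entire witness localizes to a sufficiently long initial segment of $\tilde c$; the only genuinely fiddly part is the bookkeeping of which side of the partition of the $A^*/B^*$ data the locked-in witness $a_0$ and the set $B$ are assigned to, which is dispatched by the symmetry of $R_{\mathcal{K}}^X$ and by choosing $x$ large. The remaining verifications --- that $(c',A',B')$ is a legitimate condition extending $(c,A^*,B^*)$, and that lengthening $c'$ a bit more can always be arranged so that $a_0$ and $\max B$ are at most $|c'|$ --- are routine from the definitions of the ground forcing.
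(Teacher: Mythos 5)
Your proof is correct and follows essentially the same dichotomy argument as the paper: ask whether some extension of $(c,A^*,B^*)$ realizes the desired splitting behaviour for $\mathcal{K}$; if not, argue that $(c,A^*,B^*)$ itself forces non-essentiality by noting that any completion $\tilde c$ making $\mathcal{K}^{\tilde c}$ essential below $p$ would localize to a finite initial segment witnessing an affirmative answer. The only (cosmetic) differences from the paper's write-up are that you phrase the key question directly in terms of the third bullet rather than in terms of a chosen $a_{\mathcal{K}}^{c'}(F_p)$ and finite $B$, which pushes the bookkeeping (placing $a_0$ and $B$ into $A'$ and $B'$, using the symmetry of $R_{\mathcal{K}}$ and disjointness via the choice of $x$) into the contradiction in the ``no'' branch rather than into the ``yes'' branch, and that you observe---correctly---that the first disjunct of the lemma is never actually invoked by this argument.
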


\begin{proof}
Fix a condition $(c,A^*,B^*)$ and a potential iterated forcing condition $p=(F_p,I_p,S_p)$.  If there is any $(c',A',B')\leq(c,A^*,B^*)$ forcing that 
$p\not\in\mathbb{Q}^c_e$ then we are done, so assume not.

Suppose there is an extension $(c',A^*,B^*)\leq (c,A^*,B^*)$ such that $a^{c'}_{\mathcal{K}}(F_p)$ converges, 
a finite set $B> \max(A^* \cup B^* \cup \{ a^{c'}_{\mathcal{K}}(F_p) \})$ and an $n$ such that 
$S_p^{c'}(n)$ converges and whenever $E \in S_p^{c'}(n)$ and 
$E = E_0\cup E_1$ is a partition, there is an $i\in\{0,1\}$ and a transitive $F' \subseteq E_i$ such that 
\[
\exists b \in B \, (\Phi_i^{c'}(F_p \cup F',a^{c'}_{\mathcal{K}}(F_p),b) = 1)
\]
i.e.~$F_p \cup F' \in\mathcal{K}^{c',a^c_{\mathcal{K}}(F),B}$.  Without loss of generality, we may assume $a^{c'}_{\mathcal{K}}(F_p) \in A^*$ or 
$a^{c'}_{\mathcal{K}}(F_p) \in B^*$.  For simplicity, we assume 
$a^{c'}_{\mathcal{K}}(F_p) \in A^*$ as the other case is symmetric.  Then $(c',A^*,B^*\cup B)$ is the desired condition.

Suppose there is no such $(c',A^*,B^*)$.  Then we claim that $(c,A^*,B^*)$ already forces that $\mathcal{K}^{G}$ is not essential below $p$.   
Let $\tilde c$ be any completion of $c$ to a stable $2$-coloring on $\omega$, and suppose $\mathcal{K}^{c}$ were essential below $p$.  
Then there would be some $B>\max(A^*\cup B^*\cup\{a^c_{\mathcal{K}}(F_p)\})$ and an $n$ such that $S^c_p(n)$ converges and whenever $E\in S^c_p(n)$, every partition is as described above.  In particular, there would be some finite initial segment of $\tilde c$ witnessing the necessary computations, contradicting our assumption.
\end{proof}

The ideal to prove Theorem \ref{thm:EM} is constructed in the same way as the ideal to prove Theorem \ref{thm:ADS}.

\section{Conclusion}

We end with a few questions for future investigation.  \cite{Hir07} introduced two other principles we have not discussed:

\medskip
($\system{CRT}^2_2$) Cohesive Ramsey’s Theorem for pairs: Every $2$-coloring $c$ of $[\mathbb{N}]^2$ has an infinite set $S$ such that $\forall x\in S\exists y\forall z\in S[z>y\rightarrow c(x,y)=c(x,z)]$

\medskip
($\system{CADS}$) Cohesive $\ADS$: Every linear order has either an infinite subset which has order-type $\omega$, $\omega^*$, or $\omega+\omega^*$.

\medskip
They show that $\COH$ implies $\system{CRT}^2_2$ which in turn implies $\system{CADS}$.  Both reversals remain open:
\begin{question}
  Does $\COH$ imply $\system{CRT}^2_2$?  Does $\system{CADS}$ imply $\system{CRT}^2_2$?
\end{question}

While \cite{Cho:TA} recently showed that $\SRT$ does not imply $\RT$ using a nonstandard model, the following remains open:
\begin{question}
  Does $\SRT$ imply $\RT$ in $\omega$-models?
\end{question}
Their method requires constructing the entire Turing ideal satisfying $\SRT$ simultaneously in a way similar to the method used in this paper.  There is an essential difference, however: in the construction in \cite{Cho:TA}, the instance of $\RT$ and all solutions are low.  Since the low sets are all known in advance, they can construct an instance of $\RT$ by diagonalizing against all low sets, and then separately construct a Turing ideal satisfying $\SRT$ and consisting only of low sets.

In this paper, on the other hand, we constructed a Turing ideal by staying inside a collection of sets which could only be identified after a particular instance of $\SCAC$ or $\SRT$ had been chosen.  This required that the diagonalization step anticipate what may happen in the construction of the Turing ideal (by forcing certain sets to be generic in a future forcing notion).  This is unusual---most similar arguments in reverse mathematics carry out a diagonalization step against a collection of sets which have an a priori definition.  If this is necessary, it indicates a difference in kind between the non-implications in this paper and others in the literature.  To make this precise, we need to decide what we mean by an a priori collection of sets.  One way is the following:
\begin{question}
  Is there a set of Turing degrees $\mathcal{S}$ definable without parameters (in first-order logic) from $\leq$ and jump, together with an instance $(M,\preceq_M)$ of $\SCAC$ belonging to $\mathcal{S}$ such that:
  \begin{itemize}
  \item No element of $\mathcal{S}$ is a solution to $(M,\preceq_M)$, and
  \item There is a Turing ideal $\mathcal{I}\subseteq\mathcal{S}$ containing $(M,\preceq_M)$ and satisfying $\ADS$?
  \end{itemize}
\end{question}
All other known non-implications between combinatorial principles weaker than or similar to $\RT$ have positive solutions to the corresponding variant of this question.

\end{document}